\tikzset{snake it/.style={decorate, decoration=snake}}
\theoremstyle{plain}
\newtheorem{thm}{Theorem}[section]
\newtheorem{cor}[thm]{Corollary}
\newtheorem{lem}[thm]{Lemma}
\newtheorem{prop}[thm]{Proposition}
\newtheorem{conj}[thm]{Conjecture}
\theoremstyle{definition}
\theoremstyle{remark}
\newtheorem{rmk}[thm]{Remark}
\newcommand{\BC}{{\mathbb{C}}}
\newcommand{\BQ}{{\mathbb{Q}}}
\newcommand{\BZ}{{\mathbb{Z}}}
\newcommand{\CA}{{\mathcal A}}
\newcommand{\CE}{{\mathcal E}}
\newcommand{\CF}{{\mathcal F}}
\newcommand{\CH}{{\mathcal H}}
\newcommand{\CI}{{\mathcal I}}
\newcommand{\CL}{{\mathcal L}}
\newcommand{\CM}{{\mathcal M}}
\newcommand{\CR}{{\mathcal R}}
\newcommand{\CS}{{\mathcal S}}
\newcommand{\CT}{{\mathcal T}}
\newcommand{\CU}{{\mathcal U}}
\newcommand{\CX}{{\mathcal X}}
\newcommand{\Fp}{{\mathfrak{p}}}
\newcommand{\FF}{{\mathfrak{F}}}
\DeclareFontFamily{OT1}{rsfs}{}
\DeclareFontShape{OT1}{rsfs}{n}{it}{<-> rsfs10}{}
\DeclareMathAlphabet{\curly}{OT1}{rsfs}{n}{it}
\newcommand{\Pic}{\mathop{\rm Pic}\nolimits}
\begin{document}
\title[On the Orlov conjecture]{On the Orlov conjecture for hyper-K\"ahler varieties via hyperholomorphic bundles}
\date{\today}

\author[D. Maulik]{Davesh Maulik}
\address{Massachusetts Institute of Technology}
\email{maulik@mit.edu}

\author[J. Shen]{Junliang Shen}
\address{Yale University}
\email{junliang.shen@yale.edu}

\author[Q. Yin]{Qizheng Yin}
\address{Peking University}
\email{qizheng@math.pku.edu.cn}

\begin{abstract}
We study Fourier transforms induced by Markman's projectively hyperholomorphic bundles on products of hyper-K\"ahler varieties of~$K3^{[n]}$-type. As applications, we prove the following. (a) Derived equivalent hyper-K\"ahler varieties of $K3^{[n]}$-type have isomorphic homological motives preserving the cup-product. (b) All smooth projective moduli spaces of stable sheaves on a given $K3$ surface have isomorphic homological motives preserving the cup-product. (c) Assuming the Franchetta properties for the self-products of polarized $K3$ surfaces, the isomorphisms in (b) can be lifted to Chow motives for $K3$ surfaces of Picard rank $1$. These results provide evidence for the Orlov conjecture and a conjecture of Fu--Vial.

\end{abstract}

\maketitle

\setcounter{tocdepth}{1} 

\tableofcontents
\setcounter{section}{-1}

\section{Introduction}

Throughout, we work over the complex numbers; Chow groups and motives are taken with rational coefficients.

\subsection{Orlov conjecture for hyper-K\"ahler varieties}\label{sec0.1}

For a nonsingular projective variety~$X$, the Orlov conjecture \cite{Orlov} predicts a mysterious connection between the bounded derived category of coherent sheaves $D^b(X)$ and the Chow motive $h(X)$. We say that nonsingular projective varieties $X,X'$ are derived equivalent if there is an isomorphism of bounded derived categories of coherent sheaves,
\[
D^b(X) \simeq D^b(X').
\]

\begin{conj}[Orlov conjecture \cite{Orlov}]\label{conj0}
Let $X,X'$ be nonsingular projective varieties which are derived equivalent. There is an isomorphism of Chow motives
    \[
    h(X) \simeq h(X').
    \]
\end{conj}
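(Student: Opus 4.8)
The natural point of departure is Orlov's representability theorem \cite{Orlov}: any equivalence $D^b(X)\simeq D^b(X')$ is a Fourier--Mukai transform with kernel $\CE\in D^b(X\times X')$, and the inverse equivalence is again Fourier--Mukai, with kernel $\CE^{\vee}$ a twist of the derived dual of $\CE$. The plan is to pass from $\CE$ to its Mukai vector $v(\CE)=\ch(\CE)\sqrt{\mathrm{td}(X\times X')}\in\bigoplus_k \mathrm{CH}^k(X\times X')_{\BQ}$, viewed as a correspondence. By Grothendieck--Riemann--Roch the assignment $\CE\mapsto v(\CE)$ carries convolution of kernels to composition of correspondences and sends $\CO_{\Delta}$ to $[\Delta_X]$; applying this to $\CE^{\vee}\ast\CE\simeq\CO_{\Delta_X}$ and $\CE\ast\CE^{\vee}\simeq\CO_{\Delta_{X'}}$ shows that $v(\CE)$ and $v(\CE^{\vee})$ are mutually inverse correspondences. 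Running the same argument after any base change $-\times T$ --- equivalently, on the level of Chow realizations --- already produces an isomorphism of total Chow groups $\mathrm{CH}^*(X)_{\BQ}\xrightarrow{\ \sim\ }\mathrm{CH}^*(X')_{\BQ}$, functorial in $T$ and compatible with the action of correspondences.

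This is weaker than the conjecture, and the gap is where I expect the real difficulty to lie: $v(\CE)$ is in general \emph{not homogeneous}, i.e.\ it is a sum of correspondences of several codimensions, so it is not a morphism $h(X)\to h(X')$ in the category of Chow motives, where such a morphism is by definition a single degree-zero (codimension-$\dim X$) correspondence. The inhomogeneity is genuine: for an abelian variety $A$ of dimension $g$, the Poincar\'e-bundle kernel identifies $\mathrm{CH}^p_{(s)}(A)$ with $\mathrm{CH}^{g-p+s}_{(s)}(\hat A)$ in Beauville's decomposition, visibly scrambling codimension, and the known isomorphism $h(A)\simeq h(\hat A)$ has to be produced separately, from the self-duality of $h^1(A)$. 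So the content of the conjecture is the upgrade from this \emph{ungraded} isomorphism of Chow groups, which the kernel supplies for free, to a \emph{graded} isomorphism of motives realized by mutually inverse degree-zero correspondences.

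To perform that upgrade, the plan is: fix Chow--K\"unneth decompositions $h(X)=\bigoplus_i h^i(X)$ and $h(X')=\bigoplus_i h^i(X')$; correct the codimension shift built into $v(\CE)$ by the powers of the Lefschetz motive prescribed by the Mukai pairing; and then show that the corrected correspondence matches $h^i(X)$ with $h^i(X')$ \emph{piece by piece}, not merely $\bigoplus_i h^i(X)$ with $\bigoplus_i h^i(X')$. This needs two ingredients available only in special cases. First, the existence of a Chow--K\"unneth decomposition for both $X$ and $X'$ --- part of Murre's conjectures, known for curves, surfaces, abelian varieties, a handful of threefolds, and hyper-K\"ahler varieties of $K3^{[n]}$-type. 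Second, enough geometric grip on the kernel to pin down the graded pieces; in practice one gets this by replacing $v(\CE)$ with a manifestly homogeneous correspondence built by hand --- a universal sheaf on a product of moduli spaces, or, as in the approach of this paper, a projectively hyperholomorphic bundle \`a la Markman on $X\times X'$, whose Mukai vector isolates the transcendental summand and furnishes the required degree-zero idempotents.

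Consequently I do not expect a uniform proof in full generality with present technology: the abelian-variety example shows there is no formal route from the Fourier--Mukai kernel to a degree-homogeneous correspondence, so some self-duality or cancellation must be exploited case by case, and even that presupposes Chow--K\"unneth decompositions which are not known in general. The realistic targets --- and the ones addressed by the methods of this paper --- are $K3$ surfaces (after Huybrechts), abelian varieties, smooth projective moduli of stable sheaves on a $K3$ surface, and hyper-K\"ahler varieties of $K3^{[n]}$-type, where the hyperholomorphic-bundle Fourier transform provides the missing homogeneous correspondence; even there the identification is first obtained for homological motives, and its lift to Chow motives requires additional input of Franchetta type.
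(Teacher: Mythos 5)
This statement is Conjecture~\ref{conj0}, which the paper does not prove (and cannot, with present technology); it is explicitly proposed as an open conjecture and only attacked for hyper-K\"ahler varieties of $K3^{[n]}$-type in the rest of the paper. Your analysis is a correct and faithful account of the situation: you identify exactly the obstruction the paper itself isolates in Section~\ref{sec0.3} (the Mukai-vector correspondence $v(\CE)$ is of mixed degree, and there is no general mechanism for replacing it by a pure-degree correspondence realizing a graded isomorphism of motives), and you correctly describe the route the paper takes in the $K3^{[n]}$ case --- Markman's hyperholomorphic bundles supply a degree-homogeneous normalized Fourier transform, yielding Theorem~\ref{thm0.4} at the level of homological motives and, under Franchetta-type hypotheses, Theorem~\ref{thm0.7} for Chow motives. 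One small refinement worth noting on your abelian-variety paragraph: in the Beauville--Deninger--Murre picture the graded isomorphism $h(A)\simeq h(\hat A)$ is not entirely external to the Fourier formalism; the Fourier projectors $\Fp_k = \FF_k\circ\FF^{-1}_{2g-k}$ themselves furnish the Chow--K\"unneth decomposition, and the Fourier transform identifies $h_k(A)$ with $h_{2g-k}(\hat A)(g-k)$, so the remaining input is precisely the Lefschetz self-duality $h_{2g-k}(\hat A)(g-k)\simeq h_k(\hat A)$, as you say. Your bottom line --- that there is no formal passage from the Fourier--Mukai kernel to a degree-zero correspondence, and that Chow--K\"unneth plus some geometric control of the kernel are needed case by case --- agrees with the paper's own assessment.
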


In this paper, we focus on hyper-K\"ahler varieties, for which Fu--Vial \cite[Conjecture 4.7]{FV} proposed a \emph{multiplicative} version of Conjecture \ref{conj0}.

\begin{conj}[Multiplicative Orlov conjecture \cite{FV}]\label{conj1}
Let $X,X'$ be derived equivalent hyper-K\"ahler varieties.\footnote{In fact, if one of the varieties is hyper-K\"ahler, the derived equivalence condition forces the other to be hyper-K\"ahler of the same dimension by \cite{HNW}.} There is an isomorphism of Chow motives which respects the cup-product,
\[
( h(X), \cup ) \simeq (h(X'), \cup).
\]
\end{conj}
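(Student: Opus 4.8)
The plan is to attack the conjecture for $X,X'$ of $K3^{[n]}$-type, which is the setting where Markman's projectively hyperholomorphic bundles are available. Since any equivalence $D^b(X) \simeq D^b(X')$ is of Fourier--Mukai type, it is induced by a kernel $\CE \in D^b(X \times X')$, and the first step is to replace $\CE$ by a convenient representative. Composing with line-bundle twists and shifts, and using the induced Hodge isometry of Mukai lattices (Beckmann--Oberdieck, Taelman) together with Markman's description of the monodromy group, one arranges that $\CE$ is, up to such modifications, an honest projectively hyperholomorphic bundle, so that its Chern classes $c_i(\CE)$ remain of Hodge type for every complex structure in the hyper-K\"ahler family deforming both $X$ and $X'$.

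The second step is a deformation argument. Because $\CE$ is projectively hyperholomorphic, the associated correspondence $\Gamma_\CE$ with cohomology class $\ch(\CE)\sqrt{\mathrm{td}(X \times X')}$ spreads out over the space of $K3^{[n]}$-deformations; consequently the induced map $\Gamma_\CE^* \colon H^*(X) \ra H^*(X')$ intertwines the Looijenga--Lunts--Verbitsky Lie algebra actions, matching the two copies of $\mathfrak{so}(4,20)$ through the Hodge isometry on $H^2$. Carrying out the same construction for the inverse equivalence (or the transpose kernel) and composing, one checks $\Gamma_\CE^*$ is an isomorphism; since $\Gamma_\CE$ and its inverse are algebraic cycles, this already yields an isomorphism of homological motives $h(X) \simeq h(X')$.

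The third step promotes this to the multiplicative statement, and here the hyperholomorphic hypothesis does the essential work. By Verbitsky's structure theorem and its refinement for $K3^{[n]}$-type, the cup-product ring $H^*(X)$ is reconstructed from its LLV-module structure: the subring generated by $H^2$ is $\mathrm{Sym}^\bullet H^2$ modulo its Verbitsky ideal, and the remaining isotypic components are governed by the $\mathfrak{g}(X)$-action. An LLV-equivariant isomorphism therefore automatically respects the cup product on the Verbitsky component; on the extra components one invokes that the Chern classes of a (projectively) hyperholomorphic bundle generate exactly the right subalgebra and that the Mukai vector $v(\CE)$ is group-like under convolution of kernels, so that $\Gamma_\CE^*$ is a ring homomorphism throughout. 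Together with the presentation of $(h(X),\cup)$ this gives the isomorphism of homological motives respecting cup-product; for the Chow refinement one repeats the argument with cycles modulo rational equivalence, the only new input being that homologically trivial modifications of the relevant correspondences are controlled by the Franchetta property for self-products of $K3$ surfaces.

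The hard part will be the third step. LLV-equivariance pins down the ring structure only on the Verbitsky component, and for $n \ge 2$ there is cohomology outside it, so one genuinely needs the finer multiplicative behavior of hyperholomorphic bundles --- that $v(\CE)$ convolves multiplicatively and that the Chern-class correspondences it builds are compatible with cup-product --- rather than Hodge-theoretic equivariance alone. A secondary difficulty is the bookkeeping of Todd corrections, so that it is $v(\CE) = \ch(\CE)\sqrt{\mathrm{td}}$, and not merely $\ch(\CE)$, that deforms hyperholomorphically and behaves well under composition of kernels.
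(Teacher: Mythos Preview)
First, note that the statement you are addressing is a \emph{conjecture}: the paper does not prove it in full, only its homological specialization for $K3^{[n]}$-type (Theorem~\ref{thm0.4}) and a conditional Chow version for moduli spaces on $K3$ surfaces (Theorem~\ref{thm0.7}). So your target should be compared with those partial results.

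There is a genuine gap in your third step, and it propagates backward into the second. The correspondence $\Gamma_\CE^*$ induced by a projectively hyperholomorphic bundle of nonzero rank is \emph{degree-reversing}, not degree-preserving: in Markman's language it conjugates the grading operator $h$ to $-h$, and it sends the cup-product on $H^*(X)$ to the \emph{convolution} product on $H^*(X')$, not to the cup-product (see Proposition~\ref{prop1.8} and \cite[Section 10]{Markman}). Thus $\Gamma_\CE^*$ is never a ring homomorphism for $\cup$, and LLV-``equivariance'' in your sense fails precisely because the isometry $\widetilde{H}(\mathfrak{F})$ swaps $\alpha$ and $\beta$. Your argument that ``$v(\CE)$ is group-like under convolution of kernels'' concerns the behavior of kernels under composition, not the behavior of the induced map on cohomology classes under cup-product; these are unrelated. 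The paper closes this gap by a second, separate construction: a \emph{Fourier self-transform} $\FF_L = \exp(e_L)\exp(-f_L)\exp(e_L)$ built from a Lefschetz $\mathfrak{sl}_2$-triple on $Y$ (Section~\ref{sec2.2}), which is again degree-reversing and conjugates convolution back to cup-product on the same variety. The multiplicative isomorphism $(h_{\mathrm{hom}}(X),\cup)\simeq(h_{\mathrm{hom}}(Y),\cup)$ is then obtained as the \emph{composition} $\widetilde{\FF}_L^{-1}\circ\widetilde{\FF}$, each factor carrying $\cup$ to $\ast$ and $\ast$ back to $\cup$. This autoduality step is the essential idea you are missing.

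There is a second, independent gap in your first step. It is not known that an arbitrary derived equivalence of $K3^{[n]}$-type varieties can be realized, even after line-bundle twists and shifts, by a single projectively hyperholomorphic bundle. The paper avoids this entirely: it shows (Proposition~\ref{D=>HI}, via Taelman and Witt cancellation) that a derived equivalence forces a rational Hodge isometry $H^2(X,\BQ)\simeq H^2(X',\BQ)$, and then invokes Markman's structure theorem (Theorem~\ref{thm:classify}) to connect $X$ and $X'$ by a finite chain of hyperholomorphic-bundle equivalences and Hodge-preserving parallel transports, treating each link separately. Finally, your last sentence drastically understates the Chow case: even with Franchetta, the paper only obtains the Chow statement for moduli spaces on a Picard-rank-$1$ $K3$, and this requires the substantial construction of Section~\ref{sec2} to produce \emph{generically defined} kappa classes.
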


Assume that $X,X'$ are of dimension $2n$. Conjecture \ref{conj1} predicts the existence of algebraic correspondences
\begin{equation}\label{Orlov_cycle}
[Z] \in \mathrm{CH}^{2n}(X \times X'), \quad [Z'] \in  \mathrm{CH}^{2n}(X' \times X)
\end{equation}
which are mutually inverse
\[
[Z']\circ[Z] = [\Delta_X] \in \mathrm{CH}^{2n}(X \times X), \quad [Z]\circ[Z'] = [\Delta_{X'}] \in \mathrm{CH}^{2n}(X'\times X')\]
and respect the cup-product. Here $\Delta_{(-)}$ is the diagonal class, $\circ$ stands for the composition of correspondences, and the cup-product is induced by the small diagonal class.

When $X$ is a $K3$ surface, Conjecture \ref{conj0} was proven in \cite{H_Motives} and Conjecture \ref{conj1} was proven in \cite[Corollary 2]{FV}. In higher dimension, very little evidence is known for Conjectures~\ref{conj0} and~\ref{conj1}. Even the following homological specialization of Conjecture \ref{conj1} is wide open.

\begin{conj}[Multiplicative Orlov conjecture for homological motives \cite{FV}]\label{conj2}
Let $X,X'$ be derived equivalent hyper-K\"ahler varieties. There is an isomorphism of homological motives which respects the cup-product,
\[
( h_{\mathrm{hom}}(X), \cup ) \simeq (h_{\mathrm{hom}}(X'), \cup).
\]
\end{conj}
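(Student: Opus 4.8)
\noindent\textit{Sketch of strategy.} We do not know how to prove Conjecture~\ref{conj2} in general; the approach below establishes it for hyper-K\"ahler varieties of $K3^{[n]}$-type, which is part~(a) of the results announced in the abstract. Write $\dim X=\dim X'=2n$.

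The plan is to produce the mutually inverse correspondences \eqref{Orlov_cycle} already at the level of cohomology, in a form manifestly built from Markman's projectively hyperholomorphic bundles. First I would record that a derived equivalence $\Phi\colon D^b(X)\xrightarrow{\sim}D^b(X')$ induces a Hodge isometry between the Mukai-type weight-two Hodge structures $\widetilde H(X)\xrightarrow{\sim}\widetilde H(X')$ (the extended Mukai lattices, after Taelman and Beckmann), and that for $K3^{[n]}$-type the Hodge-theoretic analysis of Fourier--Mukai kernels, together with Markman's description of the monodromy group, lets one upgrade this to a distinguished \emph{grading-preserving} isometry $\psi\colon H^*(X)\xrightarrow{\sim}H^*(X')$ equivariant for the Looijenga--Lunts--Verbitsky (LLV) Lie algebras $\mathfrak{g}(X),\mathfrak{g}(X')$ and carrying the grading operator to the grading operator; this is the hyper-K\"ahler counterpart of passing, for $K3$ surfaces, from the Mukai isometry to an honest isomorphism of $H^*$ realized by a universal sheaf. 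Next I would invoke Markman's construction, which attaches to $\psi$ a twisted, projectively hyperholomorphic sheaf $\mathcal{E}$ on $X\times X'$ and an explicit correspondence $[Z]\in\mathrm{CH}^{2n}(X\times X')$ — a suitably normalized component of the Mukai-type class $\mathrm{ch}(\mathcal{E})\cdot\sqrt{\mathrm{td}_{X\times X'}}$ — whose cohomological realization is exactly $\psi$. Running the same construction for $\Phi^{-1}$ (equivalently, for the transposed isometry) yields $[Z']\in\mathrm{CH}^{2n}(X'\times X)$, and $[Z']\circ[Z]=[\Delta_X]$, $[Z]\circ[Z']=[\Delta_{X'}]$ follow because these correspondences realize mutually inverse isometries and the passage from hyperholomorphic kernels to Fourier transforms is compatible with composition.

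The main obstacle is \emph{multiplicativity}: that $[Z]$ respects the cup product, equivalently that $\psi$ is a ring isomorphism — which the cohomological action of a generic Fourier--Mukai kernel is not, since it interchanges cup product with convolution. This is precisely where hyperholomorphicity is indispensable: the characteristic classes of a projectively hyperholomorphic bundle stay of Hodge type along the entire twistor deformation, hence are monodromy invariant, and by Verbitsky's results together with LLV theory this forces $\psi$ to be a morphism of $\mathfrak{g}(X)$-representations compatible with Poincar\'e duality and with the grading. I would then reduce the ring statement to the Verbitsky component $SH^*(X)\subset H^*(X)$ — the subalgebra generated by $H^2(X)$ — on which $\psi$ is pinned down by the Hodge isometry $H^2(X)\xrightarrow{\sim}H^2(X')$ and is multiplicative by the Fujiki relation; the remaining cohomology splits into the finitely many LLV-isotypic pieces, and a representation-theoretic matching — using that the small-diagonal class $\delta_X\in H^*(X\times X\times X)$ encoding the product lies in the monodromy-invariant subspace — should propagate multiplicativity from $SH^*(X)$ to all of $H^*(X)$. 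I expect this last step of LLV bookkeeping, together with the careful handling of the Brauer twist carried by $\mathcal{E}$ (which must be compatible with the twists on $X$ and $X'$), to be the technical heart of the argument; the hypothesis of $K3^{[n]}$-type enters precisely because it is there that Markman's hyperholomorphic sheaves and monodromy results are available.
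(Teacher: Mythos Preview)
There is a genuine gap, and it lies exactly where you flag the difficulty. Markman's hyperholomorphic bundle on a product of $K3^{[n]}$-type varieties does \emph{not} produce a grading-preserving correspondence: the kappa class $\kappa(\CE)\sqrt{\mathrm{td}}$ is of mixed degree, and the induced cohomological map is \emph{degree-reversing}, sending $H^i$ to $H^{4n-i}$ (this is Markman's Lemma~4.1(1), recorded in the paper as Proposition~\ref{prop1.8}(a)). Consequently it cannot realize a ring isomorphism for the cup-product; what it does is intertwine the cup-product on one side with Markman's \emph{convolution} product on the other. Your proposed remedy --- forcing multiplicativity via LLV-equivariance and a reduction to the Verbitsky subalgebra --- does not address this: LLV-equivariance is perfectly compatible with degree-reversal, and the monodromy-invariance of the small diagonal does not single out cup over convolution, since both are LLV-invariant.

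The paper's resolution is different and does not try to make a single Fourier transform multiplicative. It first reduces, via a Witt-cancellation argument, from a derived equivalence to a rational Hodge isometry on $H^2$ (Proposition~\ref{D=>HI}), and then uses Markman's classification (Theorem~\ref{thm:classify}) to connect $X$ and $X'$ by a finite chain in which each step is either a birational map (handled by Riess) or a twisted derived equivalence via a hyperholomorphic bundle $Y\leftrightarrow X$. For the latter step the crucial idea is to build a \emph{second} algebraic, degree-reversing self-correspondence on $Y$, namely the Fourier self-transform $\mathfrak{F}_L=\exp(e_L)\exp(-f_L)\exp(e_L)$ assembled from the Lefschetz operator and its algebraic inverse $f_L$ (the latter available by the Lefschetz standard conjecture for $K3^{[n]}$-type). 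After the normalization of Section~\ref{sec2.2}, $\widetilde{\mathfrak{F}}_L$ also interchanges cup and convolution on $H^*(Y)$; composing $\widetilde{\mathfrak{F}}$ with $\widetilde{\mathfrak{F}}_L^{-1}$ then yields a grading-preserving, cup-preserving algebraic isomorphism $h_{\mathrm{hom}}(X)\simeq h_{\mathrm{hom}}(Y)$ (Proposition~\ref{prop:ffauto} and Corollary~\ref{cor:isomxy}). In short: you need two degree-reversing transforms, not one, and the passage from $X$ to $X'$ goes through a chain rather than a single bundle on $X\times X'$.
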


More precisely, Conjecture \ref{conj2} predicts the existence of algebraic cycles (\ref{Orlov_cycle}) which induce mutually inverse cohomological correspondences
\[
[Z']\circ[Z] = [\Delta_X] \in H^{4n}(X \times X, \BQ), \quad [Z]\circ[Z'] = [\Delta_{X'}] \in H^{4n}(X'\times X', \BQ)\]
respecting the cup-product
\[
(H^*(X, \BQ), \cup) \xrightleftharpoons[~~~{[Z']}~~~]{~~~{[Z]}~~~} (H^*(X', \BQ), \cup).
\]

We discuss some evidence for Conjecture \ref{conj2}. Taelman \cite[Theorem D]{Taelman} showed that derived equivalent hyper-K\"ahler varieties $X,X'$ have isomorphic Hodge structures
\[
H^i(X, \BQ) \simeq H^i(X', \BQ);\]
therefore, assuming the Hodge conjecture, one obtains the algebraic cycles (\ref{Orlov_cycle}) which induce isomorphisms of the homological motives $h_{\mathrm{hom}}(X)\simeq h_{\mathrm{hom}}(X')$. However, these isomorphisms \emph{a priori} do not preserve the cup-product. On the other hand, Fu--Vial \cite[Proposition~4.8]{FV} showed that derived equivalent hyper-K\"ahler varieties $X,X'$ admit an isomorphism of cohomology rings with $\BC$-coefficients 
\[
(H^*(X,\BC),\cup) \simeq (H^*(X', \BC), \cup);
\]
but it is unclear if such an isomorphism can be lifted to cohomology with $\BQ$-coefficients preserving the Hodge structures. Finally, we note that in all the known examples, derived equivalent hyper-K\"ahler varieties are deformation equivalent; in particular they share isomorphic cohomology rings.

\subsection{Hyper-K\"ahler varieties of $K3^{[n]}$-type}

Although the conjectures in Section \ref{sec0.1} seem to be out of reach in general, we show in this paper that much evidence of these conjectures can be obtained for hyper-K\"ahler varieties of $K3^{[n]}$-type using the projectively hyperholomorphic bundles constructed recently by Markman \cite{Markman}.

Our first result shows that Conjecture \ref{conj2} concerning homological motives is completely settled for hyper-K\"ahler varieties of $K3^{[n]}$-type.

\begin{thm}\label{thm0.4}
    Assume that $X,X'$ are derived equivalent hyper-K\"ahler varieties of $K3^{[n]}$-type. Then there is an isomorphism of homological motives which respects the cup-product,
    \[
    (h_{\mathrm{hom}}(X), \cup) \simeq (h_{\mathrm{hom}}(X'), \cup).
    \]
\end{thm}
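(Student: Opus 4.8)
The plan is to build the required cohomological correspondences out of Markman's projectively hyperholomorphic bundles. Given a derived equivalence $\Phi\colon D^b(X)\xrightarrow{\sim}D^b(X')$ between hyper-K\"ahler varieties of $K3^{[n]}$-type, Taelman's theorem produces a Hodge isometry between the Mukai lattices (or at least an isomorphism of the relevant Hodge structures on $H^2$), and I would use this to transport Markman's universal bundle on $X\times X'$: one obtains a coherent sheaf (or complex) $\mathcal{E}$ on $X\times X'$ whose fiberwise Chern character encodes $\Phi$, and which is projectively hyperholomorphic with respect to the product hyper-K\"ahler structure. Set $v=\sqrt{\mathrm{td}}\cdot\ch(\mathcal{E})\in H^*(X\times X',\BQ)$ (the Mukai vector of the kernel), and let $[Z]$ be the associated correspondence; define $[Z']$ symmetrically from the inverse equivalence, or equivalently from the dual kernel. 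The key input is that hyperholomorphicity forces the Chern classes of $\mathcal{E}$ to lie in the subring of $H^*(X\times X',\BQ)$ generated by the Looijenga--Lunts--Verbitsky ($\mathrm{LLV}$) algebra action — concretely, $c_i(\mathcal{E})$ are $\mathrm{SO}$-invariant classes, hence expressible via the $H^2$'s of the two factors. This is what makes the correspondence computable and its cohomological action controllable.

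The heart of the argument is then the identity $[Z']\circ[Z]=[\Delta_X]$ (and symmetrically). I would prove this in cohomology by reducing to a statement about the Mukai pairing: the composition of Fourier--Mukai-type correspondences on cohomology is governed by the Euler pairing on the Mukai lattice, and because $\Phi$ is an equivalence the induced map on the full cohomology is an isometry for the Mukai pairing, which pins down $[Z']\circ[Z]$ up to the identity. More carefully, since both $X$ and $X'$ are of $K3^{[n]}$-type and the classes involved are $\mathrm{LLV}$-invariant, I can verify the identity after deforming to a very general (or otherwise convenient) member of the moduli of such varieties — the correspondences extend flatly in families of hyperholomorphic bundles by Markman's construction — and there the cohomology ring and the $\mathrm{LLV}$ action are rigid enough that the composition is forced. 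Alternatively, one checks the identity on each $\mathrm{LLV}$-isotypic component of $H^*(X,\BQ)$: on the "$K3$-like" generating piece it follows from the $K3$ case \cite{H_Motives}, and on the rest it follows because both sides are determined by their restriction to that piece under the ring structure.

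Finally, compatibility with the cup-product: I would express it as the statement that the kernel $Z$, when composed with the small diagonals, intertwines the two multiplication maps $H^*(X)\otimes H^*(X)\to H^*(X)$ and $H^*(X')\otimes H^*(X')\to H^*(X')$. Because the correspondence $[Z]$ is an $\mathrm{LLV}$-algebra morphism by construction (its defining class is $\mathrm{SO}$-invariant), and because the cup-product on the cohomology of a $K3^{[n]}$-type variety is itself determined as a map of $\mathrm{LLV}$-representations — this is the Fu--Vial observation upgraded: over $\BQ$ the multiplication is the \emph{unique} $\mathrm{LLV}$-equivariant product once one knows it on generators — the two ring structures must correspond. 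The main obstacle I anticipate is precisely this last rigidity statement over $\BQ$ rather than $\BC$: Fu--Vial only get a $\BC$-coefficient ring isomorphism because the multiplicative structure is not obviously $\mathrm{LLV}$-determined integrally, so the real work is showing that Markman's bundle produces a correspondence that is \emph{a priori} multiplicative over $\BQ$ — I expect this to come from realizing the cup-product itself through a hyperholomorphic (or at least $\mathrm{LLV}$-natural) class on $X\times X\times X$ and checking that $Z\times Z\times Z$ carries it to its analogue on $X'$, using that all the Chern-class data is $\mathrm{SO}$-invariant and hence transported strictly.
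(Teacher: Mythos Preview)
Your proposal has a genuine structural gap. The correspondence $[Z]=\sqrt{\mathrm{td}}\cdot\ch(\CE)$ coming from a hyperholomorphic kernel is of \emph{mixed} cohomological degree, and the induced map on cohomology is \emph{degree-reversing}: by Markman's result it sends $H^i$ to $H^{4n-i}$. So $[Z']\circ[Z]=[\Delta_X]$ holds only as an identity of ungraded correspondences, and $[Z]$ cannot by itself be an isomorphism of graded homological motives, let alone a ring homomorphism. This is exactly the difficulty the paper flags in Section~\ref{sec0.3}, and your proposal does not resolve it. Relatedly, your claim that $[Z]$ is an LLV-algebra morphism preserving cup-product is false for this reason: Markman shows that such a Fourier transform conjugates the cup-product on $H^*(X)$ to the \emph{convolution} product on $H^*(Y)$, not to the cup-product.

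The paper's actual mechanism fills this gap with a second, independent ingredient you are missing: a \emph{Fourier self-transform} $\FF_L=\exp(e_L)\exp(-f_L)\exp(e_L)$ on the $Y$ side, built from the Lefschetz $\mathfrak{sl}_2$-triple (with $f_L$ algebraic by the Lefschetz standard conjecture for $K3^{[n]}$-type). This $\FF_L$ is also degree-reversing and, after normalization, conjugates cup-product to convolution on $H^*(Y)$. Composing $\widetilde{\FF}$ with $\widetilde{\FF}_L^{-1}$ then gives a degree-preserving, cup-product-preserving algebraic correspondence. There is also a reduction step you skip: the paper does not try to realize the given derived equivalence itself by a hyperholomorphic bundle on $X\times X'$; instead it passes from derived equivalence to a rational Hodge isometry on $H^2$ (via Taelman and Witt cancellation), and then invokes Markman's classification to write $X,X'$ as a finite chain where each step is either a hyperholomorphic-bundle equivalence of the specific deformed-BKR type, or a parallel transport (handled by global Torelli and Riess). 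Your hoped-for rigidity statement that the cup-product is LLV-determined over $\BQ$ is never used and is not established.
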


\begin{rmk}
    It is shown in Remark \ref{rmkfrob} that the isomorphisms we construct for Theorem~\ref{thm0.4} and the subsequent Theorems \ref{thm0.5}, \ref{thm0.7}(b), and \ref{thm0.8} send a point class to a point class. In view of~\cite[Proposition~2.11]{FV}, all these isomorphisms respect the Frobenius algebra structure defined in~\cite[Section~2.2]{FV}.
\end{rmk}

Next, we consider an interesting class of $K3^{[n]}$-type varieties given by the moduli space of stable sheaves on a $K3$ surface. Let $S$ be a projective $K3$ surface whose Mukai lattice is denoted $\widetilde{H}(S, \BZ)$. We consider a primitive Mukai vector $v \in \widetilde{H}(S, \BZ)$ with $v^2\geq 0$. For a~$v$-generic polarization, the moduli space $M_{S,v}$ of stable sheaves $\CE$ on $S$ with~$v(\CE) = v$ is a hyper-K\"ahler variety of $K3^{[n]}$-type of dimension $2n =  v^2+2$.\footnote{For different choices of $v$-generic polarizations, we get birational hyper-K\"ahler varieties. We are mainly concerned with Chow motives in this paper; by a result of Reiss \cite{Riess} the Chow motive of a hyper-K\"ahler variety, as an algebra object, is not sensitive to the birational model.} The following property of these hyper-K\"ahler varieties was proven by Beckmann \cite[Corollary 9.6]{Beck}.

\begin{thm}[Beckmann \cite{Beck}]\label{thm_Beck}
    Let $v$ be a primitive Mukai vector on a projective $K3$ surface~$S$.  If $X$ is a hyper-K\"ahler variety of $K3^{[n]}$-type which is derived equivalent to $M_{S,v}$, then $X$ is birational to $M_{S,v'}$, where $v'$ is a primitive Mukai vector on $S$ with $v^2={v'}^2$.
\end{thm}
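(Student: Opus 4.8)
The plan is to reduce the claim to the Hodge-theoretic Torelli theorem for hyper-K\"ahler manifolds of $K3^{[n]}$-type, by exploiting the derived invariance of the extended Mukai lattice. Write $v^2 = 2n-2 \geq 0$, so that $M := M_{S,v}$ has dimension $2n$, and recall that for a hyper-K\"ahler variety $Y$ of $K3^{[n]}$-type one has the rank-$24$ extended Mukai (Looijenga--Lunts--Verbitsky) lattice $\widetilde{H}(Y,\BZ) = H^2(Y,\BZ)\oplus U$ of signature $(4,20)$, carrying a weight-$2$ Hodge structure with $\widetilde{H}^{2,0}(Y)=H^{2,0}(Y)$ and $U$ of type $(1,1)$, together with a distinguished \emph{Mukai vector} $v_Y\in\widetilde{H}^{1,1}(Y,\BZ)$: a primitive algebraic class with $v_Y^2=2n-2$ and $v_Y^\perp=H^2(Y,\BZ)$ as Hodge structures.

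The decisive input I would invoke is Beckmann's main theorem that a Fourier--Mukai equivalence $\Phi\colon D^b(X)\xrightarrow{\ \sim\ }D^b(M)$ induces a Hodge isometry $\widetilde{\Phi}\colon\widetilde{H}(X,\BZ)\xrightarrow{\ \sim\ }\widetilde{H}(M,\BZ)$ which is a parallel-transport operator --- a refinement of Taelman's theorem obtained via Markman's construction of the extended lattice and the representation theory of the LLV algebra. Composing with the classical Hodge isometry $\widetilde{H}(M,\BZ)\cong\widetilde{H}(S,\BZ)$ onto the Mukai lattice of $S$ furnished by the (quasi-)universal sheaf on $S\times M$ (Mukai, O'Grady, Yoshioka), which is likewise compatible with parallel transport, one obtains a parallel-transport Hodge isometry $g\colon\widetilde{H}(X,\BZ)\xrightarrow{\ \sim\ }\widetilde{H}(S,\BZ)$.

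Set $v':= g(v_X)\in\widetilde{H}(S,\BZ)$. It is primitive, of Hodge type $(1,1)$, and satisfies ${v'}^2 = v_X^2 = 2n-2$; hence, after normalizing to a positive class by an autoequivalence of $D^b(S)$ (which alters neither ${v'}^2$ nor the parallel-transport and Hodge-isometry properties of $g$), $v'$ is the Mukai vector of a stable sheaf for a $v'$-generic polarization, and $M_{S,v'}$ is a smooth projective hyper-K\"ahler variety of $K3^{[n]}$-type of dimension $2n$ with $H^2(M_{S,v'},\BZ)={v'}^\perp\subset\widetilde{H}(S,\BZ)$ as Hodge structures (Yoshioka). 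Since $g$ carries $v_X^\perp$ isometrically onto ${v'}^\perp$, it restricts to a Hodge isometry $H^2(X,\BZ)\xrightarrow{\ \sim\ }H^2(M_{S,v'},\BZ)$, and this restriction is a parallel-transport operator, because the restriction to $H^2$ of such an operator on $\widetilde{H}$ is one. By the Torelli theorem for manifolds of $K3^{[n]}$-type (Verbitsky, Markman), $X$ is birational to $M_{S,v'}$, and ${v'}^2 = 2n-2 = v^2$; this is the assertion.

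The step I expect to be the genuine obstacle is the decisive input above: the assertion that the integral extended Mukai lattice, with its Hodge structure, is a derived invariant and that the induced isometry is a parallel-transport operator. Taelman's formula for the action of a Fourier--Mukai kernel on total cohomology yields only a graded (and a priori non-integral) statement; upgrading it to an integral Hodge isometry of the rank-$24$ lattice requires using that a derived equivalence intertwines the actions of the LLV Lie algebras $\mathfrak{so}(\widetilde{H}(X))$ and $\mathfrak{so}(\widetilde{H}(M))$ on total cohomology, and the parallel-transport property must then be extracted by a deformation argument along twistor families. Granting this, the remaining ingredients --- the identification $\widetilde{H}(M,\BZ)\cong\widetilde{H}(S,\BZ)$, Yoshioka's non-emptiness and smoothness results for moduli of sheaves, and the Torelli theorem --- are standard.
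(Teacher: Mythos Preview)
The paper does not give its own proof of this statement: Theorem~\ref{thm_Beck} is quoted verbatim as a result of Beckmann, with the citation \cite[Corollary~9.6]{Beck}, and is used as a black box. So there is no proof in the paper to compare against.

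That said, your outline is a faithful sketch of Beckmann's own argument. The key point is exactly the one you flag as the genuine obstacle: Beckmann's main contribution is precisely that the induced isometry on the integral extended Mukai lattice is a parallel-transport operator in the sense of Markman. Once that is granted, the deduction via the classical isometry $\widetilde{H}(M_{S,v},\BZ)\cong\widetilde{H}(S,\BZ)$, Yoshioka's results, and Markman's Hodge-theoretic Torelli theorem proceeds as you describe. One small caution: the passage from a parallel-transport operator on $\widetilde{H}$ to one on $H^2$ is not quite ``restriction'' in the naive sense (the operator need not preserve the splitting $H^2\oplus U$); rather, one uses that $g$ sends $v_X$ to $v'$ and hence $v_X^\perp = H^2(X,\BZ)$ isometrically onto ${v'}^\perp = H^2(M_{S,v'},\BZ)$, and then checks separately that this induced map on $H^2$ lies in the monodromy group. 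Beckmann handles this, and your sketch is consistent with it, but the phrase ``the restriction to $H^2$ of such an operator on $\widetilde{H}$ is one'' slightly understates what is needed.
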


In view of Theorem \ref{thm_Beck} and the birational invariance of Chow motives for hyper-K\"ahler varieties \cite{Riess}, it is natural to ask how the homological and the Chow motives of~$M_{S,v}, M_{S,v'}$ are related. The proof of Theorem \ref{thm0.4} also yields the following.

\begin{thm}\label{thm0.5}
Let $S$ be a projective $K3$ surface. Let $v, v' \in \widetilde{H}(S, \BZ)$ be primitive Mukai vectors with $v^2 = {v'}^2=2n-2 \geq 0$. Then there is an isomorphism of the homological motives of~$M_{S,v}$ and $M_{S,v'}$ which respects the cup-product,
\[
(h_{\mathrm{hom}}(M_{S,v}), \cup) \simeq (h_{\mathrm{hom}}(M_{S,v'}), \cup).
\]
\end{thm}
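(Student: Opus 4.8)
The plan is to run the mechanism behind Theorem~\ref{thm0.4} essentially unchanged, replacing only the single input where that proof invokes the derived equivalence. Recall the shape of the argument for Theorem~\ref{thm0.4}: from a derived equivalence $D^b(X)\simeq D^b(X')$ one first extracts, via Taelman's theorem, an orientation-preserving Hodge isometry between the extended Mukai lattices of $X$ and $X'$; Markman's theorem then converts this isometry into a projectively hyperholomorphic bundle $\CE$ on $X\times X'$; and finally one analyzes the ``Fourier transform'' of $\CE$, namely the algebraic correspondence in $\mathrm{CH}^*(X\times X')$ assembled from the K\"unneth components of $\ch(\CE)$ with the appropriate Todd--Mukai normalization. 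Using the hyperholomorphicity of $\CE$, the compatibility of the induced maps with the Looijenga--Lunts--Verbitsky Lie algebra actions on $H^*(X)$ and $H^*(X')$, and the structure of these cohomologies as modules over that algebra, one shows that this correspondence and its companion on $X'\times X$ are mutually inverse cohomological correspondences respecting the cup-product. Crucially, only the first step uses the derived equivalence: everything from ``Markman's theorem produces $\CE$'' onward is a statement purely about a projectively hyperholomorphic bundle on a product of two $K3^{[n]}$-type varieties.

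\textbf{The modification for Theorem~\ref{thm0.5}.} For $X=M_{S,v}$ and $X'=M_{S,v'}$ with $v^2={v'}^2=2n-2$, it therefore suffices to produce a projectively hyperholomorphic bundle $\CE$ on $M_{S,v}\times M_{S,v'}$; the remainder of the proof of Theorem~\ref{thm0.4} then applies word for word. Such a bundle is precisely what Markman's construction provides in this situation. Indeed, by Beckmann's work the extended Mukai Hodge structure of a moduli space $M_{S,w}$ is realized as the (hyperbolically extended) orthogonal complement of $w$ inside the Mukai lattice $\widetilde H(S,\BZ)$ --- this is the comparison underlying Theorem~\ref{thm_Beck} --- so for $v^2={v'}^2$ one obtains a Hodge isometry $\widetilde H(M_{S,v},\BZ)\xrightarrow{\ \sim\ }\widetilde H(M_{S,v'},\BZ)$. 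Integrally this rests on Eichler's criterion: the unimodular lattice $\widetilde H(S,\BZ)$ has several hyperbolic summands, so its orthogonal group acts transitively on primitive vectors of a fixed square, whence $v^\perp\cong{v'}^\perp$; matching the transcendental parts $T(S)$ then promotes this to a Hodge isometry (and since we work with rational coefficients only a rational such isometry is needed, which is immediate, $v^\perp\otimes\BQ$ and ${v'}^\perp\otimes\BQ$ being isometric $\BQ$-quadratic Hodge structures as soon as $v^2={v'}^2$). If the isometry so obtained does not preserve the orientation of the positive cone, we precompose it with the reflection in an algebraic class of positive square in $\widetilde H(S,\BZ)$, such as the class $(1,0,-1)\in H^0(S)\oplus H^4(S)$, which is an integral Hodge isometry reversing that orientation. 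Feeding the resulting orientation-preserving Hodge isometry into Markman's theorem produces $\CE$, and hence the asserted isomorphism $(h_{\mathrm{hom}}(M_{S,v}),\cup)\simeq(h_{\mathrm{hom}}(M_{S,v'}),\cup)$.

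\textbf{Main obstacle.} The real work lies entirely in the part of the proof of Theorem~\ref{thm0.4} that is reused: showing that the Fourier transform of a projectively hyperholomorphic bundle on a product of $K3^{[n]}$-type varieties is invertible at the level of homological motives and intertwines the cup-products. I expect the hard point there --- and the one we inherit --- to be controlling \emph{all} of $H^*$ rather than just the Verbitsky component generated by $H^2$, which is where the module structure over the Looijenga--Lunts--Verbitsky algebra must be brought in. By contrast, the part specific to the present theorem is soft: it is only the observation that the hypothesis $v^2={v'}^2$ already suffices to produce Markman's bundle, together with the harmless orientation adjustment above. Finally, as in the remark following Theorem~\ref{thm0.4}, one notes that the correspondence we construct sends a point class to a point class --- this is visible from the top K\"unneth component of $\ch(\CE)$ --- so the isomorphism also respects the Frobenius-algebra structure.
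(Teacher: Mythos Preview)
Your reduction is sound in spirit and is precisely the alternative route the paper itself notes in the remark after its proof of Theorem~\ref{thm0.5}: since $H^2(M_{S,v},\BQ)\simeq v^\perp_\BQ$ and $v^2={v'}^2$ forces $v^\perp_\BQ\simeq {v'}^\perp_\BQ$ as quadratic Hodge structures, the two moduli spaces are Hodge isometric and Theorem~\ref{thm0.8} applies. The paper's \emph{primary} proof is different and more explicit: it compares each $M_{S,v}$ with the Hilbert scheme $S^{[n]}$ via Zhang's projectively hyperholomorphic bundle realizing a twisted equivalence $D^b(S^{[n]})\simeq D^b(M'_{S,v},\theta_v)$ for a birational model $M'_{S,v}$, applies Corollary~\ref{cor:isomxy} to that single bundle step, and uses Riess for the birational step. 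This explicit form is what is later needed for Theorem~\ref{thm0.7}.

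However, your account of the machinery you are borrowing has two real inaccuracies. First, Markman's result (Theorem~\ref{thm:classify}) does not turn a Hodge isometry into a \emph{single} hyperholomorphic bundle on $X\times X'$; it yields a finite chain $X=X_1,\ldots,X_t=X'$ in which consecutive pairs are joined either by such a bundle or by a Hodge-preserving parallel transport (hence birational, handled by Riess), and one argues link by link. No orientation hypothesis is needed, so your adjustment via a reflection is superfluous. Second, and more seriously, the Fourier transform $\mathfrak{F}=\kappa(\CE)\sqrt{\mathrm{td}}$ coming from one hyperholomorphic bundle is \emph{degree-reversing} and does \emph{not} intertwine cup with cup: it sends $\cup$ on one side to a \emph{convolution} product $\ast$ on the other (Propositions~\ref{prop1.6} and~\ref{prop1.8}). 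The passage from $(\ast_Y)$ back to $(\cup_Y)$ requires a separate algebraic ``Fourier self-transform'' $\mathfrak{F}_L=\exp(e_L)\exp(-f_L)\exp(e_L)$ built from the hard Lefschetz $\mathfrak{sl}_2$, with $f_L$ algebraic by the Lefschetz standard conjecture for $K3^{[n]}$-type; this is the content of Proposition~\ref{prop:ffauto} and is the genuine new ingredient beyond Markman. Your sketch omits this step, so as written it does not actually establish the cup-product compatibility you claim.
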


Finally, we discuss evidence for Chow motives. Our main result is that when $S$ is of Picard rank $1$, we are able to realize the isomorphism in Theorem~\ref{thm0.5} via \emph{generically defined cycles}, which allows us to lift the homological statement to Chow using the (conjectural) Franchetta properties for the self-products of polarized $K3$ surfaces. 

Assume $g\geq 2$. A $K3$ surface of genus $g$ is a primitively polarized $K3$ surface $(S,H)$ with~$H^2=2g-2$. Let $\CS_g \to \CX_g$ be the universal family over the moduli stack of $K3$ surfaces of genus $g$. We say that $K3$ surfaces of genus $g$ satisfy the \emph{$n$-th Franchetta property} if the generically defined cycles 
\[
\mathrm{GDCH}^*(S^n):= \mathrm{Im}\left( \mathrm{CH}^*({\CS^n_g}_{/\CX_g}) \to \mathrm{CH}^*(S^n) \right)
\]
on the $n$-th product of any fiber $S\subset \CS_g$ map injectively to the cohomology via the cycle class map, \emph{i.e.},
\[
\mathrm{cl}: \mathrm{GDCH}^*(S^n) \hookrightarrow H^{*}(S^n, \BQ).
\]
For $n=1$, the Franchetta property was conjectured by O'Grady \cite{OG}, known as the \emph{generalized Franchetta conjecture} for $K3$ surfaces, and has been verified for low genus cases using the global geometry of the universal family \cite{PSY,FL, Lu}. The Franchetta property was further conjectured to hold in \cite{FLV, FLV2} for any $n\geq 1$; see \cite[Conjecture 1.5]{LV}. 

\begin{thm}\label{thm0.7}
Let $(S,H)$ be a $K3$ surface of genus $g$ with $\Pic(S) = \BZ H$. Let $v,v' \in \widetilde{H}(S, \BZ)$ be primitive Mukai vectors with $v^2={v'}^2=2n-2\geq 0$. 
\begin{enumerate}
    \item[(a)] If the $2n$-th Franchetta property holds for $K3$ surfaces of genus $g$, then there is an isomorphism of Chow motives
    \[
    h(M_{S,v}) \simeq h(M_{S,v'}).
    \]
    \item[(b)]  If the $3n$-th Franchetta property holds for $K3$ surfaces of genus $g$, then there is an isomorphism of Chow motives which respects the cup-product,
    \[
    (h(M_{S,v}), \cup) \simeq (h(M_{S,v'}), \cup).
    \]
\end{enumerate}
\end{thm}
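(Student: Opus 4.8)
The plan is to run the proof of Theorems~\ref{thm0.4} and~\ref{thm0.5} while recording that the correspondences it produces are \emph{generically defined}, and then to use the Franchetta property to promote the resulting identities of correspondences from cohomology to Chow groups. First I would recall from the proof of Theorem~\ref{thm0.5} the correspondences $\gamma \in \mathrm{CH}^{2n}(M_{S,v}\times M_{S,v'})$ and $\gamma' \in \mathrm{CH}^{2n}(M_{S,v'}\times M_{S,v})$, built out of Markman's projectively hyperholomorphic bundle and the (twisted) universal sheaves, which induce mutually inverse isomorphisms $(h_{\mathrm{hom}}(M_{S,v}),\cup)\simeq(h_{\mathrm{hom}}(M_{S,v'}),\cup)$. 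The hypothesis $\Pic(S)=\BZ H$ allows a uniform choice of $v$- and $v'$-generic polarization over $\CX_g$, so the relative moduli spaces $\CM_v,\CM_{v'}\to\CX_g$ are smooth and proper, and Markman's bundle, together with the relative Chern characters of the universal objects, is defined over $\CX_g$. Hence $\gamma$ and $\gamma'$ are generically defined, and so is every cycle obtained from them by composition or by intersecting with (small) diagonals.

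Set $\epsilon:=\gamma'\circ\gamma-\Delta_{M_{S,v}}\in\mathrm{CH}^{2n}(M_{S,v}\times M_{S,v})$ and $\epsilon':=\gamma\circ\gamma'-\Delta_{M_{S,v'}}$; by the previous step these are generically defined, and by Theorem~\ref{thm0.5} they are cohomologically trivial. Part~(a) is then equivalent to $\epsilon=\epsilon'=0$. For part~(b), the compatibility of $\gamma$ with the cup-product is encoded by the single cycle
\[
\mu:=\gamma\circ\delta_{M_{S,v}}-\delta_{M_{S,v'}}\circ(\gamma\otimes\gamma)\in\mathrm{CH}^{4n}(M_{S,v}\times M_{S,v}\times M_{S,v'}),
\]
where $\delta_{(-)}$ denotes the small-diagonal correspondence; it too is generically defined and cohomologically trivial (Theorem~\ref{thm0.5}), and part~(b) follows once $\epsilon=\epsilon'=\mu=0$, since a ring isomorphism automatically has a ring-isomorphism inverse and, as explained below, the $3n$-th Franchetta property implies the $2n$-th one.

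The heart of the argument is a \emph{relative} motivic decomposition: over $\CX_g$, the Chow motive $h(M_{S,v})$ is a direct summand of $\bigoplus_i h(S^{k_i})(c_i)$ with all $k_i\leq n$, and likewise for $h(M_{S,v'})$. This is B\"ulles's theorem, which generalizes de Cataldo--Migliorini's decomposition of $h(S^{[n]})$ into summands indexed by partitions of $n$; its only inputs are the (twisted) universal sheaf and Markman's description of the diagonal of $M_{S,v}$ in terms of the Chern classes of that sheaf, both available over $\CX_g$, so the splitting is realized by generically defined projectors. Tensoring, $h(M_{S,v})^{\otimes 2}$ is a relative summand of $\bigoplus h(S^{k})(\ast)$ with $k\leq 2n$, and $h(M_{S,v})^{\otimes 2}\otimes h(M_{S,v'})$ of $\bigoplus h(S^{k})(\ast)$ with $k\leq 3n$; hence $\mathrm{GDCH}^*(M_{S,v}\times M_{S,v})$ is a direct summand of $\bigoplus_{k\leq 2n}\mathrm{GDCH}^*(S^{k})$ and $\mathrm{GDCH}^*(M_{S,v}\times M_{S,v}\times M_{S,v'})$ of $\bigoplus_{k\leq 3n}\mathrm{GDCH}^*(S^{k})$, compatibly with the cycle class map. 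Since the Beauville--Voisin zero-cycle $o_S\in\mathrm{CH}_0(S)$ is generically defined, exterior product with powers of $o_S$ on the extra factors exhibits $\mathrm{GDCH}^*(S^{m})$ as a direct summand of $\mathrm{GDCH}^*(S^{m'})$ for $m\leq m'$, compatibly with the cycle class map; hence the $2n$-th (resp.\ $3n$-th) Franchetta property forces every generically defined cohomologically trivial cycle on $S^{m}$ with $m\leq 2n$ (resp.\ $m\leq 3n$) to vanish in $\mathrm{CH}^*$. Feeding the images of $\epsilon,\epsilon'$ (resp.\ of $\mu$) through the decomposition above therefore yields $\epsilon=\epsilon'=0$ (resp.\ $\mu=0$), which proves~(a) (resp.\ (b)).

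The main obstacle is making the relative/functorial statements precise, namely checking that Markman's hyperholomorphic bundle, the (twisted) universal sheaves and their Chern characters, and Markman's tautological presentation of the diagonal can all be carried out compatibly over the moduli stack $\CX_g$, so that both $\gamma,\gamma'$ and the motivic decomposition of $h(M_{S,v})$ are genuinely generically defined; granting this, the rest is formal diagram-chasing with correspondences and an application of Franchetta. The passage from $2n$ in~(a) to $3n$ in~(b) reflects exactly the extra $S$-factor contributed by the third leg of the multiplicativity constraint, and the role of $\Pic(S)=\BZ H$ is confined to the uniform choice of generic polarization over $\CX_g$ (together with Reiss's birational invariance of the Chow motive as an algebra object, if one wishes to dispose of the birational ambiguity in the definition of $M_{S,v}$).
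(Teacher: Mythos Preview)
Your strategy is correct and matches the paper's: build correspondences that realize the isomorphism of Theorem~\ref{thm0.5}, verify they are generically defined, then invoke the Franchetta property via the relative B\"ulles/de~Cataldo--Migliorini decomposition (this is exactly the paper's Theorem~\ref{thm3.2}, quoted from \cite{FLV2}) to promote the cohomological identities $\epsilon=\epsilon'=0$ and $\mu=0$ to Chow. The reduction to powers of $S$ and the $2n$/$3n$ accounting are right.

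The gap is the sentence ``Markman's bundle \ldots\ is defined over~$\CX_g$.'' This is neither obvious nor, as stated, known. Markman's bundle is produced by transcendental deformation along twistor paths from a special BKR-type bundle; there is no reason for this analytic construction to spread out as an algebraic family over a Zariski open in $\CX_g$. The paper's entire Section~\ref{sec2} (Theorem~\ref{thm_main2}) is devoted to circumventing this, and it does \emph{not} spread out the bundle. Instead it (i) shows the cohomological kappa class is monodromy invariant (Proposition~\ref{prop2.3}), (ii) passes to a generically finite cover to globalize the relevant $\mu_{2n-2}$-gerbe (Lemma~\ref{lem2.4}), (iii) forms the relative moduli stack of slope-stable twisted bundles with this fixed kappa class and proves it dominates the base by invoking Zhang's construction fiberwise (Proposition~\ref{prop2.7}), and (iv) averages the universal kappa class along the generically finite map. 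The output is a generically defined Chow class $\mathfrak{K}^{\mathrm{univ}}$ that matches $\kappa(\CE)$ \emph{only in cohomology on each fiber}; this is enough because every identity you need to lift is already cohomological. You flag this as ``the main obstacle'' at the end, which is accurate, but your earlier assertion that the bundle is defined over $\CX_g$ glosses over what is in fact the technical heart of the theorem. A secondary point you should address is that Zhang's equivalence lands on a birational model $M'_{S,v}$ rather than $M_{S,v}$ itself; the paper spreads out $M'_{S,v}$ via families of Bridgeland stability conditions (Proposition~\ref{prop2.1}) before appealing to \cite{Riess}.
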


\subsection{Idea of the proof}\label{sec0.3}
We first discuss the main difficulty in proving the Orlov conjecture. Assume that $X, X'$ are derived equivalent varieties of dimension $d$. The Chern characters of the Fourier--Mukai kernels $\CU, \CU^{-1}$ yield correspondences
\begin{equation}\label{corr0}
\mathfrak{F} \in \mathrm{CH}^*(X\times X'), \quad \mathfrak{F}^{-1}  \in \mathrm{CH}^*(X' \times X)
\end{equation}
satisfying
\[
\mathfrak{F}^{-1}\circ\mathfrak{F} = [\Delta_X], \quad \mathfrak{F}\circ\mathfrak{F}^{-1} = [\Delta_{X'}].
\]
The classes (\ref{corr0}) are \emph{a priori} of \emph{mixed} degree in the Chow groups. In order to prove (the homological or the Chow version of) the Orlov conjecture, we need to construct algebraic cycles of \emph{pure} degree
\begin{equation}\label{cor00}
[Z] \in \mathrm{CH}^d(X \times X'), \quad [Z'] \in \mathrm{CH}^d(X' \times X)
\end{equation}
with 
\begin{equation}\label{hom-Chow}
[Z']\circ[Z] = [\Delta_X] , \quad [Z]\circ[Z'] = [\Delta_{X'}].
\end{equation}
Here the identities (\ref{hom-Chow}) hold in cohomology or Chow, depending on the version of the Orlov conjecture one would like to prove. In general, it is not clear how to modify the cycles in (\ref{corr0}) of mixed degree to the desired~$d$-dimensional cycles in (\ref{cor00}).

Now we consider hyper-K\"ahler varieties of $K3^{[n]}$-type. We first observe that derived equivalent varieties $X,X'$ of $K3^{[n]}$-type are connected Hodge-theoretically; see Proposition \ref{D=>HI}. Therefore, both Theorems \ref{thm0.4} and \ref{thm0.5} are deduced from the following theorem.

\begin{thm}\label{thm0.8}
Let $X, X'$ be hyper-K\"ahler varieties of $K3^{[n]}$-type. If there is a (rational) Hodge isometry $H^2(X, \BQ) \simeq H^2(X', \BQ)$, then there is an isomorphism of homological motives which respects the cup-product,
\[
(h_{\mathrm{hom}}(X), \cup) \simeq (h_{\mathrm{hom}}(X'), \cup).
\]
\end{thm}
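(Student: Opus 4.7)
The plan is to build the desired cohomological correspondence from Markman's projectively hyperholomorphic bundle. Given the Hodge isometry $\phi \colon H^2(X, \BQ) \xrightarrow{\sim} H^2(X', \BQ)$, Markman's construction produces a (projective) bundle with underlying class $\CE$ on $X \times X'$ whose Chern character $\ch(\CE) = \sum_k \Fe_k$, with $\Fe_k \in \mathrm{CH}^k(X \times X')$, is a priori of mixed codimension. The projectively hyperholomorphic hypothesis ensures that each $\Fe_k$ is of Hodge type and --- more crucially --- remains algebraic along the twistor deformations of $X$ and $X'$, so that each $\Fe_k$ is monodromy-equivariant with respect to the Verbitsky--Looijenga--Lunts (LLV) action on cohomology.

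From the mixed-degree class $\ch(\CE)$ I would extract a pure middle-degree cycle $[Z] \in \mathrm{CH}^{2n}(X \times X')$, and similarly $[Z'] \in \mathrm{CH}^{2n}(X' \times X)$ from the dual bundle; the natural candidate is the $2n$-th graded piece of a Mukai-style renormalization of $\ch(\CE)$, chosen so that the cohomological operator $[Z]_\ast$ restricts on $H^2$ to the given isometry $\phi$. Using Verbitsky's structural description of the subring of $H^*(X, \BQ)$ generated by $H^2$, together with the LLV-equivariance of $[Z]_\ast$, one deduces that $[Z]_\ast$ is an isomorphism on this subring; extending to the full cohomology and verifying $[Z'] \circ [Z] = [\Delta_X]$ and $[Z] \circ [Z'] = [\Delta_{X'}]$ amounts to a rigidity argument in the monodromy-equivariant setting.

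For the cup-product compatibility I would use that the cohomology ring of a $K3^{[n]}$-type variety is generated, as a monodromy-equivariant algebra, by $H^2$ together with a fixed supply of canonical tautological classes (morally the Chern classes of the tangent bundle of $X$, or the extra generators beyond LLV identified by Markman). Since $[Z]$ is monodromy-equivariant by construction, $[Z]_\ast$ matches the tautological generators of $H^*(X, \BQ)$ with their counterparts on $X'$, so it respects multiplication once it is verified on generators and generating relations. The hard part will be exactly this pure-degree extraction together with the multiplicative verification: a mixed-degree Chern character does not naively split as a ring homomorphism, and the delicate point is choosing the correction polynomial in the $\Fe_k$ so that $[Z]_\ast$ becomes multiplicative. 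A natural fallback is to specialize to $X = X' = S^{[n]}$, where Markman's bundle reduces to a tautological Fourier--Mukai kernel and the multiplicative identities are transparent, and then transport the conclusion to the general pair via the universal deformation connecting any two Hodge-isometric $K3^{[n]}$-type varieties together with the monodromy-equivariance of each $\Fe_k$.
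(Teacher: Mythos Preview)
Your proposal has a structural gap at the very first extraction step. The kappa class (or Chern character) of Markman's hyperholomorphic bundle does \emph{not} induce a degree-preserving map on cohomology: by \cite[Lemma~4.1(1)]{Markman} (restated in the paper as Proposition~\ref{prop1.8}(a)) the associated Fourier transform $\FF$ is \emph{degree-reversing}, sending $H^i$ to $H^{4n-i}$. Concretely, the graded piece $\FF_{2k} \in \mathrm{CH}^{2k}$ acts nontrivially only as $H^{4n-2k} \to H^{2k}$, so your candidate $[Z] = \FF_{2n}$ acts only on the middle cohomology $H^{2n}$ and is identically zero on $H^2$. It therefore cannot restrict to the isometry $\phi$ on $H^2$, and no renormalization of the individual $\Fe_k$ repairs this: all of them are off-diagonal with respect to the cohomological grading. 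Relatedly, $\FF$ does not intertwine the cup-products on the two sides; it conjugates the cup-product on one side to Markman's \emph{convolution} product on the other (Proposition~\ref{prop1.8}(b)). Your ``matching of generators'' argument for multiplicativity presupposes a degree-preserving ring map, which you do not yet have.

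The paper's fix is precisely to compose two degree-reversing correspondences. One is $\widetilde{\FF}$, giving $(h_{\mathrm{hom}}(X),\cup)\simeq(\bigoplus_k h_{\mathrm{hom},4n-2k}(Y)(2n-2k),\ast)$. The second is a Fourier \emph{self}-transform $\FF_L = \exp(e_L)\exp(-f_L)\exp(e_L)$ on $Y$, built from the Lefschetz $\mathfrak{sl}_2$-triple; the algebraicity of $f_L$ is the Lefschetz standard conjecture for $K3^{[n]}$-type, itself proved via Markman's bundle. This $\FF_L$ is again degree-reversing and again conjugates $\cup$ to $\ast$ on $Y$ (Proposition~\ref{prop:ffauto}, using \cite[Lemma~10.4]{Markman}). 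The composite $\sum_k \widetilde{\FF}_{2k}\circ \widetilde{\FF}^{-1}_{L,4n-2k}$ is then the genuine middle-degree, cup-product-preserving correspondence you were looking for. Finally, the paper does not get a bundle directly on $X\times X'$; it invokes Markman's chain decomposition (Theorem~\ref{thm:classify}) to reduce to a sequence of such steps interspersed with parallel transports, the latter handled by global Torelli and Riess's birational invariance. Your proposal does not address this reduction either.
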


The proofs of the main theorems rely on a Fourier transform package discussed in Section~\ref{sec1} for hyper-K\"ahler varieties of~$K3^{[n]}$-type, which in cohomology is largely a reformulation of results of Markman \cite{Markman}. We prove Theorems \ref{thm0.4}, \ref{thm0.5}, and \ref{thm0.8} in Section \ref{New_Sec2}.


In Section \ref{New_Sec4}, we show that the isomorphism in Theorem \ref{thm0.5} can be constructed from generically defined cycles when the $K3$ surface is of Picard rank $1$, thereby proving Theorem~\ref{thm0.7}. The key is an algebraic moduli approach to a recent construction of Zhang \cite{Zhang} regarding moduli spaces of stable sheaves, which we discuss in Section~\ref{sec2}.


\subsection{Acknowledgements}
We are grateful to Eyal Markman and Ruxuan Zhang for helpful discussions.

D.M.~was supported by a Simons Investigator Grant.
J.S.~was supported by the NSF grant DMS-2301474 and a Sloan Research Fellowship.

\section{Fourier transforms via hyperholomorphic bundles}\label{sec1}

Conjectures of Beauville and Voisin \cite{B2, Voi} predict that the Chow motive of a hyper-K\"ahler variety behaves analogously to that of an abelian variety. The purpose of Section \ref{sec1} is to make this analogy more explicit from the perspective of \emph{Fourier transforms}. Although most of the discussion in this section is conjectural, the construction introduced here plays a crucial role in the proofs of the main theorems. 

\subsection{Abelian varieties}\label{sec1.1}
We first briefly review the theory of Fourier transforms in the study of algebraic cycles on abelian varieties \cite{B,DM}. 

Let $A$ be a $g$-dimensional abelian variety, and let $A^\vee:= \mathrm{Pic}^0(A)$ be its dual. The normalized Poincar\'e line bundle $\CL$ on $A^\vee\times A$ induces a derived equivalence of coherent categories~\cite{Mukai}, which further induces algebraic correspondences, known as the Fourier transforms
\begin{equation*}
\mathfrak{F}:= \mathrm{ch}(\CL) \in \mathrm{CH}^*(A^\vee \times A), \quad \mathfrak{F}^{-1}:= \mathrm{ch}(\CL^\vee) \in \mathrm{CH}^*(A\times A^\vee).
\end{equation*}
We set
\[
\mathfrak{p}_k:= \mathfrak{F}_k\circ  \mathfrak{F}^{-1}_{2g-k} \in \mathrm{CH}^g(A\times A), \quad \mathfrak{p}^\vee_k:= \mathfrak{F}^{-1}_k\circ  \mathfrak{F}_{2g-k} \in \mathrm{CH}^g(A^\vee\times A^\vee),\quad 0\leq k \leq 2g,
\]
where $\mathfrak{F}_i \in\mathrm{CH}^i(A^\vee \times A)$ and $\mathfrak{F}^{-1}_j\in\mathrm{CH}^j(A \times A^\vee)$ are the degree $i$ and degree $j$ parts of $\mathfrak{F}$ and $\mathfrak{F}^{-1}$ respectively. Since $\mathfrak{F}, \mathfrak{F}^{-1}$ are obtained from Fourier--Mukai equivalences, we obtain decompositions of the diagonal classes
\begin{equation}\label{Ch_Ku_Ab0}
[\Delta_A] = \sum_{k=0}^{2g} \mathfrak{p}_k \in \mathrm{CH}^g(A \times A), \quad  [\Delta_{A^\vee}] = \sum_{k=0}^{2g} \mathfrak{p}^\vee_k \in \mathrm{CH}^g(A^\vee \times A^\vee).\end{equation}
A key observation of \cite{B,DM} is that the decompositions (\ref{Ch_Ku_Ab0}) yield \emph{Fourier-stable multiplicative} Chow--K\"unneth decompositions
\begin{align}
h(A) &= \bigoplus_{k=0}^{2g} h_k(A),\quad h_k(A)=(A, \mathfrak{p}_k,0),\label{decomp1}\\ \quad h(A^\vee) &= \bigoplus_{k=0}^{2g} h_k(A^\vee),\quad h_k(A^\vee)=(A^\vee, \mathfrak{p}^\vee_k,0). \label{decomp2}
\end{align}
More precisely, the decompositions (\ref{decomp1}) and (\ref{decomp2}) respect the cup-product. Furthermore, in the category of Chow motives, the graded algebra object
\[
\bigoplus_{k=0}^{2g} h_k(A), \quad \cup: h_i(A) \times h_{j}(A) \to h_{i+j}(A)
\]
given by the cup-product is isomorphic through the Fourier transform $\mathfrak{F}$ to the graded algebra object (with appropriate Tate-twists)
\[
\bigoplus_{k=0}^{2g} h_{2g-k}(A^\vee)(g - k), \quad \ast: h_{2g-i}(A^\vee)(g - i) \times h_{2g-j}(A^\vee)(g - j) \to h_{2g-i-j}(A^\vee)(g - i - j)
\]
given by the convolution product induced by the abelian group structure $+: A^\vee\times A^\vee \to A^\vee$.

This package relies heavily on special geometric ingredients of abelian varieties, including the normalized Poincar\'e line bundle and the group structure.

\subsection{Hyper-K\"ahler varieties}\label{sec1.2}
A Fourier transform package similar to the one in Section~\ref{sec1.1} for abelian varieties is expected to hold for hyper-K\"ahler varieties. We provide here an explicit proposal for varieties of~$K3^{[n]}$-type. Our proposal is built on the recently constructed projectively hyperholomorphic bundles of Markman \cite{Markman} which we briefly review as follows; see also \cite[Section 1]{MSYZ}. 

Consider a projective $K3$ surface $S_0$ with $\mathrm{Pic}(S_0)=\BZ H$. Assume that the Mukai vector
\[
v_0:=(r, mH, s) \in \widetilde{H}(S_0, \BZ), \quad \mathrm{gcd}(r,s)=1
\]
is isotropic (\emph{i.e.}~$v_0^2=0$), and that all $H$-stable sheaves on $S_0$ with Mukai vector $v_0$ are stable vector bundles. Let $M_{S_0,v_0}$ be the moduli space of such stable vector bundles. Then~$M_{S_0,v_0}$ is again a $K3$ surface with a universal rank $r$ bundle $\CU$ on $M_{S_0,v_0} \times S_0$. Conjugating the Bridgeland--King--Reid (BKR) correspondence \cite{BKR} yields a projectively hyperholomorphic vector bundle $\CU^{[n]}$ on $M_{S_0,v_0}^{[n]} \times S_0^{[n]}$ of rank
\[
\mathrm{rk}(\CU^{[n]}) = n!r^n,
\]
which induces a derived equivalence
\begin{equation*}
\Phi_{\CU^{[n]}}: D^b(M_{S_0,v_0}^{[n]}) \xrightarrow{\simeq} D^b(S_0^{[n]}).
\end{equation*}

Now let $X$ be any hyper-K\"ahler variety of $K3^{[n]}$-type. There always exists a variety $Y$ of~$K3^{[n]}$-type and a twisted vector bundle $(\CE,\alpha_\CE)$ on $Y\times X$, such that this twisted vector bundle is deformed from $\CU^{[n]}$ and induces a (twisted) derived equivalence
\begin{equation}\label{FM}
\Phi_{(\CE,\alpha_\CE)}: D^b(Y, \alpha_Y) \xrightarrow{\simeq} D^b(X, \alpha_X)
\end{equation}
with $\alpha_X, \alpha_Y$ certain Brauer classes on $X, Y$ respectively. We denote by $(\CE^{-1}, \alpha_{\CE^{-1}})$ the (shifted) twisted vector bundle given by the kernel of the inverse of (\ref{FM}). Note that even for fixed $X$, $S_0$, and $v_0$ (which determines the $K3$ surface $M_{S_0,v_0}$ uniquely), the choice of $Y$ and $(\CE, \alpha_\CE)$ may not be unique.

We consider the \emph{kappa class} (see \cite[Section 2]{BB} or \cite[Definition 1.3]{Markman}) associated with the twisted vector bundle $(\CE, \alpha_\CE)$. For the twisted vector bundle $\CE$, the vector bundle 
\[
\CE^{\otimes \mathrm{rank}(\CE)} \otimes \mathrm{det}(\CE)^{-1}
\]
is untwisted, and the kappa class
\[
\kappa(\CE) \in \mathrm{CH}^*(Y \times X)
\]
is defined to be the $\mathrm{rank}(\CE)$-th root of the Chern character of $\CE^{\otimes \mathrm{rank}(\CE)} \otimes \mathrm{det}(\CE)^{-1}$. We define~$\kappa(\CE^{-1})$ similarly; they yield Fourier transforms
\[
\mathfrak{F}:= \kappa(\CE)\sqrt{\mathrm{td}_{Y\times X}} \in \mathrm{CH}^*(Y \times X),\quad  \mathfrak{F}^{-1}:= \kappa(\CE^{-1})\sqrt{\mathrm{td}_{X\times Y}} \in \mathrm{CH}^*(X\times Y).
\]
Since $(\CE^{-1}, \alpha_{\CE^{-1}})$ is the kernel for the Fourier--Mukai inverse of (\ref{FM}), the relations
\[
[\Delta_X] = \mathfrak{F}\circ \mathfrak{F}^{-1} \in \mathrm{CH}^{2n}(X \times X), \quad [\Delta_Y] = \mathfrak{F}^{-1}\circ \mathfrak{F} \in \mathrm{CH}^{2n}(Y \times Y)
\]
are obtained from the Grothendieck--Riemann--Roch theorem. In other words, the Fourier transforms $\mathfrak{F}, \mathfrak{F}^{-1}$ induce mutually inverse isomorphisms of \emph{ungraded} Chow motives
\begin{equation} \label{iso_Chow_XY}
h(Y)\xrightleftharpoons[~~~\mathfrak{F}^{-1}~~~]{~~~\mathfrak{F}~~~} h(X), \quad  \mathfrak{F}\circ \mathfrak{F}^{-1} = \mathrm{id}_{h(X)}, \quad  \mathfrak{F}^{-1}\circ \mathfrak{F} = \mathrm{id}_{h(Y)}.
\end{equation}

\begin{rmk}
    For $X, Y$ as above, a morphism of Chow motives $h(X) \to h(Y)$ is induced by a correspondence in $\mathrm{CH}^{2n}(X\times Y)$. Here, for a morphism of ungraded Chow motives, we allow the correspondence to take value in the \emph{ungraded} Chow group $\mathrm{CH}^*(X\times Y)$. As we discussed in Section \ref{sec0.3}, a key difficulty in studying the interaction between Chow motives and derived categories is to capture the information of the grading from the na{\"\i}ve ungraded isomorphisms~(\ref{iso_Chow_XY}). In our setting, this requires careful control of the Fourier transforms associated with hyper-K\"ahler varieties of $K3^{[n]}$-type, which we discuss in the next section.\end{rmk}

\subsection{A Fourier transform package}\label{sec1.3}
Consider any $X,Y,\CE, \CE^{-1}, \mathfrak{F}, \mathfrak{F}^{-1}$ as in Section \ref{sec1.2}. We describe in this section a conjectural Fourier transform package. For integers $0 \leq i, j \leq 4n$, let $\mathfrak{F}_i \in\mathrm{CH}^i(Y \times X)$ and $\mathfrak{F}^{-1}_j\in\mathrm{CH}^j(X \times Y)$ denote the degree $i$ and degree $j$ parts of $\mathfrak{F}$ and~$\mathfrak{F}^{-1}$ respectively.

\begin{conj}[Fourier vanishing]\label{conj1.1}
The following hold for $\mathfrak{F}, \mathfrak{F}^{-1}$.
\begin{enumerate}
\item[(a)] For any integers $0\leq i,j < 2n$ we have
\[
\mathfrak{F}_{2i + 1} = 0\in\mathrm{CH}^{2i + 1}(Y \times X), \quad \mathfrak{F}^{-1}_{2j + 1} = 0\in\mathrm{CH}^{2j + 1}(X \times Y).
\]

\item[(b)] For any integers $0\leq i,j\leq 2n$ with $i+j \neq 2n$, we have
\[
\mathfrak{F}_{2i} \circ \mathfrak{F}^{-1}_{2j}= 0 \in \mathrm{CH}^{2i+2j-2n}(X \times X), \quad  \mathfrak{F}^{-1}_{2i} \circ \mathfrak{F}_{2j}= 0 \in \mathrm{CH}^{2i+2j-2n}(Y \times Y).\]
\end{enumerate}
\end{conj}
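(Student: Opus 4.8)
The plan is to prove Conjecture~\ref{conj1.1} in cohomology first --- which is what the later sections actually use --- by importing Markman's analysis \cite{Markman} of the kappa class of a projectively hyperholomorphic bundle, and then to isolate what extra (conjectural) input is needed for the statement in $\mathrm{CH}^*$. Throughout I use that $X,Y$ are holomorphic symplectic, so the symplectic form identifies $T_X\cong\Omega^1_X$; hence $c_{\mathrm{odd}}(T_X)=c_{\mathrm{odd}}(T_Y)=0$ and $\sqrt{\mathrm{td}_{Y\times X}}$ is a unit concentrated in even degrees. Consequently $\mathfrak{F}_{\mathrm{odd}}=0\iff\kappa(\CE)_{\mathrm{odd}}=0$ (and likewise for $\mathfrak{F}^{-1}$), which reduces part~(a) to the vanishing of the odd-degree components of the kappa classes. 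Cohomologically this is automatic: $H^{\mathrm{odd}}(X,\BQ)=H^{\mathrm{odd}}(Y,\BQ)=0$ for varieties of $K3^{[n]}$-type, so $H^{\mathrm{odd}}(Y\times X,\BQ)=0$. In $\mathrm{CH}^*$ one would instead use Markman's description of $\kappa(\CE)$ as a universal polynomial in the (even) Chern classes of $X$ and $Y$ and in the Beauville--Bogomolov-type classes attached to the derived equivalence, all of even degree.

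For part~(b), start from the identities $\mathfrak{F}\circ\mathfrak{F}^{-1}=[\Delta_X]$ and $\mathfrak{F}^{-1}\circ\mathfrak{F}=[\Delta_Y]$. Since $\mathfrak{F}_{2i}\circ\mathfrak{F}^{-1}_{2j}\in\mathrm{CH}^{2i+2j-2n}(X\times X)$ while $[\Delta_X]$ is pure of codimension $2n$, comparing codimensions yields, for every $s\neq 2n$, the relation $\sum_{i+j=s}\mathfrak{F}_{2i}\circ\mathfrak{F}^{-1}_{2j}=0$; the real content is to promote this to the vanishing of each individual summand. The key structural input is that, by Markman, the cohomological transforms $\mathfrak{F},\mathfrak{F}^{-1}$ are equivariant for the Looijenga--Lunts--Verbitsky Lie algebra actions through the Hodge isometry $\widetilde\Phi\colon\widetilde H(Y)_\BQ\xrightarrow{\ \sim\ }\widetilde H(X)_\BQ$ induced by~\eqref{FM}. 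Grading the LLV algebra by the adjoint action of the grading operator, each cohomologically homogeneous piece $\mathfrak{F}_{2i}$ acquires a second, weight homogeneity, and Markman's description pins $\mathfrak{F}_{2i}$ down inside the (small) space of LLV-equivariant classes of that cohomological degree and weight. Composing $\mathfrak{F}_{2i}$ with $\mathfrak{F}^{-1}_{2j}$, the cohomological degrees and the weights both add; matching the result against the K\"unneth projectors $\pi^X_k$, which are the only constituents of $[\Delta_X]$, forces $i+j=2n$, and every other $(i,j)$ gives zero. This is the precise analogue of the abelian-variety mechanism of \cite{B,DM}, where $\mathfrak{F}_i$ has pure K\"unneth bidegree $(i,i)$ because $\mathfrak{F}=\exp(\ell)$ with $\ell$ of bidegree $(1,1)$, and $\mathfrak{F}_i\circ\mathfrak{F}^{-1}_j$ vanishes for $i+j\neq 2g$ by incompatibility of the middle-factor degrees.

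I expect the genuine obstacle to be the Chow-theoretic statement as phrased. The bookkeeping above takes place in cohomology; lifting the vanishings $\mathfrak{F}_{2i}\circ\mathfrak{F}^{-1}_{2j}=0$ ($i+j\neq 2n$) to $\mathrm{CH}^*(X\times X)$ would require a multiplicative Chow--K\"unneth decomposition of $X\times X$ under which a cycle cohomologically trivial in a codimension $\neq 2n$ is rationally trivial --- in effect the Beauville--Voisin conjectural package for hyper-K\"ahler varieties, which is why Section~\ref{sec1} is stated conjecturally. I would therefore prove the cohomological version unconditionally via Markman, and keep the Chow version conditional; for the moduli spaces $M_{S,v},M_{S,v'}$ of Theorems~\ref{thm0.5} and~\ref{thm0.7} one can hope to sidestep the full conjecture by exhibiting the relevant cycles as generically defined and invoking the Franchetta-type hypotheses, as carried out in Section~\ref{New_Sec4}.
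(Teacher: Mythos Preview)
The statement is a conjecture in Chow; the paper does not prove it, but gives the cohomological analogue as Proposition~\ref{prop1.8}. Your overall plan --- prove the cohomological version via Markman, keep Chow conditional --- matches the paper. However, your cohomological argument for part~(a) is wrong, and your argument for~(b) misses the one ingredient that makes the whole thing immediate.

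For~(a) you argue: $H^{\mathrm{odd}}(X,\BQ)=H^{\mathrm{odd}}(Y,\BQ)=0$, hence $H^{\mathrm{odd}}(Y\times X,\BQ)=0$, hence $\mathfrak{F}_{2i+1}=0$. But $\mathfrak{F}_{2i+1}\in\mathrm{CH}^{2i+1}(Y\times X)$ has cycle class in $H^{4i+2}(Y\times X,\BQ)$, which is an \emph{even} cohomological degree and is typically nonzero (by K\"unneth it contains, e.g., $H^2(Y,\BQ)\otimes H^{4i}(X,\BQ)$). So vanishing of odd cohomology of the product is irrelevant here.

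The paper's argument is both correct and much simpler, and it disposes of~(a) and~(b) simultaneously. The single input is \cite[Lemma~4.1(1)]{Markman}: the correspondence $\mathfrak{F}$ is \emph{degree-reversing}, i.e.\ it maps $H^k(Y,\BQ)$ to $H^{4n-k}(X,\BQ)$. Since the graded piece $\mathfrak{F}_m$ (viewed as a correspondence) shifts cohomological degree by $2m-4n$, the degree-reversing property forces $\mathfrak{F}_m|_{H^k(Y,\BQ)}=0$ unless $m=4n-k$. Now $H^{\mathrm{odd}}(Y,\BQ)=0$ gives $\mathfrak{F}_{2i+1}=0$ at once. For~(b), the same observation says $\mathfrak{F}^{-1}_{2j}$ has image contained in $H^{2j}(Y,\BQ)$, on which $\mathfrak{F}_{2i}$ is nonzero only if $2i=4n-2j$; hence $\mathfrak{F}_{2i}\circ\mathfrak{F}^{-1}_{2j}=0$ whenever $i+j\neq 2n$, term by term, with no need to separate a sum or invoke LLV weight bookkeeping. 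Your ``second weight homogeneity'' is groping toward exactly this K\"unneth-bidegree constraint, but you never state the degree-reversing property that pins it down; as written, your mechanism does not distinguish the individual summands $\mathfrak{F}_{2i}\circ\mathfrak{F}^{-1}_{2j}$ with the same value of $i+j$.
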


\begin{rmk}
    The analogous relations in Conjecture \ref{conj1.1}(b) for an abelian variety are a consequence of the compatibility between the ``multiplication by $N$'' map and the normalized Poincar\'e line bundle; \emph{c.f.}~\cite{DM}.
\end{rmk}

Exactly as in the abelian variety case, we set
\begin{equation}\label{projectors}
\mathfrak{p}_{2k}:= \mathfrak{F}_{2k}\circ  \mathfrak{F}^{-1}_{4n-2k} \in \mathrm{CH}^{2n}(X\times X), \quad \mathfrak{p}^\vee_{2k}:= \mathfrak{F}^{-1}_{2k}\circ  \mathfrak{F}_{4n-2k} \in \mathrm{CH}^{2n}(Y\times Y),\quad 0\leq k \leq 2n.
\end{equation}  
Conjecture \ref{conj1.1}(a) yields decompositions
\begin{equation}\label{CK}
[\Delta_X] = \sum_{k=0}^{2n} \mathfrak{p}_{2k} \in \mathrm{CH}^{2n}(X \times X), \quad  [\Delta_Y] = \sum_{k=0}^{2n} \mathfrak{p}^\vee_{2k} \in \mathrm{CH}^{2n}(Y \times Y)
\end{equation}
as in (\ref{Ch_Ku_Ab0}).

\begin{prop}\label{prop1.3}
    Assume that Conjecture \ref{conj1.1} holds. Then we have the following.
    \begin{enumerate}
        \item[(a)] (Motivic decompositions) The classes \eqref{projectors} form orthogonal projectors, \emph{i.e.},
\[
\Fp_{2i} \circ \Fp_{2j} = \begin{cases} \Fp_{2i} & i = j \\ 0 & i \neq j, \end{cases} \quad
\Fp^\vee_{2i} \circ \Fp^\vee_{2j} = \begin{cases} \Fp^\vee_{2i} & i = j \\ 0 & i \neq j. \end{cases}
\]
In particular, the identities in \eqref{CK} induce Chow--K\"unneth decompositions
\begin{align*} 
h(X) &= \bigoplus_{k=0}^{2n} h_{2k}(X),\quad h_{2k}(X)=(X, \mathfrak{p}_{2k},0), \\
\quad h(Y) &= \bigoplus_{k=0}^{2n} h_{2k}(Y),\quad h_{2k}(Y)=(Y, \mathfrak{p}^\vee_{2k},0). 
\end{align*}
\item[(b)] (Fourier stability) For integers $i, j, k$ with $0 \leq i,j \leq 2n$, the restriction--projection of~$\mathfrak{F}_{2k}$ to
\[
\mathfrak{F}_{2k}: h_{2i}(Y)(2n - 2k) \to h_{2j}(X)
\]
is nonzero only if $j = k = 2n - i$. Similarly, the restriction--projection of $\mathfrak{F}^{-1}_{2k}$ to
\[
\mathfrak{F}^{-1}_{2k}: h_{2i}(X) \to h_{2j}(Y)(2k - 2n)
\]
is nonzero only if $j = k = 2n - i$. In particular, \eqref{iso_Chow_XY} induces mutually inverse isomorphisms of Chow motives
\[
h_{4n-2k}(Y)(2n - 2k) \xrightleftharpoons[~~~\mathfrak{F}^{-1}_{4n-2k}~~~]{~~~\mathfrak{F}_{2k}~~~} h_{2k}(X).
\]
\end{enumerate}
\end{prop}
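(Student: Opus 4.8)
Granting Conjecture~\ref{conj1.1}, the proposition is a purely formal consequence of correspondence calculus applied to the two defining identities $\mathfrak{F}\circ\mathfrak{F}^{-1}=[\Delta_X]$ and $\mathfrak{F}^{-1}\circ\mathfrak{F}=[\Delta_Y]$; the only thing demanding genuine care is the bookkeeping of Chow degrees and Tate twists. I would begin by recording the basic computation. By Conjecture~\ref{conj1.1}(a) we may write $\mathfrak{F}=\sum_{k=0}^{2n}\mathfrak{F}_{2k}$ and $\mathfrak{F}^{-1}=\sum_{k=0}^{2n}\mathfrak{F}^{-1}_{2k}$; expanding $[\Delta_X]=\mathfrak{F}\circ\mathfrak{F}^{-1}$ and $[\Delta_Y]=\mathfrak{F}^{-1}\circ\mathfrak{F}$ into graded pieces and discarding the terms killed by Conjecture~\ref{conj1.1}(b) leaves only the summands of complementary degree, which after comparison with~\eqref{projectors} yields the refined identities
\[
\mathfrak{F}_{2i}\circ\mathfrak{F}^{-1}_{2j}=\delta_{i+j,\,2n}\,\mathfrak{p}_{2i},\qquad \mathfrak{F}^{-1}_{2i}\circ\mathfrak{F}_{2j}=\delta_{i+j,\,2n}\,\mathfrak{p}^\vee_{2i}
\]
($\delta$ the Kronecker delta), and summing over $i$ reproves the decompositions~\eqref{CK}.

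\textbf{Part (a).} For $i\neq j$, associativity of composition and the second identity above give $\mathfrak{p}_{2i}\circ\mathfrak{p}_{2j}=\mathfrak{F}_{2i}\circ(\mathfrak{F}^{-1}_{4n-2i}\circ\mathfrak{F}_{2j})\circ\mathfrak{F}^{-1}_{4n-2j}=0$, because $(2n-i)+j\neq 2n$; symmetrically $\mathfrak{p}^\vee_{2i}\circ\mathfrak{p}^\vee_{2j}=0$. The point I would stress is that idempotency then costs nothing extra: from~\eqref{CK} and the orthogonality just proved,
\[
\mathfrak{p}_{2i}=\mathfrak{p}_{2i}\circ[\Delta_X]=\sum_{j=0}^{2n}\mathfrak{p}_{2i}\circ\mathfrak{p}_{2j}=\mathfrak{p}_{2i}\circ\mathfrak{p}_{2i},
\]
and likewise for $\mathfrak{p}^\vee_{2i}$. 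Thus $\{\mathfrak{p}_{2k}\}$ and $\{\mathfrak{p}^\vee_{2k}\}$ are mutually orthogonal idempotents summing to the respective diagonals, so in the pseudo-abelian category of Chow motives they cut out the summands $h_{2k}(X)=(X,\mathfrak{p}_{2k},0)$ and $h_{2k}(Y)=(Y,\mathfrak{p}^\vee_{2k},0)$. That these realize to the Künneth components in cohomology---so the decomposition is genuinely Chow--Künneth, with the correct indexing by~$2k$---I would deduce from Markman's computation of the cohomological Fourier transform~\cite{Markman}, the hyper-K\"ahler counterpart of Beauville's fact that on an abelian variety the Fourier transform interchanges $H^{j}$ and $H^{2g-j}$.

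\textbf{Part (b).} Two further quick consequences of the refined identities are $\mathfrak{p}_{2j}\circ\mathfrak{F}_{2k}=0$ for $j\neq k$ (indeed $\mathfrak{p}_{2j}\circ\mathfrak{F}_{2k}=\mathfrak{F}_{2j}\circ(\mathfrak{F}^{-1}_{4n-2j}\circ\mathfrak{F}_{2k})$, whose inner factor vanishes unless $j=k$) and $\mathfrak{F}_{2k}\circ\mathfrak{p}^\vee_{2i}=0$ for $i\neq 2n-k$; combined with $\mathfrak{F}_{2k}\circ[\Delta_Y]=\mathfrak{F}_{2k}=[\Delta_X]\circ\mathfrak{F}_{2k}$ and~\eqref{CK}, these give $\mathfrak{F}_{2k}=\mathfrak{p}_{2k}\circ\mathfrak{F}_{2k}\circ\mathfrak{p}^\vee_{4n-2k}$. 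Hence the restriction--projection of $\mathfrak{F}_{2k}$, namely $\mathfrak{p}_{2j}\circ\mathfrak{F}_{2k}\circ\mathfrak{p}^\vee_{2i}$, is zero unless $j=k=2n-i$, in which case it equals $\mathfrak{F}_{2k}$---that is, $\mathfrak{F}_{2k}$ is a well-defined morphism $h_{4n-2k}(Y)(2n-2k)\to h_{2k}(X)$; the claim for $\mathfrak{F}^{-1}_{2k}$ is the mirror image. Finally, the refined identities directly give $\mathfrak{F}_{2k}\circ\mathfrak{F}^{-1}_{4n-2k}=\mathfrak{p}_{2k}$ and $\mathfrak{F}^{-1}_{4n-2k}\circ\mathfrak{F}_{2k}=\mathfrak{p}^\vee_{4n-2k}$, which by part~(a) are the identity endomorphisms of $h_{2k}(X)$ and of $h_{4n-2k}(Y)(2n-2k)$ respectively; so $\mathfrak{F}_{2k}$ and $\mathfrak{F}^{-1}_{4n-2k}$ are mutually inverse isomorphisms of Chow motives.

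\textbf{Where the difficulty lies.} Given Conjecture~\ref{conj1.1} there is no deep obstacle---the argument is a couple of pages of correspondence algebra---and the only places demanding attention are the composition conventions and the degree/Tate-twist bookkeeping in part~(b). If I had to single out the load-bearing steps, they are: extracting the two vanishing relations in part~(a) from Conjecture~\ref{conj1.1}(b) (idempotency, and all of part~(b), then follow formally), and importing from~\cite{Markman} the cohomological input that fixes the Künneth labeling of the projectors $\mathfrak{p}_{2k}$.
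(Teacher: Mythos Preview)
Your proof is correct and follows essentially the same route as the paper's: both arguments unpack $\mathfrak{p}_{2i}\circ\mathfrak{p}_{2j}$ via associativity and kill the inner factor $\mathfrak{F}^{-1}_{4n-2i}\circ\mathfrak{F}_{2j}$ using Conjecture~\ref{conj1.1}(b), then handle part~(b) by the same device applied to $\mathfrak{p}_{2j}\circ\mathfrak{F}_{2k}\circ\mathfrak{p}^\vee_{2i}$, and both defer the K\"unneth labeling to Markman's cohomological computation (the paper cites this as Proposition~\ref{prop1.8}(a)). The only cosmetic difference is that the paper proves idempotency first and says orthogonality is similar, whereas you prove orthogonality first and deduce idempotency from it together with~\eqref{CK}; these are the same argument in a different order.
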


\begin{proof}
We first show that the $\mathfrak{p}_{2k}$ form orthogonal projectors. For any $k$, we have by (\ref{CK}) that
    \begin{gather*}
\Fp_{2k} = [\Delta_X] \circ \Fp_{2k} = \left(\sum_{i = 0}^{2n}\mathfrak{F}_{2i} \circ \mathfrak{F}^{-1}_{4n - 2i} \right) \circ \left(\mathfrak{F}_{2k} \circ \mathfrak{F}^{-1}_{4n - 2k} \right) \\ = \sum_{i =0}^{2n}\mathfrak{F}_{2i} \circ (\mathfrak{F}^{-1}_{4n - 2i} \circ \mathfrak{F}_{2k}) \circ \mathfrak{F}^{-1}_{4n - 2k} = \Fp_{2k} \circ \Fp_{2k}, 
\end{gather*}
where we have used the vanishing
\[
\mathfrak{F}^{-1}_{4n - 2i} \circ \mathfrak{F}_{2k} =0,\quad i\neq k
\]
obtained from Conjecture \ref{conj1.1}(b) in the last identity. The other desired relations
\[
\mathfrak{p}_{2i}\circ \mathfrak{p}_{2j}=0,\quad \mathfrak{p}_{2k}^\vee= \mathfrak{p}^\vee_{2k}\circ \mathfrak{p}_{2k}^\vee, \quad \mathfrak{p}^\vee_{2i}\circ \mathfrak{p}^\vee_{2j}=0, \quad i\neq j
\]
are obtained similarly. This proves (a) provided that the motivic decompositions of $h(X), h(Y)$ realize the K\"unneth decompositions in cohomology. We refer the cohomological statement to Proposition~\ref{prop1.8}(a) below.

By definition, the first restriction--projection in (b) is induced by the correspondence
\begin{equation} \label{eq:fs1}
\Fp_{2j} \circ \mathfrak{F}_{2k} \circ \Fp^\vee_{2i} = \mathfrak{F}_{2j} \circ (\mathfrak{F}^{-1}_{4n - 2j} \circ \mathfrak{F}_{2k}) \circ \Fp^\vee_{2i}.
\end{equation}
Applying Conjecture \ref{conj1.1}(b) to the right-hand side, we see that \eqref{eq:fs1} is nonzero only if $j = k$, in which case we find
\[
\mathfrak{F}_{2j} \circ \mathfrak{F}^{-1}_{4n - 2j} \circ \mathfrak{F}_{2j} \circ \Fp^\vee_{2i} = \mathfrak{F}_{2j} \circ \Fp^\vee_{4n - 2j} \circ \Fp^\vee_{2i}.
\]
Furthermore, the pairwise orthogonality of the $\Fp_{2i}^\vee$ from (a) (which again relies on Conjecture~\ref{conj1.1}) shows that \eqref{eq:fs1} is nonzero only if $j = k = 2n - i$. This proves the first claim of (b). The remaining of (b) follows similarly.
\end{proof}


In order to study the interaction between the cup-product and the Fourier transform, we need to normalize $\mathfrak{F}, \mathfrak{F}^{-1}$ as follows; this is crucial in view of Proposition \ref{prop1.8}(b) below.

Recall that the rank of $\CE$ is $n!r^n$. We set the \emph{normalized} Fourier transforms as 
\begin{equation}\label{normal}
\begin{gathered}
\widetilde{\mathfrak{F}} = \sum_{k} \widetilde{\mathfrak{F}}_{2k}, \quad \widetilde{\mathfrak{F}}_{2k}: = r^{k-n}\mathfrak{F}_{2k}, \\
\widetilde{\mathfrak{F}}^{-1} = \sum_{k} \widetilde{\mathfrak{F}}^{-1}_{2k}, \quad \widetilde{\mathfrak{F}}^{-1}_{2k}:= r^{k-n}\mathfrak{F}_{2k}.
\end{gathered}
\end{equation}
Clearly, under Conjecture \ref{conj1.1}(a), they still induce isomorphisms of ungraded Chow motives 
\[
h(Y)\xrightleftharpoons[~~~\widetilde{\mathfrak{F}}^{-1}~~~]{~~~\widetilde{\mathfrak{F}}~~~} h(X),\quad
\widetilde{\mathfrak{F}}\circ \widetilde{\mathfrak{F}}^{-1} = \mathrm{id}_{h(X)}, \quad  \widetilde{\mathfrak{F}}^{-1}\circ \widetilde{\mathfrak{F}} = \mathrm{id}_{h(Y)},
\]
and the statements of Proposition \ref{prop1.3} hold identically if we replace $\mathfrak{F},\mathfrak{F}^{-1}$ by $\widetilde{\mathfrak{F}}, \widetilde{\mathfrak{F}}^{-1}$.

Following \cite[Section 10]{Markman}, we consider the convolution class
\[
\mathfrak{C}:=  \widetilde{\mathfrak{F}}^{-1} \circ [\Delta_X^{\mathrm{sm}}] \circ \left(\widetilde{\mathfrak{F}}\times\widetilde{\mathfrak{F}}\right) \in \mathrm{CH}^*(Y\times Y \times Y),
\]
and its induced \emph{ungraded} morphism
\begin{equation*}\label{convolution}
\ast: h(Y) \times h(Y) \to h(Y).
\end{equation*}
Here $[\Delta^{\mathrm{sm}}_X]$ is the class of the small diagonal $X\subset X \times X \times X$. The class $\mathfrak{C}$ is \emph{a priori} of mixed degree and depends on the variety $X$ and the twisted vector bundle $\CE$ on $Y \times X$. We conjecture that it is in fact of pure degree and intrinsic to $Y$.

\begin{conj}[Convolution]\label{conj1.4} The following hold for the convolution class $\mathfrak{C}$.
    \begin{enumerate}
        \item[(a)] (Purity) We have
        \[
        \mathfrak{C} \in \mathrm{CH}^{2n}(Y\times Y \times Y).
        \]
        \item[(b)] (Independence) The convolution class $\mathfrak{C}$ is independent of $X$ and $\CE$, and is therefore intrinsic to~$Y$.
    \end{enumerate}
\end{conj}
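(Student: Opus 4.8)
\smallskip
\noindent\emph{Sketch of a conditional strategy.}
The plan is to treat (a) and (b) in parallel, reducing each Chow-theoretic assertion to Markman's cohomological results together with a Franchetta-type input. The cohomological content is recorded in the cohomological counterpart Proposition \ref{prop1.8}, which (reformulating \cite{Markman}) shows that after the cycle class map $\mathfrak{C}$ lands in the pure piece $H^{4n}(Y\times Y\times Y,\BQ)$ --- the degree of a codimension-$2n$ cycle --- and that its class there is independent of the auxiliary data $(X,\CE)$. So for (a) it remains to show that the components $\mathfrak{C}_d\in\mathrm{CH}^d(Y^3)$ of $\mathfrak{C}$ with $d\neq 2n$, now known to be homologically trivial, actually vanish; and for (b) that the (homologically trivial) difference of two convolution classes vanishes. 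In this setting the only available mechanism for such an upgrade from homological to rational triviality is to spread the relevant cycles out over a parameter space and invoke injectivity of the cycle class map on generically defined cycles.

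\smallskip
\noindent\emph{Purity (a).} The construction of Sections \ref{sec1.2}--\ref{sec1.3} is relative: Markman's bundle, its deformation $\CE$, the normalized transforms $\widetilde{\mathfrak{F}}^{\pm 1}$, and hence $\mathfrak{C}$, all exist in algebraic families over a parameter space $\CM$ parametrizing triples $(Y,X,\CE)$ as in Section \ref{sec1.2}, with universal families $\CY\to\CM$ and $\CX\to\CM$. Thus each $\mathfrak{C}_d$ is the fibre of a relative cycle, so it lies in $\mathrm{Im}\bigl(\mathrm{CH}^d(\CY\times_\CM\CY\times_\CM\CY)\to\mathrm{CH}^d(Y^3)\bigr)$ --- that is, it is generically defined. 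If the cycle class map is injective on generically defined cycles on $\CY\times_\CM\CY\times_\CM\CY$ --- the Franchetta property for this family --- then homological triviality forces $\mathfrak{C}_d=0$ for $d\neq 2n$, which is (a). For $Y=S^{[n]}$ with $(S,H)$ of genus $g$ and $\Pic(S)=\BZ H$, the moduli-theoretic arguments of Section \ref{New_Sec4} identify these generically defined cycles with generically defined cycles on $S^{3n}$, so that (a) holds granting the $3n$-th Franchetta property for $K3$ surfaces of genus $g$; this is the mechanism behind Theorem \ref{thm0.7}(b). In general one needs the analogous Franchetta conjecture for the universal hyperholomorphic family. (Granting Conjecture \ref{conj1.1}, one checks by tracking degrees through the transforms that (a) is in fact \emph{equivalent} to the multiplicativity of the Chow--K\"ahler decomposition of $h(X)$ in Proposition \ref{prop1.3}(a); as such multiplicative Chow--K\"unneth decompositions are open for $K3^{[n]}$-type varieties, there is no purely motivic shortcut.)

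\smallskip
\noindent\emph{Independence (b).} Given two such triples $(Y,X_1,\CE_1)$ and $(Y,X_2,\CE_2)$ with normalized transforms $\widetilde{\mathfrak{F}}^{(i)}\colon h(Y)\xrightarrow{\sim} h(X_i)$, set $\psi:=\widetilde{\mathfrak{F}}^{(1)}\circ(\widetilde{\mathfrak{F}}^{(2)})^{-1}\in\mathrm{CH}^*(X_2\times X_1)$. Unwinding the definition of $\mathfrak{C}$ and cancelling the invertible factor $\widetilde{\mathfrak{F}}^{(2)}\times\widetilde{\mathfrak{F}}^{(2)}$, the equality $\mathfrak{C}^{(1)}=\mathfrak{C}^{(2)}$ is equivalent to
\[
[\Delta^{\mathrm{sm}}_{X_1}]\circ(\psi\times\psi)=\psi\circ[\Delta^{\mathrm{sm}}_{X_2}],
\]
that is, to $\psi$ being a morphism of ungraded algebra objects $(h(X_2),\cup)\to(h(X_1),\cup)$. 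In cohomology this holds by \cite{Markman} (Proposition \ref{prop1.8}), since $\psi$ induces on $H^*(-,\BQ)$ the composite of the two Fourier transforms attached to $\CE_1,\CE_2$, both of which are compatible with the natural (Looijenga--Lunts--Verbitsky) structure on cohomology and hence are ring isomorphisms; therefore $\mathfrak{C}^{(1)}-\mathfrak{C}^{(2)}$ is homologically trivial. Being again generically defined --- over a parameter space now parametrizing $(Y,X_1,X_2,\CE_1,\CE_2)$ --- it vanishes under the same Franchetta-type hypothesis, giving (b).

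\smallskip
\noindent\emph{Main obstacle.} The crux of both parts is the Franchetta-type input: passing from a homologically trivial, generically defined cycle on a triple self-fibre-product of the universal hyperholomorphic family to its actual vanishing in the Chow group. This is strictly stronger than O'Grady's generalized Franchetta conjecture \cite{OG}, and is at present accessible only conditionally and, even then, only after the Picard-rank-one reduction of Section \ref{New_Sec4} (Theorem \ref{thm0.7}). A secondary point is that one must first realize all relevant triples $(Y,X,\CE)$ inside algebraic families over a suitable parameter space $\CM$, so that ``generically defined'' is meaningful --- this rests on the deformation theory of projectively hyperholomorphic bundles of \cite{Markman}. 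Absent the Franchetta input one still obtains the full cohomological statement unconditionally (Proposition \ref{prop1.8}); the gap between that and Conjecture \ref{conj1.4} is exactly the difficulty of controlling homologically trivial cycles on self-products of hyper-K\"ahler varieties of $K3^{[n]}$-type.
\smallskip
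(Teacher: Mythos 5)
This statement is Conjecture~\ref{conj1.4} of the paper, and the paper does \emph{not} prove it; it is left open, and the authors only establish its cohomological shadow unconditionally (Proposition~\ref{prop1.8}(b), quoting Markman) and a generically defined Chow-level version conditionally, in the Picard-rank-one $M_{S,v}$ setting of Section~\ref{New_Sec4} (Propositions~\ref{prop3.4}(b) and~\ref{prop3.6}(b), under the $3n$-th Franchetta property). You correctly recognize that the statement is genuinely open and offer a conditional strategy rather than a proof, which is the right call.

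Your strategy in fact matches the mechanism the paper actually deploys in Section~\ref{New_Sec4}: establish the pure degree and independence of the convolution class cohomologically via Markman, spread the class out over a parameter space, and invoke a Franchetta-type injectivity of the cycle class map on generically defined cycles to kill the homologically trivial parts. For $Y = S^{[n]}$ over the moduli of genus-$g$ polarized $K3$ surfaces with Picard rank one, this gives Propositions~\ref{prop3.4}(b) and~\ref{prop3.6}(b); for general $K3^{[n]}$-type $Y$ one would indeed need a Franchetta conjecture for the universal hyperholomorphic family, which is beyond what is currently formulated, and you flag this honestly. Your parenthetical remark that, granting Conjecture~\ref{conj1.1}, purity of~$\mathfrak{C}$ is equivalent to multiplicativity of the Chow--K\"unneth decomposition of Proposition~\ref{prop1.3}(a) is correct: one direction is Proposition~\ref{prop1.6}(a), and the converse follows by expanding $\mathfrak{C}$ into graded pieces, applying Fourier stability (Proposition~\ref{prop1.3}(b)) to insert projectors, and using multiplicativity to reduce to the terms with $a+b+c = 2n$, each of which is in codimension $2n$. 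Your reduction of independence (b) to the identity
\[
[\Delta^{\mathrm{sm}}_{X_1}]\circ(\psi\times\psi)=\psi\circ[\Delta^{\mathrm{sm}}_{X_2}],\qquad \psi:=\widetilde{\mathfrak{F}}^{(1)}\circ(\widetilde{\mathfrak{F}}^{(2)})^{-1},
\]
is also correct, since each normalized transform conjugates the cup-product on $X_i$ to the same intrinsic Markman convolution on $Y$, so $\psi$ is cohomologically a ring isomorphism. In short, there is no gap to report: you identified what is proved (the cohomological analogue), what the paper conditionally achieves (the generically defined version), and why the general Chow statement remains out of reach.
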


\begin{rmk}
\begin{enumerate}
\item[(a)] The parallel purity result for abelian varieties $A, A^\vee$ follows directly from the fact that $\mathfrak{C}$ is the class of the graph of $+: A^\vee \times A^\vee \to A^\vee$.
\item[(b)] Under Conjecture \ref{conj1.1}, it is clear that the normalization (\ref{normal}) of the Fourier transforms only affects Conjecture~\ref{conj1.4}(b).
\end{enumerate}
\end{rmk}

\begin{prop}\label{prop1.6}
    Assume that Conjectures \ref{conj1.1} and \ref{conj1.4} hold. Then we have the following.
    \begin{enumerate}
    \item[(a)] The cup-product on $h(X)$ respects the decomposition in Proposition \ref{prop1.3}(a), \emph{i.e.},
        \[
       \cup:  h_{2i}(X) \times h_{2j}(X) \to h_{2i+2j}(X).
        \]
    \item[(b)] The convolution product on $h(Y)$ (with appropriate Tate-twists) respects the decomposition in Proposition \ref{prop1.3}(a), \emph{i.e.},
    \[
    \ast: h_{4n - 2i}(Y)(2n - 2i) \times h_{4n - 2j}(Y)(2n - 2j) \to h_{4n - 2i - 2j}(Y)(2n - 2i - 2j).
    \]
    \item[(c)] The graded algebra object 
\[
\bigoplus_{k=0}^{2n} h_{2k}(X), \quad \cup: h_{2i}(X) \times h_{2j}(X) \to h_{2i+2j}(X)
\]
given by (a) is isomorphic through the Fourier transform $\widetilde{\mathfrak{F}}$ to the graded algebra object
\begin{gather*}
\bigoplus_{k=0}^{2n} h_{4n-2k}(Y)(2n - 2k), \\ \ast: h_{4n-2i}(Y)(2n - 2i) \times h_{4n-2j}(Y)(2n - 2j) \to h_{4n-2i-2j}(Y)(2n - 2i - 2j)
\end{gather*}
given by (b).
\end{enumerate}
\end{prop}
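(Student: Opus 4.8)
The plan is to derive all three statements from one codimension-counting argument, exploiting that the small diagonal $[\Delta^{\mathrm{sm}}_X]\in\mathrm{CH}^{4n}(X^3)$ is of pure codimension $4n$, while the convolution class $\mathfrak{C}$ is of pure codimension $2n$ by Conjecture \ref{conj1.4}(a).

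First I would record the tautology behind the definition of $\mathfrak{C}$. Since $\widetilde{\mathfrak{F}}\circ\widetilde{\mathfrak{F}}^{-1}=\mathrm{id}$ and $\widetilde{\mathfrak{F}}^{-1}\circ\widetilde{\mathfrak{F}}=\mathrm{id}$, the relation $\mathfrak{C}=\widetilde{\mathfrak{F}}^{-1}\circ[\Delta^{\mathrm{sm}}_X]\circ(\widetilde{\mathfrak{F}}\times\widetilde{\mathfrak{F}})$ rearranges to $[\Delta^{\mathrm{sm}}_X]=\widetilde{\mathfrak{F}}\circ\mathfrak{C}\circ(\widetilde{\mathfrak{F}}^{-1}\times\widetilde{\mathfrak{F}}^{-1})$, equivalently $\cup\circ(\widetilde{\mathfrak{F}}\times\widetilde{\mathfrak{F}})=\widetilde{\mathfrak{F}}\circ\ast$. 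Thus $\widetilde{\mathfrak{F}}$ is an isomorphism of \emph{ungraded} algebra objects $(h(Y),\ast)\xrightarrow{\simeq}(h(X),\cup)$, and by Proposition \ref{prop1.3}(b) it carries $h_{4n-2k}(Y)(2n-2k)$ isomorphically onto $h_{2k}(X)$. Granting this, parts (a) and (b) become equivalent to one another, and (c) is the combination of (a), (b) with the above isomorphism; so it suffices to prove (a).

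For (a), fix integers $0\le i,j,k\le 2n$ and set $S_{ijk}:=\mathfrak{p}_{2k}\circ[\Delta^{\mathrm{sm}}_X]\circ(\mathfrak{p}_{2i}\times\mathfrak{p}_{2j})$, the $(i,j)\to k$ component of the cup-product; I must show $S_{ijk}=0$ unless $k=i+j$. Viewed directly as a composition of correspondences, $S_{ijk}\in\mathrm{CH}^{4n}(X^3)$. On the other hand, substituting $[\Delta^{\mathrm{sm}}_X]=\widetilde{\mathfrak{F}}\circ\mathfrak{C}\circ(\widetilde{\mathfrak{F}}^{-1}\times\widetilde{\mathfrak{F}}^{-1})$ and using Proposition \ref{prop1.3}(b) (together with the vanishing of odd-degree parts from Conjecture \ref{conj1.1}(a)) to rewrite $\mathfrak{p}_{2k}\circ\widetilde{\mathfrak{F}}=\mathfrak{p}_{2k}\circ\widetilde{\mathfrak{F}}_{2k}\circ\mathfrak{p}^\vee_{4n-2k}$ and $\widetilde{\mathfrak{F}}^{-1}\circ\mathfrak{p}_{2i}=\mathfrak{p}^\vee_{4n-2i}\circ\widetilde{\mathfrak{F}}^{-1}_{4n-2i}\circ\mathfrak{p}_{2i}$ (and similarly in $j$), one finds
\[
S_{ijk}=\big(\mathfrak{p}_{2k}\circ\widetilde{\mathfrak{F}}_{2k}\big)\circ\mathfrak{C}_{(4n-2i,\,4n-2j),\,4n-2k}\circ\big((\widetilde{\mathfrak{F}}^{-1}_{4n-2i}\circ\mathfrak{p}_{2i})\times(\widetilde{\mathfrak{F}}^{-1}_{4n-2j}\circ\mathfrak{p}_{2j})\big),
\]
where $\mathfrak{C}_{(a,b),c}:=\mathfrak{p}^\vee_{2c}\circ\mathfrak{C}\circ(\mathfrak{p}^\vee_{2a}\times\mathfrak{p}^\vee_{2b})$. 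By Conjecture \ref{conj1.4}(a) we have $\mathfrak{C}\in\mathrm{CH}^{2n}(Y^3)$, hence $\mathfrak{C}_{(a,b),c}\in\mathrm{CH}^{2n}(Y^3)$, and tracking codimensions through the three compositions on the right-hand side (using $\widetilde{\mathfrak{F}}_{2\ell}\in\mathrm{CH}^{2\ell}(Y\times X)$ and $\widetilde{\mathfrak{F}}^{-1}_{2\ell}\in\mathrm{CH}^{2\ell}(X\times Y)$) gives $S_{ijk}\in\mathrm{CH}^{4n-2i-2j+2k}(X^3)$. As $\mathrm{CH}^*(X^3)$ is graded by codimension, $S_{ijk}$ lies in $\mathrm{CH}^{4n}(X^3)\cap\mathrm{CH}^{4n-2i-2j+2k}(X^3)$, which vanishes whenever $k\ne i+j$. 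This proves (a); applying $\widetilde{\mathfrak{F}}$ on the left and $(\widetilde{\mathfrak{F}}^{-1}\times\widetilde{\mathfrak{F}}^{-1})$ on the right of the displayed identity (or running the same count with the roles of $[\Delta^{\mathrm{sm}}_X]$ and $\mathfrak{C}$ interchanged) yields (b), and (c) follows as explained.

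The step I expect to require the most care is identifying the codimension of the rewritten $S_{ijk}$: this is exactly where Conjecture \ref{conj1.4}(a) enters and is indispensable, since without purity of $\mathfrak{C}$ the right-hand side is of mixed codimension and the argument collapses. One must also keep the Tate twists straight and remember, from Proposition \ref{prop1.3}(b), that the degree-$2k$ component $\widetilde{\mathfrak{F}}_{2k}$ maps out of $h_{4n-2k}(Y)$ and into $h_{2k}(X)$ — the remainder is formal bookkeeping with compositions of correspondences and orthogonal projectors.
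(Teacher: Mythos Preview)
Your argument is correct and follows essentially the same route as the paper's proof: both reduce (a) to showing $\mathfrak{p}_{2k}\circ[\Delta^{\mathrm{sm}}_X]\circ(\mathfrak{p}_{2i}\times\mathfrak{p}_{2j})=0$ for $k\ne i+j$ by substituting $[\Delta^{\mathrm{sm}}_X]=\widetilde{\mathfrak{F}}\circ\mathfrak{C}\circ(\widetilde{\mathfrak{F}}^{-1}\times\widetilde{\mathfrak{F}}^{-1})$, collapsing to a single graded piece of each Fourier transform, and comparing the resulting codimension $4n-2i-2j+2k$ against $4n$ using the purity of $\mathfrak{C}$ from Conjecture~\ref{conj1.4}(a); and both derive (b), (c) from (a) via the ungraded algebra isomorphism. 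The only difference is packaging: the paper expands the projectors as $\widetilde{\mathfrak{F}}_{2k}\circ\widetilde{\mathfrak{F}}^{-1}_{4n-2k}$ and invokes Conjecture~\ref{conj1.1}(b) directly, whereas you cite its consequence Proposition~\ref{prop1.3}(b). One small slip: your indexing convention $\mathfrak{C}_{(a,b),c}:=\mathfrak{p}^\vee_{2c}\circ\mathfrak{C}\circ(\mathfrak{p}^\vee_{2a}\times\mathfrak{p}^\vee_{2b})$ is inconsistent with the subscripts $(4n-2i,4n-2j),4n-2k$ you then use (which would double again); drop the factor of $2$ in the definition or halve the subscripts.
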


\begin{proof}
    We first prove (a). The desired statement is equivalent to the vanishing
    \begin{equation} \label{eq:conv1}
\mathfrak{p}_{2k}\circ [\Delta^{\mathrm{sm}}_{X}] \circ (\mathfrak{p}_{2i} \times \mathfrak{p}_{2j})=0, \quad i+j \neq k.
    \end{equation}
    Expanding the left-hand side of \eqref{eq:conv1}, we find
    \[
    \widetilde{\mathfrak{F}}_{2k}\circ \widetilde{\mathfrak{F}}_{4n-2k}^{-1} \circ \left(\widetilde{\FF}\circ \mathfrak{C} \circ(\widetilde{\FF}^{-1} \times \widetilde{\FF}^{-1}) \right)\circ (\widetilde{\mathfrak{F}}_{2i}\times  \widetilde{\mathfrak{F}}_{2j})\circ (\widetilde{\mathfrak{F}}^{-1}_{4n-2i} \times \widetilde{\mathfrak{F}}^{-1}_{4n-2j} ).
    \]
    Expanding further, only one terms survives the Fourier vanishing of Conjecture \ref{conj1.1}, namely
    \begin{equation} \label{eq:conv2}
    \widetilde{\mathfrak{F}}_{2k}\circ \widetilde{\mathfrak{F}}_{4n-2k}^{-1} \circ \widetilde{\FF}_{2k}\circ \mathfrak{C} \circ(\widetilde{\FF}^{-1}_{4n - 2i} \times \widetilde{\FF}_{4n - 2j}^{-1})\circ (\widetilde{\mathfrak{F}}_{2i}\times  \widetilde{\mathfrak{F}}_{2j})\circ (\widetilde{\mathfrak{F}}^{-1}_{4n-2i} \times \widetilde{\mathfrak{F}}^{-1}_{4n-2j} ).
    \end{equation}
    Notice that the left-hand side of \eqref{eq:conv1} is a class in $\mathrm{CH}^{4n}(X \times X \times X)$, while by Conjecture~\ref{conj1.4}(a), the class \eqref{eq:conv2} is in
    \[
    \mathrm{CH}^{4n - 2i - 2j + 2k}(X \times X \times X), \quad i + j \neq k.
    \]
    The degree difference forces the vanishing \eqref{eq:conv1}.

    To see (b, c), we note that the convolution class induces a commutative diagram of ungraded Chow motives
    \begin{equation} \label{eq:fouriercomp}
\begin{tikzcd}
h(Y) \otimes h(Y) \arrow[r, "\ast"] \arrow[d, "\widetilde{\mathfrak{F}} \otimes \widetilde{\mathfrak{F}}"] & h(Y) \arrow[d, "\widetilde{\mathfrak{F}}"] \\
h(X) \otimes h(X) \arrow[r, "\cup"] & h(X).
\end{tikzcd}
\end{equation}
By Proposition \ref{prop1.3}(b), the vertical arrows respect the decompositions of Proposition \ref{prop1.3}(a) and they admit inverses given by $\widetilde{\mathfrak{F}}^{-1}$. By (a), the horizontal bottom arrow also respects the decompositions of Proposition \ref{prop1.3}(a). Consequently, the horizontal top arrow also respects the decompositions, which confirms (b). In particular, (\ref{eq:fouriercomp}) induces commutative diagrams 
\begin{equation*}
\begin{tikzcd}
h_{4n - 2i}(Y)(2n - 2i) \otimes h_{4n - 2j}(Y)(2n - 2j) \arrow[r, "\ast"] \arrow[d, "\widetilde{\mathfrak{F}}_{2i} \otimes \widetilde{\mathfrak{F}}_{2j}"] & h_{4n - 2i - 2j}(Y)(2n - 2i - 2j) \arrow[d, "\widetilde{\mathfrak{F}}_{2i + 2j}"] \\
h_{2i}(X) \otimes h_{2j}(X) \arrow[r, "\cup"] & h_{2i+2j}(X)
\end{tikzcd}
\end{equation*}
with the vertical arrows isomorphisms of Chow motives. This proves (c).
\end{proof}

\subsection{Results in cohomology}\label{sec1.4}
We first note that both conjectures in Section \ref{sec1.3}, which concern algebraic cycles, hold cohomologically by Markman \cite{Markman}.


\begin{prop}[Markman \cite{Markman}]\label{prop1.8}
    With the notation as in Section \ref{sec1.3}, the following hold.
    \begin{enumerate}
    \item[(a)] The class $\FF$ in cohomology induces a degree-reversing morphism
    \[
    \FF: H^*(Y, \BQ) \to H^*(X, \BQ), \quad H^i(Y, \BQ) \to H^{4n - i}(X, \BQ).
    \]
    In particular, for any integers $0 \leq i,j < 2n$ we have
    \[
    \FF_{2i + 1} = 0 \in H^{4i + 2}(Y \times X, \BQ), \quad \FF^{-1}_{2j + 1} = 0 \in H^{4j + 2}(X \times Y, \BQ).
    \]
    For any integers $0 \leq i,j \leq 2n$ with $i+j\neq 2n$, we have
    \[
    \mathfrak{F}_{2i} \circ \mathfrak{F}^{-1}_{2j}= 0 \in H^{4i+4j-4n}(X \times X, \BQ), \quad  \mathfrak{F}^{-1}_{2i} \circ \mathfrak{F}_{2j}= 0 \in H^{4i+4j-4n}(Y \times Y, \BQ).
    \]
    We further have
    \[
    \mathrm{Im}(\Fp_{2k}) = H^{2k}(X, \BQ), \quad  \mathrm{Im}(\Fp^\vee_{2k}) = H^{2k}(Y, \BQ), \quad 0 \leq k \leq 2n. 
    \]
\item[(b)] The class $\mathfrak{C}$ in cohomology induces a convolution product
\[
\ast: H^{4n - 2i}(Y, \BQ) \times H^{4n - 2j}(Y, \BQ) \to H^{4n - 2i - 2j}(Y, \BQ)
\]
which is intrinsic to $Y$.
    \end{enumerate}
\end{prop}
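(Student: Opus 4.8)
The plan is to reduce everything to Markman's analysis of (projectively) hyperholomorphic bundles in \cite{Markman}: the substantive inputs are the degree-reversing property of $\mathfrak{F}$ in part~(a) and the intrinsic description of the convolution product in part~(b), and, granting these, the remaining assertions of (a) and the grading statement in (b) follow from elementary K\"unneth bookkeeping.

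First I would invoke the cohomological core of \cite{Markman}: the class $\mathfrak{F}$ is ``balanced'', i.e. its codimension-$k$ part $\mathfrak{F}_k$ lies in $H^k(Y,\BQ)\otimes H^k(X,\BQ)\subset H^{2k}(Y\times X,\BQ)$, equivalently $\mathfrak{F}\colon H^i(Y,\BQ)\to H^{4n-i}(X,\BQ)$, and likewise for $\mathfrak{F}^{-1}$. The mechanism is that $\mathfrak{F}=\kappa(\CE)\sqrt{\mathrm{td}_{Y\times X}}$ is built from the \emph{projectively} hyperholomorphic bundle $\CE$, and $\kappa(\CE)$ --- which depends only on the honestly hyperholomorphic projectivization $\p(\CE)$, the twist by $\det(\CE)^{-1}$ and the $\mathrm{rank}$-th root removing the obstruction to hyperholomorphicity --- defines a monodromy-invariant, hence parallel, cohomology class over the relevant moduli of deformations; its compatibility with the gradings then follows from Looijenga--Lunts--Verbitsky-equivariance together with a computation in a seed situation ($Y=M_{S_0,v_0}^{[n]}$, $X=S_0^{[n]}$, $\CE=\CU^{[n]}$). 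The heuristic is the abelian case of Section~\ref{sec1.1}, where $\mathrm{ch}(\CL)=\exp(c_1(\CL))$ is balanced because $c_1(\CL)$ has pure K\"unneth bidegree $(1,1)$; in the hyper-K\"ahler setting no such formal reason is available, which is precisely why Markman's theorem is needed.

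Granting the degree-reversing property, the rest of (a) is formal. Hyper-K\"ahler varieties of $K3^{[n]}$-type have vanishing odd rational cohomology, so $H^k(Y,\BQ)\otimes H^k(X,\BQ)=0$ for odd $k$; hence $\mathfrak{F}_{2i+1}=0$ and $\mathfrak{F}^{-1}_{2j+1}=0$ in cohomology. By the degree-reversing property, $\mathfrak{F}^{-1}_{2j}$ annihilates $H^a(X,\BQ)$ for $a\neq 4n-2j$ and maps $H^{4n-2j}(X,\BQ)$ into $H^{2j}(Y,\BQ)$, while $\mathfrak{F}_{2i}$ annihilates $H^b(Y,\BQ)$ for $b\neq 4n-2i$ and maps $H^{4n-2i}(Y,\BQ)$ into $H^{2i}(X,\BQ)$; therefore $\mathfrak{F}_{2i}\circ\mathfrak{F}^{-1}_{2j}$ and $\mathfrak{F}^{-1}_{2i}\circ\mathfrak{F}_{2j}$ vanish cohomologically unless $i+j=2n$. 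Consequently $\Fp_{2k}=\mathfrak{F}_{2k}\circ\mathfrak{F}^{-1}_{4n-2k}$ annihilates $H^a(X,\BQ)$ for $a\neq 2k$ and maps $H^{2k}(X,\BQ)$ to $H^{2k}(X,\BQ)$; since $\sum_k\Fp_{2k}=[\Delta_X]$ (using $[\Delta_X]=\mathfrak{F}\circ\mathfrak{F}^{-1}$ and the preceding vanishings) acts as the identity and the images lie in distinct cohomological degrees, each $\Fp_{2k}$ restricts to the identity on $H^{2k}(X,\BQ)$, so $\mathrm{Im}(\Fp_{2k})=H^{2k}(X,\BQ)$, and symmetrically $\mathrm{Im}(\Fp^\vee_{2k})=H^{2k}(Y,\BQ)$. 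In particular $\Fp_{2k}$ and $\Fp^\vee_{2k}$ are the K\"unneth projectors, which is the cohomological statement needed in the proof of Proposition~\ref{prop1.3}(a).

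For part (b), the grading statement $\ast\colon H^{4n-2i}(Y,\BQ)\times H^{4n-2j}(Y,\BQ)\to H^{4n-2i-2j}(Y,\BQ)$ follows formally from (a): $\widetilde{\mathfrak{F}}$ sends $H^{4n-2i}(Y,\BQ)$ to $H^{2i}(X,\BQ)$, cup product on $H^*(X,\BQ)$ is degree-additive, $\widetilde{\mathfrak{F}}^{-1}$ sends $H^{2i+2j}(X,\BQ)$ back to $H^{4n-2i-2j}(Y,\BQ)$, and the scalar normalizations $r^{k-n}$ have no effect. The intrinsicness of $\ast$ to $Y$ is again Markman's theorem: the cohomological convolution class $\mathfrak{C}\in H^*(Y\times Y\times Y,\BQ)$, being assembled from $\widetilde{\mathfrak{F}}$, $\widetilde{\mathfrak{F}}^{-1}$ and the small diagonal of $X$, is itself parallel over the moduli of deformations, hence determined by a base-case computation and independent of the auxiliary $X$ and $(\CE,\alpha_\CE)$; equivalently, $(H^*(Y,\BQ),\ast)$ is identified with an algebra depending only on the Looijenga--Lunts--Verbitsky structure of $Y$. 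The main difficulty I anticipate is not conceptual but bookkeeping: faithfully matching the grading conventions used above (codimension in $Y\times X$, the $\sqrt{\mathrm{td}}$ twist, the $\mathrm{rank}$-th-root normalization of $\kappa$) with those in \cite{Markman} (the Mukai grading, the BKR correspondence, the Tate twists), and checking that the rescalings $\widetilde{\mathfrak{F}}_{2k}=r^{k-n}\mathfrak{F}_{2k}$ are harmless for (a) while being exactly what makes $\widetilde{\mathfrak{F}}$ intertwine cup product and convolution on the nose; the genuinely deep input --- that projective hyperholomorphicity makes $\kappa(\CE)\sqrt{\mathrm{td}}$ parallel, together with the base-case identifications --- is entirely contained in \cite{Markman}.
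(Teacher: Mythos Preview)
Your proposal is correct and follows essentially the same route as the paper: cite \cite[Lemma~4.1(1)]{Markman} for the degree-reversing property, derive the odd-degree and composition vanishings formally from this together with the vanishing of odd cohomology for $K3^{[n]}$-type (as in \cite[Corollary~4.2]{Markman}), and read off the images of the $\Fp_{2k}$. For (b) the paper frames the intrinsicness slightly differently---Markman first defines the convolution product directly for any hyper-K\"ahler variety in \cite[Section~10]{Markman} and then proves it agrees with the Fourier-defined one, the normalization $r^{k-n}$ being exactly what is needed for the match via \cite[Corollary~1.10]{Markman}---but this is the same content as your parallel-transport sketch.
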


\begin{proof}
The degree-reversing property of $\FF$ in (a) is \cite[Lemma 4.1(1)]{Markman}. For the Fourier vanishing, we view $\FF_{2i + 1}$ as a morphism
\begin{equation} \label{eq:ffodd}
\FF_{2i + 1}: H^*(Y, \BQ) \to H^*(X, \BQ)
\end{equation}
and $\mathfrak{F}_{2i} \circ \mathfrak{F}^{-1}_{2j}$ as the composition
\begin{equation}\label{comp}
H^*(X, \BQ) \xrightarrow{\mathfrak{F}_{2j}^{-1}} H^*(Y, \BQ) \xrightarrow{\mathfrak{F}_{2i}} H^*(X, \BQ).
\end{equation}
The vanishing of $\FF_{2i + 1}$ and $\mathfrak{F}_{2i} \circ \mathfrak{F}^{-1}_{2j}$ in cohomology is equivalent to the vanishing of \eqref{eq:ffodd} and~\eqref{comp}, respectively. Both are immediate consequences of the degree-reversing property of~$\FF$, together with the fact that $K3^{[n]}$-type varieties have no odd cohomology; see \cite[Corollary~4.2]{Markman}. The other half of the Fourier vanishing is parallel. The statement on the images of~$\Fp_{2k}, \Fp^\vee_{2k}$ also follows directly from the degree-reversing property.

For (b), Markman introduced in \cite[Section 10]{Markman} a cohomological convolution product for any hyper-K\"ahler variety, and showed that if the hyper-K\"ahler variety is of $K3^{[n]}$-type, this convolution product can be realized as the convolution product associated with the Fourier transform of Section \ref{sec1.3}. In particular, the normalization in (\ref{normal}) by the factor $r^{k - n}$ is crucial for the convolution product associated with the Fourier transform to be matched on the nose with Markman's intrinsic one; see \cite[Corollary 1.10]{Markman}.
\end{proof}

As a consequence, both Propositions in Section \ref{sec1.3} hold at the level of homological motives. We thus obtain K\"unneth decompositions of the homological motives
\[
h_{\mathrm{hom}}(X) = \bigoplus_{k = 0}^{2n}h_{\mathrm{hom}, 2k}(X), \quad h_{\mathrm{hom}}(Y) = \bigoplus_{k = 0}^{2n}h_{\mathrm{hom}, 2k}(Y),
\]
and the two graded algebra objects
\[
\left(\bigoplus_{k = 0}^{2n}h_{\mathrm{hom}, 2k}(X), \cup\right), \quad \left(\bigoplus_{k = 0}^{2n}h_{\mathrm{hom}, 4n-2k}(Y)(2n-2k), \ast\right)
\]
are isomorphic through the Fourier transform $\widetilde{\FF}$.

Next, we give a description of the kappa class of Section \ref{sec1.2} in terms of the (rational) \emph{extended Mukai lattice}. For a hyper-K\"ahler manifold $X$, its extended Mukai lattice is the~$\BQ$-vector space
\begin{equation*}\label{extended_Mukai}
\widetilde{H}(X,\BQ) = \BQ \alpha \oplus H^2(X, \BQ) \oplus \BQ\beta, \quad 
\alpha^2=\beta^2=0, \quad (\alpha,\beta) = -1,
\end{equation*}
where the pairing on $H^2(X,\BQ)$ is induced by the Beauville--Bogomolov--Fujiki (BBF) form, and $\alpha,\beta$ are orthogonal to $H^2(X,\BQ)$. By \cite{Taelman, Beck, Markman}, the lattice $\widetilde{H}(X, \BQ)$ encodes important information of its derived category. Let $\widetilde{{H}}(X, \BC):= \widetilde{H}(X, \BQ) \otimes_\BQ \BC$ be the extended Mukai lattice with $\BC$-coefficients.

We consider a derived equivalence (\ref{FM}) obtained from a $K3$ surface $S_0$ of Picard rank $1$ and an isotropic Mukai vector $v_0$. We first explain that an isometry 
\begin{equation}\label{varphi}
\varphi: \widetilde{H}(Y, \BC) \xrightarrow{\simeq}  \widetilde{H}(X, \BC)
\end{equation}
yields an isomorphism of (ungraded) $\BC$-vector spaces 
\[
\varphi^H: H^*(Y, \BC) \xrightarrow{\simeq} H^*(X, \BC).
\]
By the construction, $X,Y$ are deformed from $S_0^{[n]}, M^{[n]}_{S_0,v_0}$ respectively. We further deform the $K3$ surface $S_0$ to a special one $S'$ such that the corresponding moduli space~$M_{S',v'}$ is isomorphic to $S'$ itself. We have parallel transports
\begin{equation}\label{PTX}
\mathrm{PT}_X: \widetilde{H}(X, \BQ) \to \widetilde{H}(S'^{[n]}, \BQ), \quad \mathrm{PT}^H_X: H^*(X, \BQ) \to H^*(S'^{[n]}, \BQ), 
\end{equation}
and
\begin{equation}\label{PTY'}
\mathrm{PT}_Y: \widetilde{H}(Y, \BQ) \to \widetilde{H}(M^{[n]}_{S',v'}, \BQ), \quad \mathrm{PT}^H_X: H^*(Y, \BQ) \to H^*(M^{[n]}_{S',v'}, \BQ).
\end{equation}
Through a choice of an isomorphism 
\begin{equation}\label{iso}
M_{S',v'} \simeq S',
\end{equation}
the parallel transports (\ref{PTY'}) can further be expressed as
\begin{equation}\label{PTY}
\mathrm{PT}_Y: \widetilde{H}(Y, \BQ) \to \widetilde{H}(S'^{[n]}, \BQ), \quad \mathrm{PT}^H_X: H^*(Y, \BQ) \to H^*(S'^{[n]}, \BQ).
\end{equation}

Now, by the parallel transports (\ref{PTX}) and (\ref{PTY}), the isometry (\ref{varphi}) induces an isometry 
\begin{equation}\label{isometry5}
\mathrm{PT}_Y\circ \varphi \circ \mathrm{PT}^{-1}_X:  \widetilde{H}(S'^{[n]}, \BQ) \to  \widetilde{H}(S'^{[n]}, \BQ).
\end{equation}
Integration of the Looijenga--Lunts--Verbitsky (LLV) algebra provides an action of the special orthogonal group $\mathrm{SO}(\widetilde{H}(S'^{[n]}, \BC))$ on the total cohomology $H^*(S'^{[n]}, \BC)$,
\begin{equation}\label{integ}
\mathrm{SO}(\widetilde{H}(S'^{[n]}, \BC)) \to \mathrm{GL}( H^*(S'^{[n]}, \BC)).
\end{equation}
A further choice of a monodromy reflection of $S'$, which induces a monodromy reflection of~$S'^{[n]}$, upgrades \eqref{integ} to an action of the full orthogonal group
\begin{equation} \label{integ2}
O(\widetilde{H}(S'^{[n]}, \BC)) \to \mathrm{GL}( H^*(S'^{[n]}, \BC));
\end{equation}
see \cite[Section 10.1]{Markman}. By composing with the parallel transports (\ref{PTX}) and (\ref{PTY}), we obtain
\[
\varphi^H : H^*(Y, \BC) \xrightarrow{\mathrm{PT}^H_Y} H^*(S'^{[n]}, \BC) \to H^*(S'^{[n]}, \BC) \xrightarrow{(\mathrm{PT}_X^{H})^{-1}} H^*(X, \BC)
\]
where the second map is given by the integration map \eqref{integ2} applied to (\ref{isometry5}). This gives the desired (ungraded) assignment
\[
\left( \varphi: \widetilde{H}(Y, \BC) \xrightarrow{\simeq} \widetilde{H}(X, \BC) \right) \mapsto  \left( \varphi^H: H^*(Y, \BC) \xrightarrow{\simeq} H^*(X, \BC) \right).
\]

For the derived equivalence (\ref{FM}), recall that we have the Fourier transform
\[
\mathfrak{F}: H^*(Y, \BQ) \to H^*(X, \BQ)
\]
induced by the kappa class of the Fourier--Mukai kernel $\CE$. By results of Taelman \cite[Theorems~4.10 and 4.11]{Taelman}, the morphism $\mathfrak{F}$ induces an isometry of the extended Mukai lattice
\begin{equation} \label{eq:htilde}
\widetilde{H}(\mathfrak{F}): \widetilde{H}(Y, \BC) \to \widetilde{H}(X, \BC)
\end{equation}
whose construction is representation-theoretic; see also \cite[Proposition 3.1]{Markman}. Note that when~$n$ is even, the isometry $\widetilde{H}(\mathfrak{F})$ is defined up to a sign depending on the orientations of~$X, Y$.\footnote{We refer to \cite{Markman}, especially Section 3 and Convention 5.16 therein for a careful analysis of the sign. As the precise sign is irrelevant for our purpose, we write ``$\pm$'' in the sequel to simplify the exposition.}.

The following lemma due to Markman \cite{Markman} shows that the morphism $\mathfrak{F}$ is determined up to a sign by the isometry $\widetilde{H}(\mathfrak{F})$ via the integration map (\ref{integ2}).

\begin{lem}[\cite{Markman}]\label{lem1.10}
Set $\varphi$ to be the isometry $\widetilde{H}(\mathfrak{F})$. Then we have $\mathfrak{F} = (\pm \varphi)^H$.
\end{lem}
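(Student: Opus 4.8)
The plan is to compare two $\BC$-linear maps $H^*(Y,\BC) \to H^*(X,\BC)$: the Fourier transform $\FF$ (induced by the kappa class of the Fourier--Mukai kernel $\CE$), and the map $(\varphi)^H$ obtained by applying the integration map \eqref{integ2} to the associated extended-Mukai isometry $\varphi = \widetilde{H}(\FF)$. Both maps are degree-reversing by Proposition \ref{prop1.8}(a) (for $\FF$) and by the weight conventions built into the LLV integration (for $(\varphi)^H$), so it suffices to match them. The strategy is to transport everything, via the parallel transports \eqref{PTX} and \eqref{PTY}, to the ``standard'' model: the situation of a $K3$ surface $S'$ with $M_{S',v'} \simeq S'$, so that $X$ deforms to $S'^{[n]}$ and $Y$ deforms to $M^{[n]}_{S',v'} \simeq S'^{[n]}$. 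Under this reduction $\FF$ becomes the Fourier transform of a concrete, geometrically explicit Fourier--Mukai kernel on a product $S'^{[n]} \times S'^{[n]}$ built from the universal bundle, and $\varphi$ becomes the isometry \eqref{isometry5} of $\widetilde{H}(S'^{[n]},\BC)$. So the claim reduces to: for the standard model, the Fourier transform equals $\pm$ the integration of its extended-Mukai isometry.

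First I would invoke the compatibility of the kappa-class construction with parallel transport along the twistor/deformation family connecting $(\CE,\alpha_\CE)$ on $Y\times X$ to $\CU^{[n]}$ on $M^{[n]}_{S_0,v_0} \times S_0^{[n]}$: since $(\CE,\alpha_\CE)$ is deformed from $\CU^{[n]}$ as a projectively hyperholomorphic bundle, its kappa class is flat for the Gauss--Manin connection, hence $\mathrm{PT}^H_X \circ \FF = (\text{standard }\FF) \circ \mathrm{PT}^H_Y$, and similarly the extended-Mukai isometry is parallel-transport-equivariant by the functoriality in \cite[Theorems 4.10, 4.11]{Taelman} (and \cite[Proposition 3.1]{Markman}). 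Second, in the standard model the LLV action \eqref{integ2} is multiplicity-free in the sense that the $O(\widetilde{H})$-representation $H^*(S'^{[n]},\BC)$ determines a map out of $\widetilde{H}$-isometries uniquely up to the sign ambiguity in \eqref{eq:htilde}; this is precisely \cite[Section 10.1]{Markman} (or the relevant corollary there, e.g.\ \cite[Corollary 1.10]{Markman}). Third, one checks that the standard $\FF$ does lie in the image of \eqref{integ2}, i.e.\ it is an LLV-algebra morphism — this is the content of Markman's analysis of the Fourier transform on moduli of sheaves, using that the universal bundle's kappa class is hyperholomorphic and hence its action on cohomology commutes with the LLV Lie algebra. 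Combining these three inputs identifies the standard $\FF$ with $\pm(\text{its }\widetilde{H}\text{-isometry})^H$, and transporting back via \eqref{PTX}, \eqref{PTY} gives $\FF = (\pm\varphi)^H$.

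The main obstacle I expect is the third step: showing that $\FF$ is genuinely an $O(\widetilde H)$-equivariant (LLV) morphism on the nose, rather than merely a degree-reversing linear isomorphism compatible with the BBF form on $H^2$. A degree-reversing isometry of $H^2$ alone extends to many linear automorphisms of $H^*$, so one really needs the full hyperholomorphicity of the kernel — i.e.\ that $\FF$ intertwines the $\Fg$-action coming from \emph{all} Kähler classes in the twistor family, not just one — together with the normalization \eqref{normal} by the factor $r^{k-n}$, which is exactly what pins down the scalars on each graded piece (cf.\ the role of the normalization noted in the proof of Proposition \ref{prop1.8}(b) and \cite[Corollary 1.10]{Markman}). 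Once one grants Markman's result that the cohomological action of the hyperholomorphic kernel is LLV-equivariant, the rest is the representation-theoretic rigidity argument above; the sign ambiguity for $n$ even is intrinsic and is absorbed into the ``$\pm$'', consistently with the convention fixed after \eqref{eq:htilde}.
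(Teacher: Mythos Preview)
Your overall strategy --- reduce to the standard model via parallel transport, then invoke Markman's results there --- is the same as the paper's. The paper's proof is a three-line citation of \cite[Lemma 10.6 and Proposition 10.7]{Markman}, together with one structural point you do not articulate: in the standard model, the isometry $\widetilde{H}(\mathfrak{F})$ arises, via the BKR correspondence, from a \emph{self-isometry of the $K3$ surface}
\[
\widetilde{H}(M_{S',v'},\BQ)\ \simeq\ \widetilde{H}(S',\BQ)\ \to\ \widetilde{H}(S',\BQ),
\]
the first map being induced by the chosen isomorphism \eqref{iso}. At the $K3$ level the cohomological Fourier transform and the Mukai-lattice isometry are literally the same map, and BKR lifts this identification functorially to $S'^{[n]}$. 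This is what Markman's Lemma 10.6 and Proposition 10.7 formalize, and it is the mechanism that actually places $\mathfrak{F}$ in the image of the integration map \eqref{integ2}.

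Your proposed mechanism for Step 3 --- ``the kappa class is hyperholomorphic and hence its action on cohomology commutes with the LLV Lie algebra'' --- is not correct as stated and is not how Markman argues. Hyperholomorphicity with respect to a given hyperk\"ahler metric gives compatibility only with the $\mathfrak{sl}_2$-triples coming from K\"ahler classes in \emph{that} twistor family, not with the full LLV algebra generated by all $\mathfrak{sl}_2$-triples; so this does not by itself show $\mathfrak{F}$ lies in the image of \eqref{integ2}. Likewise, your Step 2 ``multiplicity-free'' remark is not the relevant input: what is needed is not a Schur-type statement about the $O(\widetilde{H})$-module $H^*$, but simply that once $\mathfrak{F}$ is known to be in the image of \eqref{integ2}, its preimage must be $\pm\widetilde{H}(\mathfrak{F})$ by the very (representation-theoretic) definition of $\widetilde{H}(\mathfrak{F})$. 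The BKR route removes both of these detours.
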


\begin{proof}
    This is a consequence of \cite[Lemma 10.6 and Proposition 10.7]{Markman}, since the isometry~$\widetilde{H}(\mathfrak{F})$ is induced up to a sign by a self-isometry 
    \[
    \widetilde{H}(M_{S',v'}, \BQ) \simeq \widetilde{H}(S', \BQ) \to \widetilde{H}(S', \BQ)
    \]
    of the $K3$ surface $S'$ through the BKR correspondence, where the first isomorphism is given by the chosen isomorphism (\ref{iso}).
    \end{proof}

\subsection{Relations to other work}
\begin{enumerate}
    \item[(a)] Section \ref{sec1} focuses on generalizing the Fourier transform package from abelian varieties to hyper-K\"ahler varieties, where the role of the Poincaré line bundle is played by higher-rank hyperholomorphic bundles. In another direction, the Fourier transform package has been considered for \emph{abelian fibrations} with singular fibers where the Poincar\'e line bundle is replaced by certain generically rank $1$ Cohen--Macaulay sheaves; see \cite{MSY,MSY2} for more details. 
    \item[(b)] Markman's hyperholomorphic bundles, which are crucial in our construction of Section~\ref{sec1}, have also been applied to various other algebro-geometric questions for hyper-K\"ahler varieties of $K3^{[n]}$-type, including algebraic realizations of Hodge isometries \cite{Markman}, symmetries of derived categories \cite{KK, MSYZ, Zhang}, and the period--index problem \cite{HMSYZ, BH}.
    \item[(c)] A different Fourier transform was studied in \cite{SV} for the self-product of a hyper-K\"ahler fourfold which is either the Hilbert square of a $K3$ surface or the Fano variety of lines of a nonsingular cubic fourfold.
\end{enumerate}

\section{Proofs of Theorems \ref{thm0.4}, \ref{thm0.5}, and \ref{thm0.8}}\label{New_Sec2}

In this section, we complete the proofs of all the theorems concerning homological motives for hyper-K\"ahler varieties of $K3^{[n]}$-type: Theorems \ref{thm0.4}, \ref{thm0.5}, and \ref{thm0.8}. A key step is Proposition~\ref{prop:ffauto} which shows the existence of algebraic correspondences conjugating the cup-product and the convolution product for a hyper-K\"ahler variety of $K3^{[n]}$-type.

\subsection{Derived equivalences and Hodge isometries}

Let $X, X'$ be hyper-K\"ahler varieties of~$K3^{[n]}$-type. We say that $X, X'$ are Hodge isometric, if there is a Hodge isometry
\[
H^2(X, \BQ) \simeq H^2(X', \BQ)
\]
with respect to the BBF form. The following proposition shows that derived equivalent varieties of $K3^{[n]}$-type must be Hodge isometric.

\begin{prop}\label{D=>HI}
    If $X, X'$ are derived equivalent, then they are Hodge isometric.
\end{prop}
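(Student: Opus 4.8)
The plan is to reduce the statement to the derived invariance of the \emph{extended} Mukai lattice, and then recover $H^2$ from $\widetilde H$ by an elementary manipulation of quadratic forms over $\BQ$.

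First, I would invoke the key geometric input: by Taelman \cite{Taelman}, refined through the extended Mukai vector by Beckmann \cite{Beck}, a derived equivalence $D^b(X)\simeq D^b(X')$ between hyper-K\"ahler varieties of $K3^{[n]}$-type induces a \emph{rational} Hodge isometry
\[
\varphi\colon\widetilde H(X,\BQ)\xrightarrow{\ \simeq\ }\widetilde H(X',\BQ),
\]
i.e.\ a $\BQ$-linear isomorphism respecting both the weight-$2$ Hodge structures and the quadratic forms (the sign/orientation ambiguity when $n$ is even being harmless here). It is important that the weaker assertion $H^i(X,\BQ)\simeq H^i(X',\BQ)$ of abstractly isomorphic Hodge structures recalled in the introduction does not suffice by itself, as it does not control the BBF form; the extended Mukai lattice is exactly what repairs this.

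Next, I would split off the algebraic parts. Recall $\widetilde H(X,\BQ)=\BQ\alpha\oplus H^2(X,\BQ)\oplus\BQ\beta$, where $\langle\alpha,\beta\rangle$ is a hyperbolic plane of pure Hodge type $(1,1)$ orthogonal to $H^2(X,\BQ)$. Let $\widetilde{\mathrm{NS}}(X)_\BQ\subseteq\widetilde H(X,\BQ)$ be the span of the rational classes of Hodge type $(1,1)$, so that $\widetilde{\mathrm{NS}}(X)_\BQ=\langle\alpha,\beta\rangle\oplus\mathrm{NS}(X)_\BQ$, and let $T(X)_\BQ$ be its orthogonal complement. Since the $(2,0)$-part of $\widetilde H(X,\BC)$ is $H^{2,0}(X)\subseteq H^2(X,\BC)$, the sub-Hodge-structure $T(X)_\BQ$ is contained in $H^2(X,\BQ)$ and coincides with the transcendental part of $H^2(X,\BQ)$. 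By the Hodge index theorem the BBF form is non-degenerate on $\mathrm{NS}(X)_\BQ$, hence on $T(X)_\BQ$ and on $\widetilde{\mathrm{NS}}(X)_\BQ$, so we have orthogonal decompositions of Hodge structures
\[
\widetilde H(X,\BQ)=T(X)_\BQ\oplus\widetilde{\mathrm{NS}}(X)_\BQ,\qquad H^2(X,\BQ)=T(X)_\BQ\oplus\mathrm{NS}(X)_\BQ,
\]
and likewise for $X'$. Being a Hodge isometry, $\varphi$ carries $\widetilde{\mathrm{NS}}(X)_\BQ$ onto $\widetilde{\mathrm{NS}}(X')_\BQ$ and restricts to a Hodge isometry $T(X)_\BQ\xrightarrow{\simeq}T(X')_\BQ$.

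Finally, I would cancel. On algebraic parts $\varphi$ gives an isometry of $\BQ$-quadratic spaces $\langle\alpha,\beta\rangle\oplus\mathrm{NS}(X)_\BQ\simeq\langle\alpha,\beta\rangle\oplus\mathrm{NS}(X')_\BQ$; since the hyperbolic plane $\langle\alpha,\beta\rangle$ occurs on both sides, Witt's cancellation theorem over $\BQ$ yields an isometry $\mathrm{NS}(X)_\BQ\simeq\mathrm{NS}(X')_\BQ$, which is automatically a Hodge isometry as both sides are of Tate type. Forming the orthogonal direct sum with the Hodge isometry $T(X)_\BQ\simeq T(X')_\BQ$ from the previous step produces the desired Hodge isometry $H^2(X,\BQ)\simeq H^2(X',\BQ)$. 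The only genuinely nontrivial ingredient, and hence the main obstacle, is the first step: that derived equivalence is witnessed by a \emph{rational} Hodge isometry of extended Mukai lattices rather than by mere abstract isomorphisms $H^i(X,\BQ)\simeq H^i(X',\BQ)$. Granting this, which is the content of the Taelman--Beckmann formalism, the remaining two steps are routine linear algebra of quadratic forms and Hodge structures.
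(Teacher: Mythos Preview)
Your proposal is correct and follows essentially the same route as the paper: invoke Taelman's Hodge isometry of extended Mukai lattices, split into transcendental and algebraic parts, apply Witt cancellation to strip off the hyperbolic plane $\langle\alpha,\beta\rangle$, and reassemble. The paper cites only \cite[Theorems~4.10 and~4.11]{Taelman} for the first step (your additional mention of Beckmann is not needed here), and omits the explicit justification of non-degeneracy on $\mathrm{NS}(X)_\BQ$ that you include, but otherwise the arguments are the same.
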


\begin{proof}
    The second rational cohomology of $X, X'$ can be decomposed into algebraic and transcendental parts
    \[
    H^2(X, \BQ) = T(X)_\BQ \oplus \mathrm{NS}(X)_\BQ, \quad H^2(X', \BQ) = T(X')_\BQ \oplus \mathrm{NS}(X')_\BQ.    \]
    Recall that the extended Mukai lattice for a hyper-K\"ahler variety $X$ admits a Hodge structure of weight 2 with
    \[
    \widetilde{H}^{2,0}(X)= H^{2,0}(X), \quad \widetilde{H}^{1,1}(X)= \BC \alpha \oplus H^{1,1}(X)\oplus \BC \beta, \quad \widetilde{H}^{0,2}(X) = H^{0,2}(X).
    \]
    Since $X, X'$ are derived equivalent, by \cite[Theorems 4.10 and 4.11]{Taelman} there is a Hodge isometry between the extended Mukai lattices
    \begin{equation*}
    \widetilde{H}(X, \BQ) \simeq \widetilde{H}(X', \BQ).
    \end{equation*}
    The Hodge isometry has to preserve the transcendental and the algebraic parts, which yields a Hodge isometry 
    \begin{equation}\label{tran}
    T(X)_\BQ \simeq T(X')_\BQ,
    \end{equation}
    and an isometry (as the Hodge structure is trivial on the algebraic part)
    \[
    \mathrm{NS}(X)_\BQ \oplus U \simeq \mathrm{NS}(X')_\BQ \oplus U, \quad U = \BQ\alpha \oplus \BQ \beta.
    \]
    By the Witt cancellation theorem, the lattice $\mathrm{NS}(X)_\BQ$ is isometric to $\mathrm{NS}(X')_\BQ$. Combined with the Hodge isometry (\ref{tran}), we obtain a Hodge isometry 
    \[
    H^2(X, \BQ) = T(X)_\BQ \oplus \mathrm{NS}(X)_\BQ \simeq T(X') \oplus \mathrm{NS}(X')_\BQ = H^2(X', \BQ). \qedhere
    \]
\end{proof}

The following corollary is immediate.

\begin{cor} \label{cor2.2}
    Theorem \ref{thm0.8} implies Theorem \ref{thm0.4}.
\end{cor}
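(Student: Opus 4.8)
The plan is to chain together the two results that are now in hand. Let $X$ and $X'$ be derived equivalent hyper-K\"ahler varieties of $K3^{[n]}$-type, as in Theorem~\ref{thm0.4}. First I would apply Proposition~\ref{D=>HI}, which asserts that a derived equivalence between such varieties forces them to be Hodge isometric; in particular it produces a rational Hodge isometry
\[
H^2(X, \BQ) \simeq H^2(X', \BQ)
\]
with respect to the BBF form.

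Next I would simply feed this Hodge isometry into Theorem~\ref{thm0.8}, whose hypothesis is exactly the existence of such a Hodge isometry on second cohomology. Its conclusion, an isomorphism of homological motives $(h_{\mathrm{hom}}(X), \cup) \simeq (h_{\mathrm{hom}}(X'), \cup)$ respecting the cup-product, is verbatim the statement of Theorem~\ref{thm0.4}. Hence no further argument is needed and the corollary follows.

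There is essentially no obstacle in this particular deduction: all of the difficulty has deliberately been pushed into the two inputs. Proposition~\ref{D=>HI} rests on Taelman's Hodge isometry of the extended Mukai lattices, refined by a Witt cancellation argument to strip off the hyperbolic plane $U = \BQ\alpha \oplus \BQ\beta$; Theorem~\ref{thm0.8}, by contrast, is where the real work lies, and one expects its proof to use the cohomological Fourier-transform package of Section~\ref{sec1} (following Markman) to promote a bare second-cohomology Hodge isometry to a multiplicative isomorphism of the full homological motives.
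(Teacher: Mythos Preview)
Your proposal is correct and matches the paper's approach exactly: the paper declares the corollary ``immediate'' right after Proposition~\ref{D=>HI}, which is precisely the chaining of Proposition~\ref{D=>HI} with Theorem~\ref{thm0.8} that you spell out.
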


\subsection{Fourier autoduality} \label{sec2.2}

Let $Y$ be the variety of $K3^{[n]}$-type introduced in Section \ref{sec1.2}. The goal is to relate the two graded algebra objects (both on the $Y$ side)
\begin{equation} \label{eq:hommot}
\left(\bigoplus_{k = 0}^{2n}h_{\mathrm{hom}, 2k}(Y), \cup\right), \quad \left(\bigoplus_{k = 0}^{2n}h_{\mathrm{hom}, 4n-2k}(Y)(2n-2k), \ast\right)
\end{equation}
in the category of homological motives.

\begin{rmk}
It is an open question whether any $K3^{[n]}$-type variety $Y$ admits a derived \emph{auto\-equivalence} with Fourier--Mukai kernel $\CF$ of nonzero rank. If~so, the algebraic correspondence~$\kappa(\CF)\sqrt{\mathrm{td}_{Y\times Y}}$ induces a degree-reversing automorphism
\[
\kappa(\CF)\sqrt{\mathrm{td}_{Y\times Y}}: H^*(Y, \BQ) \to H^*(Y, \BQ), \quad H^i(Y, \BQ) \to H^{4n - i}(Y, \BQ);
\]
see \cite[Lemma~4.1(1)]{Markman}. Also unknown is whether such a correspondence necessarily conjugates the cup-product and the convolution product on $H^*(Y, \BQ)$ after normalization; see~\cite[Example~1.7 and Conjecture~1.8]{Markman}.
\end{rmk}

Nevertheless, as we observe in this section, it is always possible to construct a degree-reversing, product-conjugating self-correspondence in $H^*(Y \times Y, \BQ)$, which is further induced by algebraic cycles. The construction relies on the Lefschetz standard conjecture for varieties of~$K3^{[n]}$-type established in \cite{CM, Markman}.

Choose an ample class $L \in H^2(Y, \BQ)$ of BBF norm $(L, L) = 2d$, and consider the associated~$\mathfrak{sl}_2$-triple $(e_L, f_L, h)$ with
\[
e_L = {}\cup L: H^*(Y, \BQ) \to H^{* + 2}(Y, \BQ), \quad h|_{H^i(Y, \BQ)} = (i - 2n)\mathrm{id}.
\]
Viewed as a correspondence, the Lefschetz operator $e_L$ is given by the cycle class
\begin{equation} \label{eq:defL}
e_L = \Delta_*L \in H^{4n + 2}(Y \times Y, \BQ)
\end{equation}
where $\Delta: Y \to Y \times Y$ is the diagonal morphism. By \cite[Lemma 1.6]{Markman}, the dual Lefschetz operator $f_L$ is also algebraic, and is explicitly given by the correspondence
\begin{equation} \label{eq:defLambda}
f_L = \mathfrak{F}^{-1} \circ e_{\pm\frac{1}{rd}\widetilde{H}(\mathfrak{F})^{-1}(L)} \circ \mathfrak{F} \in H^{4n - 2}(Y \times Y, \BQ).
\end{equation}
Here we follow the notation of Section \ref{sec1}: $\mathfrak{F}$ is the Fourier transform induced by the rank $n!r^n$ twisted vector bundle on $Y \times X$, and $\widetilde{H}(\mathfrak{F})$ is as in \eqref{eq:htilde} which for $n$ even is defined up to a~sign.

Now consider the correspondence
\[
\mathfrak{F}_L := \exp(e_L) \circ \exp(-f_L) \circ \exp(e_L) \in H^*(Y \times Y, \BQ)
\]
which we call the cohomological \emph{Fourier self-transform} associated with $L$. Note that if we view the $\mathfrak{sl}_2$-triple $(e_L, f_L, h)$ as elements of the LLV algebra $\mathfrak{so}(\widetilde{H}(X, \BC))$, then $\mathfrak{F}_L$ naturally lives in the associated Lie group $\mathrm{SO}(\widetilde{H}(Y, \BC))$ via the exponential map
\[
\mathrm{exp}: \mathfrak{so}(\widetilde{H}(X, \BC)) \to \mathrm{SO}(\widetilde{H}(Y, \BC)).
\]
The latter acts on $H^*(Y, \BC)$ by integrating the~LLV algebra action
\[
\mathrm{SO}(\widetilde{H}(Y, \BC)) \to \mathrm{GL}( H^*(Y, \BC) )\]
as in (\ref{integ}). In other words, the class $\mathfrak{F}_L \in H^*(Y\times Y, \BQ)$, viewed as an element in $\mathrm{GL}( {H}^*(Y, \BC))$, lies in the image of the integration map above. The inverse of $\FF_L$ is given by
\[
\FF_L^{-1} = \exp(-e_L) \circ \exp(f_L) \circ \exp(-e_L) \in H^*(Y \times Y, \BQ).
\]

We have by construction $\mathfrak{F}_L^{-1}h\mathfrak{F}_L = -h \in \mathfrak{so}(\widetilde{H}(X, \BC))$, and therefore
\[
\FF_L^{-1} \circ h \circ \FF_L = -h \in H^*(Y \times Y, \BQ).
\]
In particular, $\FF_L$ induces a degree-reversing automorphism
\[
\FF_L: H^*(Y, \BQ) \to H^*(Y, \BQ), \quad H^i(Y, \BQ) \to H^{4n - i}(Y, \BQ).
\]
Then as in Proposition \ref{prop1.8}(a), we obtain immediately the Fourier vanishing: that for any integers~$0 \leq i, j < 2n$ we have
\[
    \FF_{L, 2i + 1} = 0 \in H^{4i + 2}(Y \times Y, \BQ), \quad \FF^{-1}_{L, 2j + 1} = 0 \in H^{4j + 2}(Y \times Y, \BQ),
\]
and for any integers $0 \leq i, j \leq 2n$, we have
\[
    \mathfrak{F}_{L, 2i} \circ \mathfrak{F}^{-1}_{L, 2j}= 0 \in H^{4i+4j-4n}(Y \times Y, \BQ), \quad  \mathfrak{F}^{-1}_{L, 2i} \circ \mathfrak{F}_{L, 2j}= 0 \in H^{4i+4j-4n}(Y \times Y, \BQ).
\]
Here $\FF_{L, i}$ and $\FF^{-1}_{L, j}$ are the summands of $\FF_L, \FF_L^{-1}$ in $H^{2i}(Y \times Y, \BQ)$ and $H^{2j}(Y \times Y, \BQ)$ respectively.

The normalizations of the Fourier self-transforms $\FF_L, \FF_L^{-1}$ take the form
\begin{equation} \label{eq:normal2}
\begin{gathered}
\widetilde{\mathfrak{F}}_L = \sum_{k} \widetilde{\mathfrak{F}}_{L, 2k}, \quad \widetilde{\mathfrak{F}}_{L, 2k}: = d^{n - k}\mathfrak{F}_{L, 2k}, \\
\widetilde{\mathfrak{F}}^{-1}_L = \sum_{k} \widetilde{\mathfrak{F}}^{-1}_{L, 2k}, \quad \widetilde{\mathfrak{F}}^{-1}_{L, 2k}:= d^{n - k}\mathfrak{F}_{L, 2k},
\end{gathered}
\end{equation}
where $2d$ is the BBF norm of $L$.

Recall the two graded algebra objects from \eqref{eq:hommot}. We remark that although the two motivic decompositions of $h_{\mathrm{hom}}(Y)$ in \eqref{eq:hommot} (the second one with appropriate Tate twists) are obtained via different projectors, they are identical decompositions of homological motives. This is simply because modulo homological equivalence, the K\"unneth projectors are unique if they~exist.

The following result is key to the proofs of the main theorems.

\begin{prop} \label{prop:ffauto}
Let $Y$ be the variety of $K3^{[n]}$-type introduced in Section \ref{sec1.2}. Then the two graded algebra objects in \eqref{eq:hommot} are isomorphic through the Fourier self-transform $\widetilde{\FF}_L$.
\end{prop}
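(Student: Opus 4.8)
The plan is to bootstrap from the intertwining property of the honest (partner) Fourier transform $\widetilde{\mathfrak{F}}$. By Proposition~\ref{prop1.8} and the proof of Proposition~\ref{prop1.6}, the normalized Fourier transform $\widetilde{\mathfrak{F}}\colon H^*(Y,\BQ)\to H^*(X,\BQ)$ satisfies $\widetilde{\mathfrak{F}}(a\ast b)=\widetilde{\mathfrak{F}}(a)\cup\widetilde{\mathfrak{F}}(b)$, i.e.\ it carries $\bigl(\bigoplus_k h_{\mathrm{hom},4n-2k}(Y)(2n-2k),\ast\bigr)$ isomorphically onto $\bigl(\bigoplus_k h_{\mathrm{hom},2k}(X),\cup\bigr)$. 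Writing $\widetilde{\mathfrak{F}}_L=\Psi\circ\widetilde{\mathfrak{F}}$ with $\Psi:=\widetilde{\mathfrak{F}}_L\circ\widetilde{\mathfrak{F}}^{-1}\colon H^*(X,\BQ)\to H^*(Y,\BQ)$, it then suffices to prove that $\Psi$ is an \emph{isomorphism of cohomology rings} for the cup products, since then $\widetilde{\mathfrak{F}}_L(a\ast b)=\Psi\bigl(\widetilde{\mathfrak{F}}(a)\cup\widetilde{\mathfrak{F}}(b)\bigr)=\widetilde{\mathfrak{F}}_L(a)\cup\widetilde{\mathfrak{F}}_L(b)$; note that $\Psi$ is automatically degree-preserving, as $\widetilde{\mathfrak{F}}$ and $\widetilde{\mathfrak{F}}_L$ are both degree-reversing. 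The remaining, object-level part of the statement — that $\widetilde{\mathfrak{F}}_L$ maps $h_{\mathrm{hom},4n-2k}(Y)(2n-2k)$ isomorphically onto $h_{\mathrm{hom},2k}(Y)$ for each $k$ — follows from the Fourier vanishing for $\widetilde{\mathfrak{F}}_L$ recorded above: repeating the argument of Proposition~\ref{prop1.3} produces projectors on $H^*(Y,\BQ)$, which by uniqueness of the K\"unneth projectors modulo homological equivalence must equal the $\Fp_{2k}$.

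To analyze $\Psi$ I would pass to the common model $S'^{[n]}$. Both $X$ and $Y$ deform to $S'^{[n]}$ via the chosen isomorphism $M_{S',v'}\simeq S'$, and the parallel transports $\mathrm{PT}^H_X,\mathrm{PT}^H_Y$ are isomorphisms of cohomology rings; hence $\Psi$ is a cup-product isomorphism if and only if $\Psi_0:=\mathrm{PT}^H_Y\circ\Psi\circ(\mathrm{PT}^H_X)^{-1}=\widetilde{\mathfrak{F}}_L^{S'}\circ(\widetilde{\mathfrak{F}}^{S'})^{-1}$ is a cup-product automorphism of $H^*(S'^{[n]},\BC)$, where $(-)^{S'}$ denotes the transported operators. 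By Lemma~\ref{lem1.10} together with the normalization~\eqref{normal} (which composes with an element of $\mathrm{SO}(\widetilde{H})$ scaling $H^{2k}(S'^{[n]})$ by $r^{\,k-n}$), the transported $\widetilde{\mathfrak{F}}^{S'}$ equals $\mathrm{Int}(\pm\widetilde{\varphi})$ for a grading-reversing isometry $\widetilde{\varphi}\in O(\widetilde{H}(S'^{[n]},\BC))$, with $\mathrm{Int}$ the integration map~\eqref{integ2}; and by construction $\widetilde{\mathfrak{F}}_L^{S'}=\mathrm{Int}(\widetilde{g}_{L'})$, where $\widetilde{g}_{L'}\in\mathrm{SO}(\widetilde{H}(S'^{[n]},\BC))$ is the normalization of the Weyl element $\exp(e_{L'})\exp(-f_{L'})\exp(e_{L'})$ of the $\mathfrak{sl}_2$-triple attached to $L'=\mathrm{PT}_Y(L)$. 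Therefore $\Psi_0=\mathrm{Int}(\pm\theta)$ with $\theta:=\widetilde{g}_{L'}\circ\widetilde{\varphi}^{-1}$ grading-preserving.

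A grading-preserving $\theta\in O(\widetilde{H}(S'^{[n]},\BC))$ preserves the orthogonal splitting $\widetilde{H}=(\BC\alpha\oplus\BC\beta)\oplus H^2(S'^{[n]},\BC)$, acting on the hyperbolic plane $U=\BC\alpha\oplus\BC\beta$ by some $\mathrm{diag}(\lambda,\lambda^{-1})$ and on $H^2(S'^{[n]},\BC)$ orthogonally. A direct computation on the three-dimensional $\mathfrak{sl}_2$-submodule $\BC\alpha\oplus\BC L'\oplus\BC\beta$ shows that $\widetilde{g}_{L'}$ exchanges $\alpha$ and $\beta$, sends $L'\mapsto -L'$, and is the identity on $(L')^\perp\cap H^2(S'^{[n]},\BC)$; in particular it acts on $U$ by the standard swap. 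The main obstacle is to show the \emph{same} for $\widetilde{\varphi}$: the factor $r^{\,k-n}$ inserted by the normalization~\eqref{normal} is exactly what forces $\widetilde{\varphi}$ to act on $U$ by the standard swap as well, up to a sign that is relevant only when $n$ is even — this is the content of Markman's normalization results \cite[Corollary~1.10]{Markman} (compare the proof of Proposition~\ref{prop1.8}(b), and the sign conventions of \cite{Markman}). Granting this, $\theta$ acts on $U$ by $\pm\mathrm{id}_U$ and preserves $H^2(S'^{[n]},\BC)$; in the $(-)$ case — which occurs only for $n$ even — the extra factor $\mathrm{Int}(-\mathrm{id}_U\oplus\mathrm{id}_{H^2})$ scales $H^{2k}(S'^{[n]})$ by $(-1)^{k-n}$ and still respects the cup product because $n$ is even, so in all cases it is enough to know that $\mathrm{Int}(g)$ respects the cup product for every $g\in O(H^2(S'^{[n]},\BC))$. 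This in turn follows from Markman's monodromy theorem: the monodromy operators of $S'^{[n]}$ act on $H^*(S'^{[n]},\BQ)$ by cup-product automorphisms, they agree there with $\mathrm{Int}$, and they are Zariski-dense in $O(H^2(S'^{[n]},\BC))$; since ``$\mathrm{Int}(g)$ respects $\cup$'' is a Zariski-closed condition, it holds for all such $g$. Hence $\Psi_0=\mathrm{Int}(\pm\theta)$ — and therefore $\Psi$, and $\widetilde{\mathfrak{F}}_L$ — is multiplicative as required; tracking the Tate twists, which record the degree shifts exactly as in Proposition~\ref{prop1.6}(b)(c), finishes the proof.
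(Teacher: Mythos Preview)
Your argument is a genuinely different route from the paper's, and while the overall strategy is sound, it is both longer and has soft spots that the paper's approach avoids.

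The paper does not factor $\widetilde{\FF}_L$ through the partner transform $\widetilde{\FF}$ at all. Instead, after recording the Fourier vanishing and the uniqueness of K\"unneth projectors (your ``object-level'' paragraph, which matches the paper), it appeals directly to \cite[Lemma~10.4]{Markman}: \emph{any} $g\in O(\widetilde{H}(Y,\BC))$ with $g^{-1}hg=-h$ induces, after the normalization of \eqref{eq:normal2}, an automorphism of $H^*(Y,\BC)$ conjugating $\cup$ and $\ast$. Since $\FF_L$ satisfies $\FF_L^{-1}h\FF_L=-h$ by construction, this gives the product compatibility in one stroke, and then the proof of Proposition~\ref{prop1.6} is repeated verbatim to obtain the commutative diagrams.

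Your approach instead writes $\widetilde{\FF}_L=\Psi\circ\widetilde{\FF}$ and tries to show that $\Psi$ is a $\cup$-ring isomorphism by analyzing $\theta=\widetilde{g}_{L'}\circ\widetilde{\varphi}^{-1}$ on $\widetilde{H}$. Two of your key assertions are not adequately justified. First, the claim that the normalized $\widetilde{\varphi}$ acts on $U=\BC\alpha\oplus\BC\beta$ by the standard swap (up to sign) is not what \cite[Corollary~1.10]{Markman} says; that corollary is about matching convolution products, and extracting the precise action on $U$ from it amounts to re-deriving the content of \cite[Lemma~10.4]{Markman}---at which point you may as well apply that lemma directly to $\FF_L$. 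Second, the assertion ``they agree there with $\mathrm{Int}$'' for monodromy operators on $H^*(S'^{[n]},\BQ)$ is a nontrivial fact about the LLV representation (that monodromy on $H^*$ is determined by its restriction to $H^2$ via the integrated action); it is true, but it deserves a reference rather than a parenthetical. Finally, note that your Zariski-density argument is unnecessary even on its own terms: the degree-zero part $\mathfrak{so}(H^2)\subset\mathfrak{so}(\widetilde{H})$ acts on $H^*$ by derivations of $\cup$, so $\mathrm{SO}(H^2)$ already acts by ring automorphisms without any density argument.

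In short: your factorization works, but the hard step you isolate (the action of $\widetilde{\varphi}$ on $U$) is exactly what \cite[Lemma~10.4]{Markman} packages, and the paper simply quotes that lemma.
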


\begin{proof}
By the uniqueness of the K\"unneth projectors, the motivic decompositions of $h_{\mathrm{hom}}(Y)$ in~\eqref{eq:hommot} coincide with the one induced by the orthogonal projectors
\[
\FF_{L, 2k} \circ \FF^{-1}_{L, 4n - 2k} \in H^{4n}(Y \times Y, \BQ), \quad 0 \leq k \leq 2n,
\]
or the one induced by the orthogonal projectors
\[
\FF^{-1}_{L, 2k} \circ \FF_{L, 4n - 2k} \in H^{4n}(Y \times Y, \BQ), \quad 0 \leq k \leq 2n.
\]
Then, repeating the proof of Proposition \ref{prop1.3}, we obtain mutually inverse isomorphisms of homological motives
\[
h_{\mathrm{hom}, 4n - 2k}(Y)(2n - 2k) \xrightleftharpoons[~~~\mathfrak{F}^{-1}_{L, 4n-2k}~~~]{~~~\mathfrak{F}_{L, 2k}~~~} h_{\mathrm{hom}, 2k}(Y).
\]

Concerning the two product structures, we use \cite[Lemma 10.4]{Markman} which says that any element~$g \in O(\widetilde{H}(Y, \BC))$ satisfying 
\[
g^{-1}  h g = -h \in \mathfrak{so}(\widetilde{H}(Y,\BC))
\]
induces, after normalization, an automorphism of $H^*(Y, \BC)$ which conjugates the cup-product and the convolution product. In particular, this applies to $g = \FF_L$ the Fourier self-transform and its normalization~$\widetilde{\FF}_L$ in \eqref{eq:normal2}. Repeating again the proof of Proposition \ref{prop1.6}, we find commutative diagrams
\begin{equation*}
\begin{tikzcd}
h_{\mathrm{hom}, 4n - 2i}(Y)(2n - 2i) \otimes h_{\mathrm{hom}, 4n - 2j}(Y)(2n - 2j) \arrow[r, "\ast"] \arrow[d, "\widetilde{\mathfrak{F}}_{L, 2i} \otimes \widetilde{\mathfrak{F}}_{L, 2j}"] & h_{\mathrm{hom}, 4n - 2i - 2j}(Y)(2n - 2i - 2j) \arrow[d, "\widetilde{\mathfrak{F}}_{L, 2i + 2j}"] \\
h_{\mathrm{hom}, 2i}(Y) \otimes h_{\mathrm{hom}, 2j}(Y) \arrow[r, "\cup"] & h_{\mathrm{hom}, 2i+2j}(Y)
\end{tikzcd}
\end{equation*}
with the vertical arrows isomorphisms of homological motives. The direct sum of the diagrams above yields the desired isomorphism of graded algebra objects. 
\end{proof}

\begin{cor} \label{cor:isomxy}
Let $X, Y$ be varieties of $K3^{[n]}$-type which admit a (twisted) derived equivalence induced by a projectively hyperholomorphic bundle as in \eqref{FM}. Then there is an isomorphism of homological motives which respects the cup-product,
\[
(h_{\mathrm{hom}}(X), \cup) \simeq (h_{\mathrm{hom}}(Y), \cup).
\]
\end{cor}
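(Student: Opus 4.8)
The plan is to chain together the Fourier transform $\widetilde{\FF}$ of Section \ref{sec1.2}, which connects $X$ and $Y$, with the Fourier self-transform $\widetilde{\FF}_L$ of Proposition \ref{prop:ffauto}, which relates the cup-product and convolution product on $h_{\mathrm{hom}}(Y)$. Recall from Section \ref{sec1.4} (via Proposition \ref{prop1.8}) that in cohomology the Fourier transform $\widetilde{\FF}$ induces, for each $k$, mutually inverse isomorphisms of homological motives
\[
h_{\mathrm{hom}, 4n - 2k}(Y)(2n - 2k) \xrightleftharpoons[~~~\widetilde{\FF}^{-1}_{4n - 2k}~~~]{~~~\widetilde{\FF}_{2k}~~~} h_{\mathrm{hom}, 2k}(X),
\]
and moreover the graded algebra object $(\bigoplus_k h_{\mathrm{hom}, 2k}(X), \cup)$ is isomorphic through $\widetilde{\FF}$ to $(\bigoplus_k h_{\mathrm{hom}, 4n-2k}(Y)(2n-2k), \ast)$ with $\ast$ the convolution product intrinsic to $Y$. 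This is exactly the content of the paragraph following Proposition \ref{prop1.8}, applied to the present $X, Y$.

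Next I would invoke Proposition \ref{prop:ffauto}, which provides an isomorphism of graded algebra objects between $(\bigoplus_k h_{\mathrm{hom}, 4n-2k}(Y)(2n-2k), \ast)$ and $(\bigoplus_k h_{\mathrm{hom}, 2k}(Y), \cup)$, realized by the Fourier self-transform $\widetilde{\FF}_L$ attached to any ample class $L \in H^2(Y, \BQ)$. Composing the two isomorphisms, $\widetilde{\FF}_L \circ \widetilde{\FF}$ gives an isomorphism of graded algebra objects
\[
\left(\bigoplus_{k=0}^{2n} h_{\mathrm{hom}, 2k}(X), \cup\right) \xrightarrow{~\simeq~} \left(\bigoplus_{k=0}^{2n} h_{\mathrm{hom}, 2k}(Y), \cup\right).
\]
In particular, forgetting the grading, this is an isomorphism of homological motives $(h_{\mathrm{hom}}(X), \cup) \simeq (h_{\mathrm{hom}}(Y), \cup)$ respecting the cup-product, as desired. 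One should note that the correspondence $\widetilde{\FF}_L \circ \widetilde{\FF} \in H^*(X \times Y, \BQ)$ is a priori of mixed degree, but its component $\bigoplus_k (\widetilde{\FF}_L \circ \widetilde{\FF})_{2k}$ in the appropriate degrees already realizes the isomorphism on each graded piece; the pure-degree correspondence inducing $h_{\mathrm{hom}}(X) \simeq h_{\mathrm{hom}}(Y)$ is then $\sum_k \Fp^\vee_{2k} \circ (\widetilde{\FF}_L \circ \widetilde{\FF})_{\bullet} \circ \Fp_{2k}$ assembled from the graded components, which lands in $H^{4n}(X \times Y, \BQ)$.

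There is no serious obstacle here: the corollary is essentially bookkeeping, combining the Fourier transform package of Section \ref{sec1.4} (all the hard cohomological input from Markman \cite{Markman}) with the autoduality Proposition \ref{prop:ffauto}. The only point requiring a little care is matching conventions — in particular, checking that the convolution product $\ast$ appearing as the target of $\widetilde{\FF}$ on the $Y$ side (which is intrinsic to $Y$ by Proposition \ref{prop1.8}(b), thanks to the normalization \eqref{normal} by $r^{k-n}$) is literally the same product as the one appearing in the source of $\widetilde{\FF}_L$ in Proposition \ref{prop:ffauto}. Both are Markman's intrinsic cohomological convolution product on $H^*(Y, \BQ)$, so they agree on the nose, and the composition is legitimate. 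Finally, since $X, Y$ have no odd cohomology, the homological motives are even and the graded isomorphism automatically extends over the full motive.
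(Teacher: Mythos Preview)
Your proof is correct and follows exactly the paper's approach: compose the isomorphism $(h_{\mathrm{hom}}(X),\cup)\simeq(\bigoplus_k h_{\mathrm{hom},4n-2k}(Y)(2n-2k),\ast)$ from Propositions~\ref{prop1.6} and~\ref{prop1.8} with the isomorphism $(h_{\mathrm{hom}}(Y),\cup)\simeq(\bigoplus_k h_{\mathrm{hom},4n-2k}(Y)(2n-2k),\ast)$ from Proposition~\ref{prop:ffauto}. The only slip is directional: since $\widetilde{\FF}\in\mathrm{CH}^*(Y\times X)$ is a morphism $h(Y)\to h(X)$, the composite realizing $h_{\mathrm{hom}}(X)\to h_{\mathrm{hom}}(Y)$ should be $\widetilde{\FF}_L\circ\widetilde{\FF}^{-1}$ rather than $\widetilde{\FF}_L\circ\widetilde{\FF}$.
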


\begin{proof}
The isomorphism is obtained as the composition of the isomorphism
\begin{equation} \label{eq:isom1}
\left(\bigoplus_{k = 0}^{2n}h_{\mathrm{hom}, 2k}(X), \cup\right) \simeq \left(\bigoplus_{k = 0}^{2n}h_{\mathrm{hom}, 4n-2k}(Y)(2n-2k), \ast\right)
\end{equation}
shown in Propositions \ref{prop1.6} and \ref{prop1.8}, and the isomorphism
\begin{equation} \label{eq:isom2}
\left(\bigoplus_{k = 0}^{2n}h_{\mathrm{hom}, 2k}(Y), \cup\right) \simeq \left(\bigoplus_{k = 0}^{2n}h_{\mathrm{hom}, 4n-2k}(Y)(2n-2k), \ast\right)
\end{equation}
shown in Proposition \ref{prop:ffauto}.
\end{proof}

\begin{rmk} \label{rmkfrob}
    We explain here that the isomorphism we construct for Corollary \ref{cor:isomxy} sends the class of a point $[\mathrm{pt}] \in H^{4n}(X, \BQ)$ to $[\mathrm{pt}] \in H^{4n}(Y, \BQ)$. This is deduced from the following~facts.
\begin{enumerate}
\item[(a)] The unit of the convolution product on $H^*(Y, \BQ)$ is $[\mathrm{pt}]/n!$; this is \cite[Lemma 10.3]{Markman}.

\item[(b)] The Fourier transform $\widetilde{\FF}$ in cohomology, per our convention, sends the convolution product on $H^*(Y, \BQ)$ to the cup-product on $H^*(X, \BQ)$. At the same time it sends the cup-product on $H^*(Y, \BQ)$ to the convolution product on $H^*(X, \BQ)$, the latter being the convention adopted by Markman. This is because we have
\[
\FF^{-1} = {^t\FF} \in H^*(X \times Y, \BQ)
\]
by \cite[Corollary 4.2]{Markman}, which uses the vanishing of the odd cohomology.

\item[(c)] The Fourier self-transform $\widetilde{\FF}_L$ also interchanges the cup-product and the convolution product on $H^*(Y, \BQ)$. Again this is because
\[
\FF_L^{-1} = {\FF}_L \in H^*(Y \times Y, \BQ).
\]
In fact, we have by definition $(\FF_L)^2 = -\mathrm{id} \in \mathrm{SL_2}$, which gets sent to $\mathrm{id} \in \mathrm{SO}(\widetilde{H}(Y, \BC))$ since $\widetilde{H}(Y, \BC)$ has even weights.
\end{enumerate}
Therefore, the composition of the isomorphism \eqref{eq:isom1} and \eqref{eq:isom2} sends $[\mathrm{pt}] \in H^{4n}(X, \BQ)$ (which is $n!$ times the unit of the convolution product on $H^*(X, \BQ)$) first to $n![Y] \in H^0(Y, \BQ)$ and then to $[\mathrm{pt}] \in H^{4n}(Y, \BQ)$.
\end{rmk}

\subsection{Proofs of Theorems \ref{thm0.5} and \ref{thm0.8}} \label{sec2.3}

In view of Corollary \ref{cor2.2}, it remains to prove Theorems \ref{thm0.5} and \ref{thm0.8}. The proof of Theorem \ref{thm0.8} relies on the following characterization, due to Markman \cite[Theorem 1.4]{Markman}, of Hodge isometric varieties of $K3^{[n]}$-type.

\begin{thm}[Markman \cite{Markman}] \label{thm:classify}
Let $X, X'$ be Hodge isometric varieties of $K3^{[n]}$-type. Then there exists a finite sequence of varieties of $K3^{[n]}$-type $X = X_1, X_2, \ldots, X_t = X'$ such that each pair $X_i, X_{i + 1}$ satisfies either
\begin{enumerate}
\item[(a)] there is a derived equivalence $D^b(X_i, \alpha_{X_i}) \simeq D^b(X_{i + 1}, \alpha_{X_{i + 1}})$ induced by a projectively hyperholomorphic bundle as in \eqref{FM}, or
\item[(b)] there is a parallel transport $H^2(X_i, \BZ) \to H^2(X_{i + 1}, \BZ)$ preserving the Hodge structures.
\end{enumerate}
\end{thm}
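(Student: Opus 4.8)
The plan is to reduce the statement, by a chain of deformations and birational modifications, to assertions about moduli spaces of (twisted) sheaves on $K3$ surfaces, where the type-(a) links are supplied by the Bridgeland--King--Reid correspondence and its deformations recalled in Section~\ref{sec1.2}. \textbf{Step 1 (passing to the extended Mukai lattice).} First I would extend the given Hodge isometry $H^2(X,\BQ)\simeq H^2(X',\BQ)$ to a Hodge isometry $\widetilde H(X,\BQ)\simeq\widetilde H(X',\BQ)$ by acting as $\pm\mathrm{id}$ on the hyperbolic summand $\BQ\alpha\oplus\BQ\beta$. Using Markman's computation of the monodromy group of $K3^{[n]}$-type varieties, one may compose with monodromy operators --- which are realized by parallel transports and are hence moves of type (b) --- to arrange that this isometry is \emph{integral} and preserves the distinguished isotropic Mukai line. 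By the work of Taelman~\cite{Taelman}, Beckmann~\cite{Beck}, and the theory of Bridgeland moduli spaces, this lattice datum shows that $X$ is birational to a moduli space $M_{(S,\alpha)}(\sigma,\mathbf v)$ of $\sigma$-stable objects on a twisted $K3$ surface $(S,\alpha)$ for a $\mathbf v$-generic stability condition $\sigma$, and likewise for $X'$.

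\textbf{Step 2 (reduction to untwisted moduli on a fixed $K3$, and moving among them).} Next, deforming $X$ in a connected family of $K3^{[n]}$-type varieties, one can bring the Mukai line into the position of a genuine primitive Mukai vector of positive square on a projective $K3$ surface $S$ (of, say, Picard rank one): the deformation is a type-(b) link to a nearby variety, which is in turn birational --- hence type-(b)-linked, by the global Torelli theorem --- to $M_S(\mathbf v)$. For a \emph{fixed} projective $S$ and primitive $\mathbf v,\mathbf v'$ of the same square and divisibility, I would then connect $M_S(\mathbf v)$ to $M_S(\mathbf v')$ by type-(a) moves: when $\mathbf v$ is isotropic of the form $(r,mH,s)$ with all stable sheaves vector bundles this is precisely the hyperholomorphic situation of Section~\ref{sec1.2}, and the general case follows by composing with the correspondences induced by generators of $\mathrm{Auteq}\,D^b(S)$ (line-bundle twists, shifts, spherical twists), each of which deforms to a hyperholomorphic-bundle equivalence, together with the transitivity of the induced action on primitive Mukai vectors of fixed square and divisibility.

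\textbf{Step 3 (assembly).} Combining the above: link $X$ to some $M_S(\mathbf v)$ via moves of type (b) and (a); transport, along the integral Hodge isometry of Step~1, the lattice datum of $X'$ to a moduli space $M_S(\mathbf v')$ on the \emph{same} $S$ with ${\mathbf v'}^2=\mathbf v^2$ --- here a Witt-cancellation argument on the algebraic parts, in the spirit of the proof of Proposition~\ref{D=>HI}, lets one retain the same underlying $K3$ surface; link $M_S(\mathbf v')$ to $X'$ the same way; and bridge $M_S(\mathbf v)$ and $M_S(\mathbf v')$ by Step~2. Concatenating produces the required finite sequence $X=X_1,\dots,X_t=X'$.

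\textbf{Main obstacle.} I expect the delicate point to be Step~2 together with the $S$-matching in Step~3: one must show that a prescribed weight-two Hodge structure on an extended Mukai lattice is realized by a (twisted) $K3$ surface compatibly with the position of the Mukai line, and that two Hodge isometric $X,X'$ can be brought --- through Hodge-structure-preserving deformations and birational maps \emph{only} --- to moduli spaces on one common projective $K3$. This amounts to a careful analysis of the monodromy orbits on the period domain of $K3^{[n]}$-type, and it is here that the argument genuinely uses the Hodge-theoretic hypothesis rather than a bare lattice isometry.
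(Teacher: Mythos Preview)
The paper does not itself prove this theorem; it is quoted from Markman \cite[Theorem 1.4]{Markman} and used as a black box in the proof of Theorem \ref{thm0.8}. So your proposal should be compared to Markman's argument, not to anything in the present paper.

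There is a genuine gap in your Step~1. You propose to compose the given rational Hodge isometry with monodromy operators so that it becomes \emph{integral}. This cannot work: monodromy operators are integral, so composing with them cannot clear the denominators of a genuinely rational isometry. In Markman's scheme the rational part is absorbed precisely by the type-(a) moves, not by the type-(b) ones: the kappa class of a projectively hyperholomorphic bundle induces a Hodge isometry on $H^2$ with $\BQ$-coefficients that is typically \emph{not} integral (its matrix entries involve $1/r$ where $n!r^n$ is the rank). Markman's actual proof proceeds via a double-coset decomposition of the rational orthogonal group of the extended Mukai lattice, with representatives of the non-trivial cosets realized by the hyperholomorphic-bundle isometries and the integral/monodromy part realized by parallel transports. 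Your outline never produces these rational type-(a) isometries in a controlled way, so the chain you build cannot reach an arbitrary rational Hodge isometry.

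There is also a smaller confusion in Step~2: you speak of bringing the Mukai line to a ``primitive Mukai vector of positive square'' and then immediately invoke the case ``when $\mathbf v$ is isotropic''; these are incompatible. In the construction recalled in Section~\ref{sec1.2} it is the isotropic vector $v_0$ on an auxiliary $K3$ surface $S_0$ that yields the hyperholomorphic bundle on a product of Hilbert schemes, and the passage from there to general $K3^{[n]}$-type varieties is by deformation along twistor paths --- it is not organized as moving among moduli spaces $M_S(\mathbf v)$ for varying $\mathbf v$ on a fixed $S$.
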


\begin{proof}[Proof of Theorem \ref{thm0.8}]
By Theorem \ref{thm:classify}, it suffices to construct an isomorphism of homological motives respecting the cup-product for each pair $X_i, X_{i + 1}$ in the sequence. In case (a) the isomorphism is given by Corollary \ref{cor:isomxy}. In case (b) the two varieties are necessarily birational by the global Torelli theorem \cite[Theorem 1.3]{Torelli}. We then conclude by \cite{Riess}.
\end{proof}

We move on to Theorem \ref{thm0.5}. Let $S$ be a projective $K3$ surface, and let $v \in \widetilde{H}(S, \BZ)$ be a primitive Mukai vector with $v^2 = 2n-2\geq 0$. The moduli space $M_{S,v}$ parameterizes $H$-stable sheaves on $S$ with Mukai vector $v$, where the polarization $H$ is $v$-generic. We now prove that there is an isomorphism of homological motives which respects the cup-product,
\begin{equation*}\label{main_thm_sec2}
(h_{\mathrm{hom}}(S^{[n]}), \cup) \simeq (h_{\mathrm{hom}}(M_{S,v}), \cup).
\end{equation*}
This clearly implies Theorem \ref{thm0.5}.

Again by \cite{Riess}, it suffices to consider the case when $M_{S,v}$ is not birational to~$S^{[n]}$. In this case, Zhang proved in \cite{Zhang} that there is a derived equivalence
\begin{equation}\label{Zhang_FM}
D^b(S^{[n]}) \simeq D^b(M_{S,v}, \theta_{v}),
\end{equation}
where $\theta_v$ is a particular Brauer class on $M_{S,v}$. More precisely, Zhang's equivalence (\ref{Zhang_FM}) can be realized as the composition~of 
\begin{equation}\label{Zhang_FM2}
\Phi_{(\CE,\theta_v)}: D^b(S^{[n]}) \xrightarrow{\simeq} D^b(M'_{S,v}, \theta_v)
\end{equation}
and
\begin{equation}\label{D-equiv}
D^b(M'_{S,v}, \theta_{v}) \xrightarrow{\simeq} D^b(M_{S,v}, \theta_v).
\end{equation}
Here $M'_{S,v}$ is a hyper-K\"ahler variety birational to $M_{S,v}$, and we use the same notation $\theta_v$ to denote the Brauer class on $M'_{S,v}$ obtained via the birational map. The equivalence (\ref{Zhang_FM2}) is a special case of~(\ref{FM}) induced by a projectively hyperholomorphic bundle $\CE$ on $S^{[n]} \times M'_{S,v}$, and~(\ref{D-equiv}) is given by the proven (twisted) $D$-equivalence conjecture \cite{DHL, MSYZ} for $K3^{[n]}$-type.

\begin{proof}[Proof of Theorem \ref{thm0.5}]
Again it suffices to construct an isomorphism of homological motives respecting the cup-product for the pair $S^{[n]}, M'_{S, v}$, and for the pair $M'_{S, v}, M_{S,v}$ respectively. The former is given by Corollary \ref{cor:isomxy}, while the latter follows from \cite{Riess}.
\end{proof}

\begin{rmk}
    Another approach is to reduce Theorem \ref{thm0.5} to Theorem \ref{thm0.8}: this is due to the fact that $M_{S,v}$ and $M_{S,v'}$ are Hodge isometric as long as $v^2=v'^2$. The proof we gave above relies on the recent work of Zhang \cite{Zhang} and provides an explicit isomorphism connecting $S^{[n]}$ and $M_{S,v}$; this isomorphism will be used in the proof of Theorem \ref{thm0.7}. 
\end{rmk}

\section{Generically defined kappa classes}\label{sec2}

Throughout this section, we assume that $(S,H)$ is a \emph{fixed} $K3$ surface of genus $g \geq 2$ with~$\mathrm{Pic}(S)=\BZ H$. Let 
\[
v = (r, mH, s) \in \widetilde{H}(S, \BZ), \quad r, m, s \in \BZ
\]
be a primitive Mukai vector with $v^2 = 2n-2\geq 0$. The main purpose of this section is to construct generically defined kappa class; see Theorem \ref{thm_main2}.

\subsection{Moduli of polarized $K3$ surfaces}\label{sec2.1}

Recall the derived equivalences (\ref{Zhang_FM2}) and (\ref{D-equiv}) for the fixed polarized $K3$ surface $(S,H)$. Now we vary $(S,H)$ over the moduli stack $\CX_g$. We~denote
\[
\CX^*_g \subset \CX_g
\]
the open substack formed by polarized $K3$ surfaces whose polarization is general with respect to the Mukai vector $v$. Clearly all polarized $K3$ surfaces of Picard rank $1$ lie in $\CX^*_g$. We assume that the original $K3$ surface $(S,H)$ is represented by a point $P \in \CX^*_g$. The universal family~$\CS_g \to \CX^*_g$ carries a relative polarization $\CH \in \mathrm{Pic}(\CS_g)$ of genus $g$, and we have the proper and smooth families
\begin{equation}\label{families}
\CS_g^{[n]} \to \CX^*_g, \quad \CM_{g,v} \to \CX^*_g
\end{equation}
formed by the Hilbert schemes of points and the moduli of stable sheaves with respect to the Mukai vector $v$ respectively. Here the Mukai vector and the polarization are defined relatively over $\CX_g$ by the relative polarization $\CH$; the general assumption of the polarization ensures that~$\CM_{g,v} \to \CX^\ast_g$ is smooth with hyper-K\"ahler fibers. The fibers of (\ref{families}) over $P \in \CX^*_g$ recover the hyper-K\"ahler varieties $S^{[n]}$ and $M_{S,v}$ respectively.

We may modify the family $\CM_{g,v} \to \CX^*_g$ to include the birational model $M'_{S,v}$ used in (\ref{Zhang_FM2}) and (\ref{D-equiv}) as a fiber.

\begin{prop}\label{prop2.1}
After shrinking $\CX^*_g$ to an open Deligne--Mumford substack $\CX^\circ_g \subset \CX^*_g$ containing $P$, there is a proper and smooth morphism 
\[
\Pi_M: \CM'_{g,v} \to \CX^\circ_g
\]
satisfying
\begin{enumerate}
    \item[(a)] the fiber over $P\in \CX^\circ$ recovers the birational model $M'_{S,v}$ of $M_{S,v}$, and 
    \item[(b)] each fiber of $\Pi_M$ is birational to the corresponding fiber of the family \mbox{$\CM_{g,v} \to \CX^\circ_g$} given in \eqref{families}.
\end{enumerate}
\end{prop}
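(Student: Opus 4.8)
The plan is to realize $M'_{S,v}$ as a moduli space of Bridgeland-stable objects on the varying $K3$ surface and then to take the associated relative moduli space. By the construction of $M'_{S,v}$ recalled in Section~\ref{New_Sec2}, together with the description of the birational geometry of moduli of sheaves on a $K3$ surface due to Bayer--Macr\`i, the hyper-K\"ahler variety $M'_{S,v}$ --- being a smooth minimal model of $M_{S,v}$ --- is isomorphic to a moduli space $M_S(\sigma_0,v)$ of $\sigma_0$-stable objects of Mukai class $v$, for a stability condition $\sigma_0$ in the distinguished component $\mathrm{Stab}^\dagger(S)$ that is generic with respect to $v$; equivalently, $M'_{S,v}$ is obtained from the Gieseker moduli space $M_{S,v}$ by the finite chain of wall-crossing flops separating the Gieseker chamber from the chamber of $\sigma_0$.

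First I would spread the stability condition out. Upgrading $S$ to the universal family $\CS_g\to\CX^*_g$, the large-volume stability conditions on the fibres deform over all of $\CX^*_g$ via the relative polarization $\CH$, and the chamber of $\sigma_0$ is reached from them by crossing finitely many walls cut out by Mukai classes that are already defined relatively over $\CX^*_g$. Invoking the theory of stability conditions in families, one obtains --- after shrinking $\CX^*_g$ to an open (hence Deligne--Mumford) substack $\CX^\circ_g$ containing $P$ --- a relative stability condition $\underline\sigma$ on $D^b(\CS_g/\CX^\circ_g)$ whose fibre over $P$ is $\sigma_0$ and whose fibre over every geometric point $t\in\CX^\circ_g$ is generic with respect to $v$. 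The last two conditions are open on the base, so the shrinking is harmless. From the birational-geometry side, this is exactly the statement that running the relative minimal model program on $\CM_{g,v}\to\CX^*_g$ along the relatively defined nef class governing the flop relating $M_{S,v}$ and $M'_{S,v}$ over $P$ produces, after shrinking, the required relative contractions and relative flops, compatibly with passage to fibres.

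Next I would set $\Pi_M:\CM'_{g,v}\to\CX^\circ_g$ to be the relative moduli space of $\underline\sigma$-stable objects of class $v$ --- equivalently, the relative wall-crossing flop of the family $\CM_{g,v}$ over $\CX^\circ_g$. Since $v$ is primitive and $\underline\sigma$ is fibrewise generic, each fibre $M_{S_t}(\underline\sigma_t,v)$ is a smooth projective hyper-K\"ahler variety of dimension $2n$; hence $\Pi_M$ is proper --- in fact fibrewise projective, by the relative form of Bayer--Macr\`i's projectivity theorem --- and it is smooth, by deformation invariance of the (unobstructed) deformation theory of stable objects on $K3$ surfaces over the smooth base $\CX^\circ_g$ (from the birational side, smoothness of the total space is preserved under the wall-crossing flops). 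By construction the fibre over $P$ is $M_S(\sigma_0,v)=M'_{S,v}$, giving (a); and for each $t$ the fibre $M_{S_t}(\underline\sigma_t,v)$ is birational, again by Bayer--Macr\`i, to the fibre over $t$ of the family $\CM_{g,v}\to\CX^\circ_g$ of \eqref{families}, giving (b).

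The hard part is the first step: controlling how the walls for $v$ in $\mathrm{Stab}^\dagger$ move as the polarized $K3$ surface varies, and checking that the chamber of $\sigma_0$ survives over a Zariski-open locus through $P$ --- equivalently, that the relative wall-crossing contractions and flops of $\CM_{g,v}/\CX^\circ_g$ exist and are compatible with restriction to fibres. Granting this, the properness and smoothness of $\Pi_M$, as well as (a) and (b), are formal consequences of the standard structural properties of moduli of (Bridgeland-)stable objects on $K3$ surfaces over a smooth base.
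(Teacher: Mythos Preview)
Your proposal is correct and follows essentially the same approach as the paper: realize $M'_{S,v}$ as a moduli space of Bridgeland-stable objects via Bayer--Macr\`i \cite{BM}, then globalize using the theory of stability conditions in families \cite{Six}. The paper's proof is far terser---it simply cites \cite[Theorem 29.4(2)]{Six} for the spreading-out step you spell out in detail---but the strategy is identical.
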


\begin{proof}
By \cite{BM}, the birational model $M'_{S,v}$ of $M_{S,v}$ is realized as the moduli space of stable objects on $S$ with Mukai vector $v$ with respect to a $v$-generic Bridgeland stability condition. The proposition follows from the general construction of families of Bridgeland stability conditions~\cite{Six}; see the proof of \cite[Theorem 29.4(2)]{Six}.
\end{proof}

We consider the following family given by (\ref{families}) and Proposition \ref{prop2.1},
\begin{equation}\label{family2}
\Pi: \CS_g^{[n]}\times_{\CX^\circ} \CM'_{g,v} \to \CX^\circ_g,
\end{equation}
whose fiber over $P \in \CX^\circ_g$ is the product $S^{[n]} \times M'_{S,v}$. We say that a class
\[
\gamma \in H^*(S^{[n]} \times M'_{S,v}, \BQ)
\]
is \emph{monodromy invariant}, if it is invariant under the geometric monodromy action for the family~(\ref{family2}).

\begin{lem}\label{lem2.2}
    All the classes in 
    \[
    \mathrm{Pic}(S^{[n]} \times M'_{S,v})_\BQ = \mathrm{Pic}(S^{[n]})_\BQ \oplus \mathrm{Pic}(M'_{S,v})_\BQ \subset H^2(S^{[n]} \times M'_{S,v}, \BQ)
    \]
    are monodromy invariant.
\end{lem}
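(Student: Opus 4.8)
The plan is to identify the Picard group of the product with the Néron--Severi part of the second cohomology and then show that, for a $K3$ surface of Picard rank $1$, the relevant $\mathrm{NS}$-classes are already defined over the base and hence automatically monodromy invariant. First I would use the Künneth decomposition $H^2(S^{[n]} \times M'_{S,v}, \BQ) = H^2(S^{[n]}, \BQ) \oplus H^2(M'_{S,v}, \BQ)$ (the cross term vanishes since $H^1 = 0$ for $K3^{[n]}$-type), together with the fact that a class in $H^2$ of a hyper-K\"ahler variety is algebraic if and only if it is of type $(1,1)$, to reduce the statement to showing that each Picard class of $S^{[n]}$ and of $M'_{S,v}$ extends to a (flat) section of the local system $R^2\Pi_{S,*}\BQ$, respectively $R^2\Pi_{M,*}\BQ$, over $\CX^\circ_g$.

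Next I would invoke the standard description of $H^2$ of moduli spaces of sheaves (and of Hilbert schemes) via the Mukai pairing: for a $K3^{[n]}$-type moduli space $M_{S,v}$ there is a canonical isometry $H^2(M_{S,v}, \BZ) \simeq v^\perp \subset \widetilde H(S, \BZ)$, and likewise $H^2(S^{[n]}, \BZ) \simeq H^2(S, \BZ) \oplus \BZ\delta$ where $\delta$ is half the class of the exceptional divisor. Under these identifications, the sub-Hodge-structure $\mathrm{NS}$ corresponds to the algebraic part of the (extended) Mukai lattice. Since $\Pic(S) = \BZ H$ with $H$ extending to the relative polarization $\CH$ over all of $\CX^\circ_g$, the algebraic part of $\widetilde H(S,\BZ)$ is spanned by $(1,0,0)$, $(0,H,0)$, $(0,0,1)$ — all of which are monodromy invariant because they come from the rank, the relative polarization, and the point class of the universal $K3$, each defined globally over $\CX^\circ_g$. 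The Mukai vector $v = (r, mH, s)$ is itself such a globally defined class, so $v^\perp \cap (\text{algebraic part})$ and the extra class $\delta$ (which is monodromy invariant as a characteristic class of the universal family) together span $\mathrm{NS}(S^{[n]})_\BQ \oplus \mathrm{NS}(M'_{S,v})_\BQ$ by flat sections. Passing from $M_{S,v}$ to the birational model $M'_{S,v}$ changes nothing here, since birational hyper-K\"ahler varieties in the same family have canonically identified $H^2$ carrying the same monodromy action, and Proposition \ref{prop2.1} arranges $\CM'_{g,v} \to \CX^\circ_g$ to be a smooth family with $M'_{S,v}$ as a fiber.

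Finally I would combine these: a class in $\Pic(S^{[n]} \times M'_{S,v})_\BQ$ is a $\BQ$-combination of the classes just produced, each of which is the restriction to the fiber over $P$ of a global class on the total space of $\Pi$ (pulled back from the two factors), hence invariant under the geometric monodromy of $\Pi$. The main obstacle is being careful about what is meant by ``defined over the base'': one needs that the tautological/characteristic classes on $\CS_g^{[n]}$ and $\CM'_{g,v}$ — the relative polarization, the class $\delta$, and the Mukai-lattice classes cut out by $v$ — genuinely extend as algebraic (or at least flat) classes over the whole Deligne--Mumford stack $\CX^\circ_g$, and that the rank-$1$ hypothesis $\Pic(S)=\BZ H$ is exactly what forces $\mathrm{NS}$ of the fiber to be generated by these global classes rather than by classes that jump. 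Everything else is the bookkeeping of Mukai isometries and Künneth, which I would not grind through in detail.
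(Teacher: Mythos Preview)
Your proposal is correct and follows essentially the same approach as the paper: both identify $\mathrm{NS}(M'_{S,v})_\BQ$ with the algebraic part of $v^\perp \subset \widetilde{H}(S,\BQ)$ via the Mukai isometry, observe that these classes are realized by the universal family and hence defined relatively over $\CX^\circ_g$, and treat $S^{[n]}$ analogously. The paper's proof is terser (four sentences), whereas you spell out the K\"unneth reduction, the explicit generators $(1,0,0),(0,H,0),(0,0,1)$, the passage from $M_{S,v}$ to the birational model $M'_{S,v}$, and the role of the rank-$1$ hypothesis in ensuring $\mathrm{NS}$ of the fiber is spanned by the globally defined classes; these elaborations are welcome but do not constitute a different strategy.
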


\begin{proof}
There are two algebraic classes in $H^2(M_{S,v}, \BQ) = H^2(M'_{S,v}, \BQ)$ which are given by the two algebraic classes in the orthogonal complement 
\[
v^\perp \subset \widetilde{H}(S, \BQ)
\]
via a correspondence given by the universal family. Therefore, both classes can be defined relatively over $\CX^\circ_g$; this proves the monodromy invariance of the classes in $\mathrm{Pic}(M'_{S,v})_\BQ$. The monodromy invariance for $S^{[n]}$ is parallel and simpler.
\end{proof}

The Fourier--Mukai kernel of (\ref{Zhang_FM2}) induces the (cohomological) kappa class
\begin{equation}\label{kappa}
\kappa(\CE) \in H^*(S^{[n]} \times M'_{S,v}, \BQ)
\end{equation}
as in Section \ref{sec1.2} which, by the K\"unneth formula, is completely determined by its induced (cohomological) Fourier transform
\begin{equation}\label{Fourier2}
    \mathfrak{F}:H^*(S^{[n]}, \BQ) \to H^*(M'_{S,v}, \BQ).
\end{equation}

\begin{prop}\label{prop2.3}
    The class \eqref{kappa} is monodromy invariant. In particular, \eqref{kappa} defines a class 
    \begin{equation}\label{coh}
\kappa(\CE) \in H^0\left(\CX^\circ_g, R\pi_* \BQ_{\CS_g^{[n]}\times_{\CX^\circ} \CM'_{g,v}}\right)
\end{equation}
via parallel transport.
\end{prop}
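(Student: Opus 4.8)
The plan is to convert monodromy invariance of $\kappa(\CE)$ into an equivariance statement for the cohomological Fourier transform $\mathfrak{F}$ of \eqref{Fourier2}, and then to verify it using the representation-theoretic description of $\mathfrak{F}$ from Section~\ref{sec1.4}. Write $\Gamma$ for the image of the geometric monodromy of \eqref{family2} acting on $H^*(S^{[n]}\times M'_{S,v},\BQ)$. Since \eqref{family2} is a fiber product over $\CX^\circ_g$, the K\"unneth decomposition $H^*(S^{[n]}\times M'_{S,v},\BQ)=H^*(S^{[n]},\BQ)\otimes H^*(M'_{S,v},\BQ)$ is $\Gamma$-stable, with $\Gamma$ acting through representations $\rho_S$ on $H^*(S^{[n]},\BQ)$ and $\rho_M$ on $H^*(M'_{S,v},\BQ)$, both induced from the monodromy of $\Gamma$ on $H^*(S,\BQ)$ (via the standard description of the cohomology of Hilbert schemes of points, respectively via a relative universal family as in the proof of Lemma~\ref{lem2.2}). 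Under K\"unneth and Poincar\'e duality the class $\kappa(\CE)$ corresponds to $\mathfrak{F}$, so $\kappa(\CE)$ is monodromy invariant if and only if $\rho_M(\gamma)\circ\mathfrak{F}=\mathfrak{F}\circ\rho_S(\gamma)$ for every $\gamma\in\Gamma$.

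The first step is to lift the $\Gamma$-action to the extended Mukai lattices. Since these lattices, together with their LLV actions, are functorial in the $K3$ surface $S$ (see \cite[Section~10.1]{Markman}), the representations $\rho_S,\rho_M$ are the integrations, through the map recalled in Section~\ref{sec1.4}, of the $\Gamma$-actions on $\widetilde H(S^{[n]},\BQ)$ and $\widetilde H(M'_{S,v},\BQ)$. The resulting isometries $\widetilde\rho_S(\gamma)\in O(\widetilde H(S^{[n]},\BQ))$ and $\widetilde\rho_M(\gamma)\in O(\widetilde H(M'_{S,v},\BQ))$ fix $\alpha$ and $\beta$, fix the algebraic part of each lattice pointwise --- here one uses, exactly as in the proof of Lemma~\ref{lem2.2}, that the relative polarization and the algebraic classes in $v^\perp$ are monodromy invariant --- and act on the transcendental part, which is canonically $T(S)_\BQ$ in both cases, through the monodromy of the genus-$g$ family of $K3$ surfaces.

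Next I would invoke Lemma~\ref{lem1.10}: after the parallel transports \eqref{PTX}--\eqref{PTY}, $\mathfrak{F}=(\pm\varphi)^H$, where $\varphi=\widetilde H(\mathfrak{F})\colon\widetilde H(S^{[n]},\BC)\to\widetilde H(M'_{S,v},\BC)$ is the isometry \eqref{eq:htilde} and $(-)^H$ is the integration map. Combining with the previous step, $\rho_M(\gamma)\circ\mathfrak{F}\circ\rho_S(\gamma)^{-1}=(\pm\,\widetilde\rho_M(\gamma)\circ\varphi\circ\widetilde\rho_S(\gamma)^{-1})^H$, so it suffices to show that $\varphi$ is $\Gamma$-equivariant, up to the sign $\pm$ (present only when $n$ is even). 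The key point is that $\varphi=\widetilde H(\mathfrak{F})$ is defined uniformly over $\CX^\circ_g$: Zhang's equivalence \eqref{Zhang_FM2} is built from Markman's projectively hyperholomorphic bundles, which deform in families, so the kernel $\CE$, and with it $\varphi$, spreads out to a morphism of local systems over $\CX^\circ_g$ --- which is precisely $\Gamma$-equivariance. (Concretely: $\varphi$ is a Hodge isometry by Taelman \cite[Theorems~4.10 and~4.11]{Taelman}, so it preserves the algebraic/transcendental splittings; $\Gamma$ is trivial on the algebraic parts by the previous step, while on the transcendental part $T(S)_\BQ$ --- which for the general member of $\CX^\circ_g$ carries only the Hodge auto-isometries $\pm\Id$ --- the restriction $\varphi|_{T(S)_\BQ}$ equals $\pm\Id$, which commutes with the $K3$-monodromy.) The residual sign for $n$ even is governed by the orientation data of $\widetilde H(S^{[n]})$ and $\widetilde H(M'_{S,v})$, which $\Gamma$ preserves since $\CX^\circ_g$ is connected; see \cite[Convention~5.16]{Markman}.

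Putting these steps together gives $\Gamma$-equivariance of $\mathfrak{F}$, hence monodromy invariance of $\kappa(\CE)$. The ``in particular'' is then immediate: $\Pi$ is proper and smooth and $\CX^\circ_g$ is connected, so $H^0(\CX^\circ_g,R\pi_*\BQ_{\CS_g^{[n]}\times_{\CX^\circ}\CM'_{g,v}})$ is exactly the $\Gamma$-invariant subspace of $H^*(S^{[n]}\times M'_{S,v},\BQ)$, and parallel transporting the invariant class $\kappa(\CE)$ yields the global section \eqref{coh}. I expect the main obstacle to lie in the two middle steps: verifying that the geometric monodromy on $H^*(S^{[n]},\BQ)$ and on $H^*(M'_{S,v},\BQ)$ is the LLV integration of the monodromy on the extended Mukai lattices, and that Zhang's equivalence --- hence $\varphi=\widetilde H(\mathfrak{F})$ and its restriction to $T(S)_\BQ$ --- is defined uniformly enough over $\CX^\circ_g$ to be a morphism of local systems, together with the bookkeeping of the sign when $n$ is even.
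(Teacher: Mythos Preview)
Your parenthetical ``concrete'' argument is essentially the paper's proof: reduce via Lemma~\ref{lem1.10} to monodromy invariance of $\widetilde H(\mathfrak F)$ on the extended Mukai lattices, split into algebraic and transcendental parts, handle the algebraic part with Lemma~\ref{lem2.2}, and observe that on the transcendental part the map is forced (the paper phrases this as ``determined by the class of the symplectic form, which is clearly monodromy invariant'', which amounts to your $\varphi|_{T(S)_\BQ}=\pm\Id$). The integration compatibility you flag as a potential obstacle is absorbed into the invocation of Lemma~\ref{lem1.10} and not discussed further.

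One caution: your \emph{primary} justification --- that $\CE$, and hence $\varphi=\widetilde H(\mathfrak F)$, ``spreads out to a morphism of local systems over $\CX^\circ_g$'' because Markman's bundles deform in families --- should be dropped. Markman's and Zhang's deformations run along twistor paths, which are transcendental and only furnish bundles over an analytic neighborhood of $P$ (compare the proof of Proposition~\ref{prop2.7}); the algebraic spreading over $\CX^\circ_g$ is precisely what Theorem~\ref{thm_main2} constructs, via a moduli-theoretic argument that \emph{uses} Proposition~\ref{prop2.3} as input (the class~\eqref{coh} enters the definition of the moduli stack $\mathfrak T^\natural_g$, and the Remark after Theorem~\ref{thm_main2} invokes Proposition~\ref{prop2.3} explicitly). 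So that sentence is circular at this point in the paper; let the Hodge-theoretic argument stand on its own, as the paper does.
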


\begin{proof}
    By Lemma \ref{lem1.10}, the morphism (\ref{Fourier2}) is determined by its induced morphism on the extended Mukai lattices
    \begin{equation}\label{extended}
    \widetilde{H}(\mathfrak{F}):  \widetilde{H}(S^{[n]}, \BQ) \to  \widetilde{H}(M'_{S,v}, \BQ).
    \end{equation}
    Hence, it suffices to prove that (\ref{extended}) is monodromy invariant for (\ref{family2}). 

    The morphism (\ref{extended}) preserves the algebraic and the transcendental parts respectively. The morphism on the transcendental part is determined by the class of the symplectic form, which is clearly monodromy invariant. The algebraic parts of $\widetilde{H}(S^{[n]}, \BQ)$ and $\widetilde{H}(M'_{S,v}, \BQ)$ are spanned by $\alpha, \beta$ and the classes in $\mathrm{Pic}(S^{[n]})_\BQ$ and $\mathrm{Pic}(M'_{S,v})_\BQ$ respectively. By Lemma \ref{lem2.2}, all these classes are monodromy invariant. Therefore the morphism on the algebraic part is also monodromy invariant. This completes the proof.
\end{proof}

The main result of Section \ref{sec2} is the following theorem, whose proof will be completed in Section \ref{sec_proof}.

\begin{thm}\label{thm_main2}
    There is a class
    \[
    \mathfrak{K}^{\mathrm{univ}} \in \mathrm{CH}^*\left(\CS_g^{[n]}\times_{\CX_g^\circ} \CM'_{g,v}\right)
    \]
    whose restriction to every fiber over $\CX^\circ_g$ recovers the cohomology class \eqref{coh}.
\end{thm}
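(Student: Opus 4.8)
The plan is to spread out the Fourier--Mukai kernel $\CE$ of Zhang's equivalence \eqref{Zhang_FM2} to a relative twisted sheaf over the family \eqref{family2} and to take $\mathfrak{K}^{\mathrm{univ}}$ to be its kappa class. The starting point is that, by Proposition~\ref{prop2.1} and its proof, both $\CS^{[n]}_g \to \CX^\circ_g$ and $\CM'_{g,v} \to \CX^\circ_g$ are relative moduli spaces of (Bridgeland-)stable objects on the universal $K3$ surface $\CS_g \to \CX^\circ_g$, for suitable relative Mukai vectors and relative families of stability conditions as in \cite{BM, Six}. Consequently each of them carries a relative twisted universal object --- say $\CU^{[n]}_{\mathrm{rel}}$ on $\CS^{[n]}_g \times_{\CX^\circ_g} \CS_g$ and $\CU^{v}_{\mathrm{rel}}$ on $\CM'_{g,v} \times_{\CX^\circ_g} \CS_g$, twisted by Brauer classes pulled back from the corresponding moduli space --- restricting over the point $P$ to the universal objects used by Zhang to build \eqref{Zhang_FM2}.

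The key step, and the main technical input, is an \emph{algebraic moduli description} of the kernel $\CE$: following \cite{Zhang}, one identifies $\CE$ with the output of a fixed integral transform built from the universal objects on $S^{[n]} \times S$ and $M'_{S,v} \times S$ (concretely, a composition of relative $\ext$-sheaves, i.e.\ derived pushforwards along $S$ of shifted sheaf-$\Hom$ complexes between pullbacks of the universal objects), a construction which is purely moduli-theoretic and hence makes sense verbatim relatively over $\CX^\circ_g$. Applying it to $\CU^{[n]}_{\mathrm{rel}}$ and $\CU^{v}_{\mathrm{rel}}$ produces a relative twisted object
\[
\CE^{\mathrm{univ}} \in D^b\!\big(\CS^{[n]}_g \times_{\CX^\circ_g} \CM'_{g,v},\ \gamma_{\mathrm{rel}}\big), \qquad \CE^{\mathrm{univ}}\big|_{P} \simeq \CE,
\]
for a relative Brauer class $\gamma_{\mathrm{rel}}$ (the relative avatar of Zhang's $\theta_v$). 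By Markman and Zhang, $\CE$ is a twisted vector bundle of rank $N := n!\,r^{n}$; since the underlying projectively hyperholomorphic bundle survives the deformations parametrized by $\CX^\circ_g$, every fiber of $\CE^{\mathrm{univ}}$ is again a twisted vector bundle of rank $N$, and flatness over $\CX^\circ_g$ upgrades $\CE^{\mathrm{univ}}$ to a $\gamma_{\mathrm{rel}}$-twisted vector bundle on the total space.

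With $\CE^{\mathrm{univ}}$ at hand the relative kappa class is formed exactly as in Section~\ref{sec1.2}. The twist cancels in
\[
\CG := (\CE^{\mathrm{univ}})^{\otimes N} \otimes \det(\CE^{\mathrm{univ}})^{-1},
\]
which is an honest untwisted vector bundle on $\CS^{[n]}_g \times_{\CX^\circ_g} \CM'_{g,v}$; its Chern character lies in the rational Chow ring, has invertible constant term $N^{N}$, and therefore admits a well-defined $N$-th root, and we set
\[
\mathfrak{K}^{\mathrm{univ}} := \kappa(\CE^{\mathrm{univ}}) = \sqrt[N]{\mathrm{ch}(\CG)} \in \mathrm{CH}^*\!\big(\CS^{[n]}_g \times_{\CX^\circ_g} \CM'_{g,v}\big).
\]
Restricting over $P$, and using $\CE^{\mathrm{univ}}|_P \simeq \CE$ together with the compatibility of Chern characters, determinants, and $N$-th roots with base change, gives $\mathfrak{K}^{\mathrm{univ}}|_{P} = \kappa(\CE)$, which by Proposition~\ref{prop2.3} is precisely the fiber at $P$ of the cohomology class \eqref{coh}. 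On the other hand, the cohomology class of the algebraic cycle $\mathfrak{K}^{\mathrm{univ}}$ restricts, fiber by fiber, to a flat (hence monodromy-invariant) section of $R^*\Pi_*\BQ$ over the connected base $\CX^\circ_g$ --- cycle classes being locally constant in smooth proper families --- so it is determined by its value at the single point $P$, and therefore recovers \eqref{coh} over every fiber over $\CX^\circ_g$. This proves the theorem.

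The step I expect to be the main obstacle is the second one: extracting from Zhang's construction \cite{Zhang} a genuinely algebraic, moduli-theoretic formula for the kernel $\CE$ --- one that does not pass through the transcendental, twistor-type deformations used to build Markman's hyperholomorphic bundles --- so that $\CE$ manifestly spreads out to $\CE^{\mathrm{univ}}$. Subsidiary technical points, which I expect to be routine but delicate, are the bookkeeping of the relative Brauer twists so that $\CG$ is honestly untwisted on the total space, and the verification that $\CE^{\mathrm{univ}}$ stays fiberwise a (twisted) vector bundle over all of $\CX^\circ_g$ --- which is exactly where the hyperholomorphicity, and the $v$-genericity built into $\CX^\circ_g$, are used.
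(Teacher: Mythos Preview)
Your plan hinges on an ``algebraic moduli description'' of Zhang's kernel $\CE$ as a fixed integral transform (relative $\ext$-sheaves, etc.) built from the universal objects on $S^{[n]}\times S$ and $M'_{S,v}\times S$. This is not how $\CE$ arises, and the paper is explicit that no such description is available: Zhang's bundle is obtained by deforming Markman's $\CU^{[n]}$ on $M_{S_0,v_0}^{[n]}\times S_0^{[n]}$ --- for an \emph{auxiliary} $K3$ surface $S_0$ with isotropic Mukai vector $v_0$ --- along transcendental twistor paths to land on $S^{[n]}\times M'_{S,v}$. It is not manufactured from the universal sheaves on the moduli spaces over $S$, so there is nothing to ``apply verbatim relatively.'' You flag this as a potential obstacle, but it is in fact the entire content of the theorem; your proposal assumes it away.

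The paper's route is genuinely different and worth understanding. Rather than spreading out the specific bundle $\CE$, it introduces the relative moduli stack $\mathfrak{T}^\natural_g$ of slope-stable vector bundles on the fibers of the gerbe $S^{[n]}\times\mathfrak{M}'_{S,v}$ with prescribed kappa class (this requires first globalizing the gerbe via Lemma~\ref{lem2.4}, passing to a generically finite cover $\CX^\natural_g$; your treatment of the relative Brauer class $\gamma_{\mathrm{rel}}$ is too casual here). The key point is Proposition~\ref{prop2.7}: one shows $\mathfrak{T}^\natural_g$ \emph{dominates} $\CX^\natural_g$, and the proof is analytic --- for a nearby point one reruns Zhang's twistor-path construction to produce a point of the moduli stack. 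One then takes an irreducible dominating component, pulls back the universal bundle, and \emph{averages} its kappa class down along the generically finite map. In particular, as the paper notes in its final remark, the resulting $\mathfrak{K}^{\mathrm{univ}}$ need not restrict to $\kappa(\CE)$ as a Chow class over $P$; only the cohomology class matches, via the monodromy invariance of Proposition~\ref{prop2.3}. That concession is precisely the signal that a direct algebraic spreading-out of $\CE$ is not on offer.
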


The Fourier--Mukai transform (\ref{Zhang_FM2}) is constructed for any $K3$ surface using transcendental techniques (\emph{e.g.}~twistor lines, hyper-holomorphic bundles, \emph{etc}.). In order to apply the Franchetta properties over the moduli of polarized $K3$ surfaces, we want to globalize it over a Zariski open subset of $\CX_g$. This is achieved by a moduli approach which we discuss in the~following.

\subsection{Stable vector bundles on gerbes}
The Brauer class $\theta_v$ in Zhang's Fourier--Mukai transform (\ref{Zhang_FM2}) is given by a class
\begin{equation}\label{theta}
\theta_v \in H^2(M'_{S,v}, \BZ/(2n-2)\BZ).
\end{equation}
Geometrically, this corresponds to a $\mathbf{\mu}_{2n-2}$-gerbe
\[
\mathfrak{M}'_{S,v} \to M'_{S,v}.
\]

The class (\ref{theta}) is canonical up to a sign (see \cite[Section 2.2]{Zhang}). Therefore it can be defined as a relative class for the family $\Pi_M: \CM'_{g,v} \to \CX^\circ_g$; in other words, it gives rise to a section 
\begin{equation}\label{theta2}
[\theta_v] \in H^0( \CX^\circ_g, R^2{\Pi_M}_*(\BZ/(2n-2)\BZ)),
\end{equation}
which yields a $\mu_{2n-2}$-gerbe for each fiber of $\Pi_M: \CM'_{g,v} \to \CX^\circ_g$.

The next lemma shows that we can globalize these gerbes by passing to a generically finite cover.

\begin{lem}\label{lem2.4}
        There exists an irreducible nonsingular quasi-projective variety ${\CX}^\natural_g$ together with a generically finite morphism
        \[
        \nu: {\CX}^\natural_g \to \CX^\circ_g
        \]
        such that the pullback
        \[
        [\theta_v] \in H^0( \CX^\natural_g, R^2{\Pi_M}_*\left(\BZ/(2n-2)\BZ\right)).        \]
        of \eqref{theta2} along $\nu$ can be lifted to a class
        \[
        \Theta_v \in H^2(\CM'^{\natural}_{g,v}, \BZ/(2n-2)\BZ).
        \]
        Here we use $\Pi_M: \CM'^{\natural}_{g,v} \to {\CX}^\natural_g$ to denote the pullback of the family $\Pi_M: \CM'_{g,v} \to \CX^\circ_g$ via $\nu$. 
    \end{lem}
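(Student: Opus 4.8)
The class $[\theta_v]$ of \eqref{theta2} is a section of the local system $R^2{\Pi_M}_*(\BZ/(2n-2)\BZ)$ over $\CX^\circ_g$, but in order to realize it as an honest cohomology class $\Theta_v \in H^2(\CM'_{g,v},\BZ/(2n-2)\BZ)$ we must lift it through the edge map of the Leray spectral sequence for $\Pi_M$. The obstruction to doing so lives in $H^2\bigl(\CX^\circ_g, {\Pi_M}_*(\BZ/(2n-2)\BZ)\bigr) \oplus H^3\bigl(\CX^\circ_g, R^0\bigr)$-type terms, and need not vanish on $\CX^\circ_g$ itself. The strategy is to kill these obstructions after pulling back along a suitable generically finite cover $\nu\colon \CX^\natural_g \to \CX^\circ_g$.

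\textbf{Key steps.} First I would choose $\CX^\natural_g$ so that $R^0{\Pi_M}_*(\BZ/(2n-2)\BZ) = \BZ/(2n-2)\BZ$ becomes a constant sheaf after pullback (already true, as the fibers are connected) and so that the monodromy of $R^1{\Pi_M}_*$ and $R^2{\Pi_M}_*$ is trivialized on the relevant classes; concretely, one replaces $\CX^\circ_g$ by a connected component of the moduli space of fiberwise pairs (point of $\CX^\circ_g$ together with a marking of $H^2$ of the fiber compatible with $[\theta_v]$), which maps to $\CX^\circ_g$ by a finite étale map over a dense open, and one takes $\CX^\natural_g$ to be a nonsingular quasi-projective model of an irreducible component. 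After this base change, further shrink $\CX^\natural_g$ to a Zariski open (to preserve irreducibility and smoothness, which is harmless since all Franchetta-type statements are about generic fibers) so that the Leray spectral sequence for $\Pi_M\colon \CM'^\natural_{g,v}\to\CX^\natural_g$ with $\BZ/(2n-2)\BZ$-coefficients degenerates enough for the edge map $H^2(\CM'^\natural_{g,v},\BZ/(2n-2)\BZ)\to H^0(\CX^\natural_g, R^2{\Pi_M}_*)$ to be surjective onto the monodromy-invariant section $[\theta_v]$. The mechanism here is standard: over an affine $\CX^\natural_g$ the higher cohomology of the base with coefficients in a locally constant sheaf can be made to vanish in the range that produces obstructions, or alternatively one invokes that a monodromy-invariant torsion class on the generic fiber, after a finite cover and Zariski localization, extends (this is the torsion analogue of the fact that monodromy-invariant rational Hodge classes spread out). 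Finally, set $\Theta_v$ to be any such lift; it restricts fiberwise to the pullback of $[\theta_v]$ by construction, hence defines the desired $\mu_{2n-2}$-gerbe globally over $\CM'^\natural_{g,v}$.

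\textbf{Main obstacle.} The delicate point is ensuring the edge map hits $[\theta_v]$ exactly, i.e.\ controlling the differentials $d_2\colon H^0(R^2)\to H^2(R^1)$ and $d_3\colon \ker d_2 \to H^3(R^0)$ in the Leray spectral sequence with finite coefficients. On $\CX^\circ_g$ these differentials can be nonzero; the role of the generically finite cover $\nu$ is precisely to trivialize the monodromy so that $R^1{\Pi_M}_*$ and $R^2{\Pi_M}_*$ become (up to further shrinking) constant sheaves on an affine base, forcing $H^{\geq 1}$ of the base with these coefficients to die and hence the differentials to vanish. Making this rigorous requires either a careful choice of $\CX^\natural_g$ as a level cover — using that $\pi_1(\CX^\circ_g)$ acts on $H^2$ of the fiber through an arithmetic group, whose action on $H^2(\cdot,\BZ/(2n-2)\BZ)$ factors through a finite quotient — or a direct appeal to the finiteness of the relevant monodromy representation; I expect this bookkeeping with finite coefficients and the passage to an irreducible nonsingular model to be the only real content of the argument.
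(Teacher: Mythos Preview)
Your overall structure --- analyze the Leray spectral sequence for $\Pi_M$ and kill the differentials out of $E_2^{0,2}$ by base change --- is exactly what the paper does, and your first step (passing to a level cover to obtain a variety) matches the paper's use of level structures. Two points diverge. First, the fibers of $\Pi_M$ are hyper-K\"ahler of $K3^{[n]}$-type, hence simply connected, so $R^1{\Pi_M}_*(\BZ/(2n-2)\BZ) = 0$ and the differential $d_2\colon H^0(R^2) \to H^2(R^1)$ you worry about vanishes for free; the paper uses this implicitly and names only $d_3([\theta_v]) \in H^3(\CX^\natural_g, \BZ/(2n-2)\BZ)$ as the obstruction. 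Second, and more seriously, your mechanism for killing $d_3$ does not work: trivializing the monodromy on $R^2$ has no effect on the target $H^3(\CX^\natural_g, R^0)$ (here $R^0$ is already the constant sheaf $\BZ/(2n-2)\BZ$), and shrinking to an affine base of dimension $19$ does not force $H^3$ with constant finite coefficients to vanish --- Artin/Andreotti--Frankel only give $H^{>d}=0$, not $H^{\leq d}=0$. So the assertion that ``$H^{\ge 1}$ of the base with these coefficients dies'' is false, and with it your argument for the vanishing of~$d_3$.

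The paper instead attacks the single class $d_3([\theta_v])$ directly: having already passed to a nonsingular variety via level structures, it pulls back along one further finite morphism chosen to annihilate this particular class in $H^3(\CX^\natural_g, \BZ/(2n-2)\BZ)$. Your alternative suggestion --- that a monodromy-invariant torsion class spreads out after a finite cover and Zariski localization --- is in the right spirit, but the actual content lies precisely in explaining why a given class in $H^3$ of the base can be killed by such a cover; neither trivializing the monodromy on the higher direct images nor restricting to an affine open accomplishes this on its own.
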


\begin{proof}
    Using level structures, we can dominate the irreducible nonsingular Deligne--Mumford stack $\CX^\circ_{g} \subset \CX_g$ by an irreducible quasi-projective nonsingular variety $\CX^\natural_g$ through a generically finite morphism. However, it is not clear whether the section $[\theta_v]$ can be lifted to a global class $\Theta_v$ as desired; this is because the Leray spectral sequence with $\BZ/(2n-2)\BZ$-coefficients 
    \[
H^i( \CX^\natural_g, R^j{\Pi_M}_*(\BZ/(2n-2)\BZ))  \Rightarrow H^{i+j}  (\CM'^{\natural}_{g,v}, \BZ/(2n-2)\BZ)
\]
    may not degenerate on the second page. 

   The obstruction to lifting $[\theta_v]$ is given by the image of the boundary map
    \[
    d_3([\theta_v]) \in H^3(\CX^\natural_g, \BZ/(2n-2)\BZ).
    \]
    This class may be annihilated by further pulling back along a finite morphism. Replacing $\CX^\natural_g$ by the domain of this finite morphism completes the proof.
\end{proof}

By Lemma \ref{lem2.4}, we obtain a $\mu_{2n-2}$-gerbe
\begin{equation}\label{global_gerbe}
\mathfrak{M}'^\natural_{g,v} \to \CM'^{\natural}_{g,v} \to \CX_g^\natural
\end{equation}
whose restriction to every fiber recovers the $\mu_{2n-2}$-gerbe induced by $[\theta_v]$. We also consider the family obtained from the pullback of the Hilbert schemes,
\[
\CS^{[n]}_g \to \CX^\natural_g,
\]
and the relative product
\begin{equation}\label{family3}
\CS^{[n]}_g \times_{\CX^\natural_g} \mathfrak{M}'^\natural_{g,v} \to {\CX^\natural_g}.
\end{equation}

Assume that $Q \in \CX_g^\natural$ is a point over $P \in \CX^\circ_g$, so that the fiber over $Q$ of the family (\ref{family3}) recovers~$S^{[n]} \times \mathfrak{M}'_{S,v}$. The families (\ref{global_gerbe}) and (\ref{family3}) play a key role in the proof of Theorem \ref{thm_main2}.

\subsection{Proof of Theorem \ref{thm_main2}}\label{sec_proof}

The Fourier--Mukai kernel $(\CE, \theta_v)$ is a twisted vector bundle on~$S^{[n]} \times M'_{S,v}$. Since the twisted vector bundle $(\CE, \theta_v)$ is deformed along a twistor path from a slope stable vector bundle on a pair of Hilbert schemes, it is slope stable with respect to an ample class
\[
A\in \mathrm{Pic}(S^{[n]} \times M'_{S,v})_\BQ.
\]
Equivalently, the twisted vector bundle $(\CE, \theta_v)$ on $S^{[n]} \times M'_{S,v}$ corresponds to an $A$-slope stable vector bundle on the $\mu_{2n-2}$-gerbe:
\begin{equation}\label{vector_bundle}
\CE \in  \mathrm{Coh}(S^{[n]} \times \mathfrak{M}'_{S,v}).
\end{equation}
By Lemma \ref{lem2.2}, the polarization $A$ on $S^{[n]} \times M'_{S,v}$ can be lifted to a relative polarization
\[
\CA \in \mathrm{Pic}\left(\CS^{[n]}_g \times_{\CX^\natural_g} \CM'^\natural_{g,v}\right)_\BQ.
\]
Let $\mathfrak{T}^\natural_g \to \CX^\natural_g$ be the relative moduli stack of vector bundles on the fibers of (\ref{family3}) satisfying 
\begin{enumerate}
    \item[(a)]  they are slope stable with respect to the polarization $\CA$, and
    \item[(b)] the kappa classes of the vector bundles are given by (\ref{coh}).
\end{enumerate}
We rigidify the moduli stack with respect to the scaling automorphism $\mathbb{C}^*$, so that it is Deligne--Mumford. For convenience, from now on we use $\mathfrak{T}^\natural_g$ to denote this Deligne--Mumford stack. Clearly the vector bundle (\ref{vector_bundle}) lies in the moduli stack $\mathfrak{T}^\natural_g$ over the point $Q \in \CX^\natural_g$:
\[
[\CE] \in \mathfrak{T}^\natural_g \mapsto Q \in \CX^\natural_g.
\]
In particular, the moduli stack $\mathfrak{T}^\natural_g$ is non-empty.

\begin{lem}\label{lem2.6}
    The moduli stack $\mathfrak{T}^\natural_g$ is a $\mu_{2n-2}$-gerbe over an algebraic space $\CT^\natural_g$ over $\CX^\natural_g$, 
    \[
    \mathfrak{T}^\natural_g \to  \CT^\natural_g \to  \CX^\natural_g,
    \]
    where $\CT^\natural_g$ is a countable union of finite type algebraic spaces over $\CX_g^\natural$.
\end{lem}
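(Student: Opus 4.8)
The plan has three steps: show that $\mathfrak{T}^\natural_g$ is an algebraic stack, locally of finite type over $\CX^\natural_g$; cut it into finite-type pieces by a boundedness argument; and read off the $\mu_{2n-2}$-gerbe structure from its inertia. For the first step, note that a vector bundle on a fibre of the family (\ref{family3}) is the same datum as a twisted sheaf for the Brauer class underlying $\Theta_v$; since we work in characteristic $0$ the $\mu_{2n-2}$-gerbe $\mathfrak{M}'^\natural_{g,v}$ is tame, and the standard moduli theory for twisted sheaves --- equivalently, for coherent sheaves on tame Deligne--Mumford stacks, developed by Lieblich, by Nironi, and by Abramovich--Graber--Vistoli --- produces an algebraic stack, locally of finite presentation over $\CX^\natural_g$, of coherent sheaves flat over the base on the smooth projective family (\ref{family3}). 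Imposing slope stability with respect to $\CA$ (an open condition) and the condition that the kappa class equal (\ref{coh}) (a locally constant, hence open and closed, condition on twisted vector bundles) cuts out $\mathfrak{T}^\natural_g$, which is therefore algebraic and locally of finite type over $\CX^\natural_g$, and whose $\BC^*$-rigidification --- the scaling $\BC^*$ being central in every automorphism group --- is again algebraic.

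For the countable-union statement, I would argue as follows. Because $\CX^\natural_g$ is connected and the families $\CS^{[n]}_g \to \CX^\natural_g$ and $\CM'^\natural_{g,v} \to \CX^\natural_g$ are smooth and proper, the cohomology of the fibres of (\ref{family3}) is a local system, so the prescribed class (\ref{coh}) determines the total Chern character of every parametrized bundle up to the value of $c_1$, which ranges over the image of $\Pic$ in $H^2$ of a fibre --- a fixed countable lattice (replacing a bundle by its twist by a line bundle on the coarse space moves $c_1$ but preserves the kappa class, using that $H^1$ of the fibres vanishes). For each fixed discrete type, Langer's boundedness theorem for semistable sheaves in a projective family, in the form valid for sheaves on tame stacks, shows that the corresponding substack is of finite type over $\CX^\natural_g$. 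Summing over the countably many types exhibits $\CT^\natural_g$, once constructed, as a countable union of finite-type algebraic spaces over $\CX^\natural_g$.

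The third step is the delicate one. A bundle slope stable with respect to $\CA$ is simple, and a computation of the inertia of $\mathfrak{T}^\natural_g$ shows that, after the $\BC^*$-rigidification, the automorphism group of every object is the locally constant \emph{central} group $\mu_{2n-2}$ coming from the band of the gerbe $\mathfrak{M}'^\natural_{g,v}$ (equivalently, from the order of the twist $\theta_v$, which forces the endomorphism Azumaya algebra of a parametrized bundle to have degree divisible by $2n-2$). Granting this, the rigidification formalism of Abramovich--Olsson--Vistoli exhibits $\mathfrak{T}^\natural_g$ as a $\mu_{2n-2}$-gerbe over its further rigidification $\CT^\natural_g$ along $\mu_{2n-2}$, a Deligne--Mumford stack with trivial inertia; and the valuative criterion for $\CT^\natural_g$ --- which holds because a family of slope-stable twisted sheaves over a punctured trait has, up to isomorphism, a unique stable limit (Langton's theorem in its twisted form, due to Lieblich) --- shows that the diagonal of $\CT^\natural_g$ is a monomorphism, so $\CT^\natural_g$ is an algebraic space. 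I expect the main obstacle to be precisely this inertia computation --- pinning the residual automorphism group down to the constant central $\mu_{2n-2}$ uniformly over $\CX^\natural_g$ --- together with the separatedness needed to upgrade $\CT^\natural_g$ from an algebraic stack to an algebraic space; the remaining points are bookkeeping within the established theory of moduli of twisted sheaves on a projective family.
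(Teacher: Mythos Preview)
Your first two steps are correct and essentially coincide with the paper's argument. The paper's proof is a single citation to Lieblich \cite[Proposition~2.3.3.4]{Lie} for the assertion that the moduli of stable twisted vector bundles with \emph{fixed Chern character} on a $\mu_{2n-2}$-gerbe is a $\mu_{2n-2}$-gerbe over a finite-type algebraic space, followed by the observation (identical to yours) that fixing only the kappa class leaves the Chern character undetermined up to $\mathrm{ch}(\CE) \mapsto \mathrm{ch}(\CE) \cdot e^\ell$ for $\ell$ a rational class in $H^2$ --- whence the countable union.

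Your third step, however, misidentifies the mechanism producing the $\mu_{2n-2}$-gerbe. A slope-stable twisted bundle is simple, so its automorphism group is exactly $\BG_m$; after rigidifying by the scaling $\BC^*$, the residual inertia is \emph{trivial}, not $\mu_{2n-2}$. Your remark about the endomorphism Azumaya algebra having degree divisible by $2n-2$ does not help here: for a simple bundle the endomorphisms are scalar, regardless of the twist, and the band of the gerbe $\mathfrak{M}'^\natural_{g,v}$ acts on a weight-one bundle through the inclusion $\mu_{2n-2}\hookrightarrow\BC^*$, hence is already absorbed by the scaling you quotient out. The $\mu_{2n-2}$-gerbe does not arise as residual inertia after a rigidification. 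Rather, the \emph{un}rigidified moduli is a $\BG_m$-gerbe over the algebraic space $\CT^\natural_g$, and Lieblich's input is that, because the sheaves live on a $\mu_{2n-2}$-gerbe, the class of this $\BG_m$-gerbe in $H^2(\CT^\natural_g,\BG_m)$ is $(2n-2)$-torsion --- equivalently, it admits a reduction of structure group to a $\mu_{2n-2}$-gerbe. It is this reduction that is denoted $\mathfrak{T}^\natural_g$ in the lemma and that carries the universal bundle used downstream. Your Langton/separatedness argument is also superfluous for the lemma as stated: no separatedness is claimed, and once the inertia is trivial the rigidification is already an algebraic space.
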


\begin{proof}
The (rigidified) moduli stack of stable vector bundles on the $\mu_{2n-2}$-gerbe $\mathfrak{T}^\natural_g$ with fixed Chern character is a $\mu_{2n-2}$-gerbe over a finite type algebraic space, \emph{c.f.}~\cite[Proposition~2.3.3.4]{Lie}. Here we only fix the kappa class, which determines all the Chern class of the vector bundle up to a class in the second rational cohomology, \emph{i.e.}, $\kappa(\CE) = \kappa(\CE')$ implies $\mathrm{ch}(\CE)= \mathrm{ch}(\CE)\cup e^{\ell}$ with $\ell$ a class in the second rational cohomology. This ambiguity allows the moduli space to be a countable union of algebraic spaces of finite type.
\end{proof}

\begin{prop}\label{prop2.7}
    The moduli stack $\mathfrak{T}^\natural_g$ dominates $\CX^\natural_g$.
\end{prop}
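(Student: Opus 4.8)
The plan is to show that the fiber of $\mathfrak{T}^\natural_g \to \CX^\natural_g$ over a \emph{very general} point $Q' \in \CX^\natural_g$ is non-empty, which (since $\mathfrak{T}^\natural_g$ is a countable union of finite type algebraic spaces by Lemma \ref{lem2.6}) forces $\mathfrak{T}^\natural_g$ to dominate $\CX^\natural_g$. First I would observe that over a very general point the fiber of (\ref{family3}) is $S'^{[n]} \times \mathfrak{M}'_{S',v}$ for the corresponding $K3$ surface $S'$, and that $S'$ still has Picard rank $1$; in particular all the constructions of Section \ref{sec2.1} — the Brauer class $\theta_v$, the $\mu_{2n-2}$-gerbe, the derived equivalence (\ref{Zhang_FM2}) induced by a projectively hyperholomorphic bundle — apply verbatim to $S'$. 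Thus there is a slope stable twisted vector bundle $(\CE', \theta_v)$ on $S'^{[n]} \times M'_{S',v}$ deformed from $\CU^{[n]}$ along a twistor path, equivalently an $\CA|_{\text{fiber}}$-slope stable vector bundle $\CE'$ on $S'^{[n]} \times \mathfrak{M}'_{S',v}$.

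The key point to check is that this $\CE'$ actually defines a point of $\mathfrak{T}^\natural_g$, i.e. that its kappa class equals the parallel transport of (\ref{coh}) to the fiber over $Q'$. Here I would invoke Proposition \ref{prop2.3}: the kappa class $\kappa(\CE)$ is monodromy invariant for the family (\ref{family2}), so it defines a flat section of $R\pi_*\BQ$ over $\CX^\circ_g$, and this flat section is, by construction, the parallel transport of $\kappa(\CE)$ from $P$. Since $(\CE', \theta_v)$ is deformed from the same hyperholomorphic bundle $\CU^{[n]}$ as $(\CE, \theta_v)$ — both arise by specializing/deforming along twistor paths within the same deformation class of the bundle over the moduli of $K3^{[n]}$-type — their kappa classes lie in the same parallel transport class, hence $\kappa(\CE')$ agrees with the value of (\ref{coh}) at $Q'$. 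Slope stability of $\CE'$ with respect to $\CA$ is likewise inherited since $\CA$ restricts to the relevant polarization $A$ on each fiber by Lemma \ref{lem2.2}, and the hyperholomorphic bundles of Markman are slope stable with respect to such classes along the whole twistor deformation. Therefore $[\CE'] \in \mathfrak{T}^\natural_g$ lies over $Q'$.

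Finally, to promote "non-empty over a very general point" to "dominates", I would argue as follows: $\CT^\natural_g$ is a countable union $\bigcup_i \CT^\natural_{g,i}$ of finite type algebraic spaces over the irreducible variety $\CX^\natural_g$; a countable union of the images of the non-dominant components is contained in a countable union of proper closed subsets, whose complement still contains every very general point. Since we have exhibited a point of $\mathfrak{T}^\natural_g$ (hence of some $\CT^\natural_{g,i}$) over every very general point, at least one $\CT^\natural_{g,i}$ must dominate $\CX^\natural_g$; therefore $\mathfrak{T}^\natural_g$ dominates $\CX^\natural_g$. The main obstacle I anticipate is the middle step — verifying rigorously that the kappa class of the twisted bundle constructed fiberwise via Markman's twistor-path deformation really is the parallel transport of the kappa class at the base point $P$, rather than some other monodromy-translate; this requires knowing that the whole family of hyperholomorphic bundles over the moduli of $K3^{[n]}$-type (or over an appropriate twistor-path cover) specializes consistently, which is exactly the content that makes $\kappa(\CE)$ a well-defined monodromy invariant section in Proposition \ref{prop2.3}.
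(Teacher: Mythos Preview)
Your overall strategy---exhibit a point of $\mathfrak{T}^\natural_g$ over enough base points and then invoke the countable-union structure of Lemma~\ref{lem2.6}---is the same as the paper's. The gap is in how you produce the bundle and match its kappa class. You work at a very general $Q'\in\CX^\natural_g$ and assert that Zhang's construction applies verbatim to the corresponding $K3$ surface $S'$. But Zhang's construction, run intrinsically for $S'$, yields a bundle on $S'^{[n]}\times M''$ where $M''$ is whichever birational model of $M_{S',v}$ satisfies Zhang's K\"ahler-cone requirement; there is no a priori reason for $M''$ to coincide with the fiber of the fixed family $\CM'_{g,v}$ over $Q'$, and you never address this. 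As for the kappa-class matching, which you do flag as an obstacle, the claim that ``both are deformed from $\CU^{[n]}$'' conflates parallel transport along two separate twistor paths (through non-algebraic manifolds, and chosen independently for $S$ and $S'$) with parallel transport in the algebraic family over $\CX^\circ_g$; these need not agree without further input.

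The paper resolves both issues simultaneously by working \emph{locally}. It restricts to a small analytic neighborhood $\mathbf{\Delta}$ of $P$ and considers the parallel-transported isometry $\mathrm{PT}_M^{-1}\circ\varphi\circ\mathrm{PT}_{S^{[n]}}$; for general $t\in\mathbf{\Delta}$ this still sends a K\"ahler class to a K\"ahler class, simply by openness of the K\"ahler cone. Under precisely this hypothesis Zhang's construction produces a bundle $\CE_t$ on the actual fiber $S_t^{[n]}\times M'_{S_t,v}$ of the family \emph{and} connects it to $(\CE,\theta_v)$ by a single twistor path, so $\kappa(\CE_t)$ is on the nose the parallel transport of $\kappa(\CE)$ and hence coincides with (\ref{coh}). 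The openness-of-the-K\"ahler-cone step in an analytic neighborhood is the idea your argument is missing.
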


\begin{proof}
It suffices to show that $\mathfrak{T}^\natural_g$ dominate an analytic neighborhood of $Q \in \CX^\natural_g$. Equivalently, we consider the moduli point $P=[(S,H)] \in \CX_g$, and let $(S_t, H_t)$ be a polarized $K3$ surface representing a very general point in an analytic neighborhood $\mathbf{\Delta}$ of $P \in \CX_g$; it suffices to show that there is a $\theta_v$-twisted slope stable vector bundle on $S^{[n]}_t \times M'_{S_t,v}$ with respect to the polarization $\CA$ whose kappa class coincides with (\ref{coh}). Here the polarization $\CA$, the class $\theta_v$, and the Mukai vector $v$ are all defined relatively over $\CX^\circ_g$; therefore they are all defined for the nearby polarized $K3$ surface $(S_t, H_t)$.

Recall from \cite{Markman} that the Fourier--Mukai transform (\ref{Zhang_FM2}) induces a Hodge isometry
\[
\phi: H^2(S^{[n]}, \BQ) \to H^2(M'_{S,v}, \BQ)
\]
which sends a K\"ahler class on $S^{[n]}$ to a K\"ahler class on $M'_{S,v}$. Indeed, this condition concerning the K\"ahler classes is exactly the reason we have to consider the birational model $M'_{S,v}$ of~$M_{S,v}$. We consider the parallel transport along a path connecting the nearby point $[(S_t, H_t)]$ to~$[(S,H)]$ in $\mathbf{\Delta} \subset \CX_g$:
\begin{equation}\label{PT6}
\mathrm{PT}_{S^{[n]}}: H^2(S_t^{[n]}, \BQ) \to H^2(S^{[n]}, \BQ), \quad \mathrm{PT}_M: H^2(M'_{S_t,v}, \BQ) \to H^2(M'_{S,v}, \BQ).
\end{equation}
If the composition
\begin{equation}\label{Kahler}
\mathrm{PT}^{-1}_M\circ\varphi\circ \mathrm{PT}_{S^{[n]}}:  H^2(S_t^{[n]}, \BQ) \to H^2(M'_{S_t,v}, \BQ)
\end{equation}
also sends a K\"ahler class to a K\"ahler class, then Zhang's construction \cite{Zhang} shows that there is a $\theta_v$-twisted vector bundle $\CE_t$ on $S^{[n]}_t \times M'_{S_t, v}$ which is connected to $(\CE ,\theta_v)$ on $S^{[n]} \times M'_{S,v}$ by a twistor path in the sense of Markman  \cite{Markman}, and it induces a derived equivalence
\[
\Phi_{\CE_t, \theta_v}: D^b(S_t^{[n]}) \xrightarrow{\simeq} D^b(M'_{S,v}, \theta_v).
\]
Hence $\kappa(\CE_t) \in H^*(S_t^{[n]}\times M'_{S_t,v}, \BQ)$ is sent to $\kappa(\CE) \in H^*(S^{[n]} \times M'_{S,v}, \BQ)$ via a parallel transport; in particular the kappa class of $\CE_t$ coincides with the class (\ref{coh}). Therefore $\CE_t$ gives the desired bundle over $S^{[n]}_t\times \mathfrak{M}'_{S_t,v}$.

By the discussion above, to complete the proof, we need to show that there is an analytic neighborhood $P \in \mathbf{\Delta}$ whose general point $[(S_t, H_t)]$ satisfies that (\ref{Kahler}) sends a K\"ahler class to a K\"ahler class. This can be achieved since (\ref{PT6}) do not change the K\"ahler cones for a general point $[(S_t, H_t)]$ in a sufficiently small neighborhood of $P = [(S, H)]$.
\end{proof}

\begin{proof}[Proof of Theorem~\ref{thm_main2}]
Combining Lemma \ref{lem2.6} and Proposition \ref{prop2.7}, we can find an irreducible component of the moduli space $\CT^\natural_g$ dominating $\CX^\natural_g$. By taking sub-spaces and resolving the singularities, there is an irreducible nonsingular algebraic space $\CR^\natural_g$ over $\CT^\natural_g$ which satisfies that the composition
\[
\CR^\natural_g \to \CT_g^\natural \to \CX^\natural_g
\]
is generically finite. The pullback of the $\mu_{2n-2}$-gerbe $\mathfrak{T}^\natural_g \to \CT^\natural_g$ yields a $\mu_{2n-2}$-gerbe
\[
\mathfrak{R}^\natural_g \to \CR^\natural_g,
\]
and there is a universal vector bundle 
\[
\CE^\mathrm{univ} \in \mathrm{Coh}\left( \CS^{[n]}_g \times_{\CX^\natural_g} \mathfrak{M}'_{g,v} \times_{\CX^\natural_g} \mathfrak{R}^\natural_g \right)
\]
pulled back from the universal vector bundle over the moduli stack $\mathfrak{T}^\natural_g$.

We consider the composition
\[
\epsilon: \CS^{[n]}_g \times_{\CX^\natural_g} \mathfrak{M}'_{g,v} \times_{\CX^\natural_g} \mathfrak{R}^\natural_g \to  \CS^{[n]}_g \times_{\CX_g} {\CM}'_{g,v}
\]
of the natural morphisms
\[
 \CS^{[n]}_g \times_{\CX^\natural_g} \mathfrak{M}'_{g,v} \times_{\CX^\natural_g} \mathfrak{R}^\natural_g \to  \CS^{[n]}_g \times_{\CX^\natural_g} \mathfrak{M}'_{g,v}
\]
and 
\[
\CS^{[n]}_g \times_{\CX^\natural_g} \mathfrak{M}'_{g,v} \to \CS^{[n]}_g \times_{\CX^\natural_g} {\CM}'_{g,v} \to \CS^{[n]}_g \times_{\CX^\circ_g} {\CM}'_{g,v}.
\]
The desired class of Theorem \ref{thm_main2} is given by averaging the kappa class of the bundle $\CE^{\mathrm{univ}}$,
\[
\mathfrak{K}^{\mathrm{univ}} := \frac{1}{\mathrm{deg}(\epsilon)} \epsilon_*\kappa( \CE^{\mathrm{univ}}) \in \mathrm{CH}^*\left(\CS_g^{[n]}\times_{\CX_g^\circ} \CM'_{g,v}\right),
\]
which is well-defined since the degree of $\epsilon$ is finite. This completes the proof.
\end{proof}

\begin{rmk}
 Since there are several spaces involved in the proof, we summarize the reasons of introducing them for the reader's convenience. The original purpose concerns ``spreading out'' a cycle on $S^{[n]} \times M_{S,v}$ over a Zariski open subset of $\CX_g$. We first switch to a birational model~$M'_{S,v}$ so that the cycle is induced by a twisted vector bundle; vector bundles are easier to be deformed. We take the open subset $\CX^\circ_g \subset \CX_g$ to ``spread out'' the geometric fiber~$S^{[n]} \times M'_{S,v}$. Next, we switch to working over $\CX^\natural_g$ in order to lift the gerbe of each fiber to a global gerbe. Then we use Zhang's construction \cite{Zhang} to show that a certain relative moduli of stable bundles on the gerbe dominates the base $\CX^\natural_g$, which constructs the desired cycle class. Note that by our construction the restriction of $\mathfrak{K}^\mathrm{univ}$ over the moduli point $P=[(S,H)] \in \CX_g$ may not exactly recover $\kappa(\CE) \in \mathrm{CH}^*(S^{[n]} \times M'_{S,v})$. Nevertheless we know from the monodromy invariance (Proposition \ref{prop2.3}) that they are matched as cohomology classes, which is enough for our purpose. 
\end{rmk}

\section{Franchetta properties and the proof of Theorem \ref{thm0.7}} \label{New_Sec4}

We complete the proof of Theorem \ref{thm0.7} by applying the Fourier transform package of Section~\ref{sec1} to the universal kappa class given by Theorem \ref{thm_main2}.

\subsection{Franchetta properties}
Assume $g\geq 2$. We consider the universal family $\CS_g \to \CX_g$ over the moduli stack of $K3$ surfaces of genus $g$. Recall the \emph{generically defined cycles}
\begin{equation*}
\mathrm{GDCH}^*(S^n):= \mathrm{Im}\left( \mathrm{CH}^*({\CS^n_g}_{/\CX_g}) \to \mathrm{CH}^*(S^n) \right)
\end{equation*}
on the $n$-th product of any fiber $S\subset \CS_g$. In fact, by a standard spreading out argument, a class in $\mathrm{CH}^*(S^n)$ is generically defined if it is given by the restriction of a class on the $n$-th relative product over a Zariski dense open subset $\CU \subset \CX_g$. The $n$-th Franchetta property holds for $K3$ surfaces of genus $g$ if the cycle class map is injective restricting to the subgroup 
\[
\mathrm{GDCH}^*(S^n) \subset \mathrm{CH}^*(S^n).
\]
The following conjecture, which generalizes O'Grady's generalized Franchetta conjecture \cite{OG}, was studied in \cite{FLV, FLV2} and stated explicitly in \cite[Conjecture 1.5]{LV}. 

\begin{conj}[Generalized Franchetta conjecture]\label{conj3.1}
For any $g\geq 2$ and $n\geq 1$, the $n$-th Franchetta property holds for $K3$ surfaces of genus $g$.
\end{conj}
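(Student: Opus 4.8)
The plan is to reduce the conjecture to a finite-dimensional linear-algebra statement about a ``tautological'' subring of $\mathrm{CH}^*(S^n)$, and then to attack that statement by a combination of explicit projective geometry (for the base of an induction on $n$) and motivic bootstrapping (to pass from small $n$ to arbitrary $n$). The key structural inputs are that a K3 surface carries a canonical zero-cycle $\mathfrak{o}_S \in \mathrm{CH}^2(S)$ (Beauville--Voisin) and a multiplicative Chow--K\"unneth decomposition, so that the Chow rings of $S$ and its powers are unusually rigid, and that the monodromy of polarized K3 surfaces is of finite index in $O(H^2(S,\BZ))$, hence Zariski-dense in the orthogonal group.

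First I would introduce the tautological subring $R^*(S^n) \subseteq \mathrm{CH}^*(S^n)$ generated by the pullbacks $h_i := \mathrm{pr}_i^* H$, the pullbacks $o_i := \mathrm{pr}_i^* \mathfrak{o}_S$, and the diagonal classes $\Delta_{ij} := \mathrm{pr}_{ij}^* [\Delta_S]$, and reduce the $n$-th Franchetta property to two assertions: (i) every generically defined cycle on $S^n$ lies in $R^*(S^n)$; and (ii) the cycle class map $R^*(S^n) \to H^*(S^n, \BQ)$ is injective. For (i), the cohomological shadow is classical: by the Zariski-density of the monodromy in $O(H^2(S))$ together with the first fundamental theorem of invariant theory for the orthogonal group, the monodromy-invariant part of $H^*(S^n, \BQ)$ is spanned by products of the $h_i$, $o_i$, and $\Delta_{ij}$, so every generically defined \emph{cohomology} class is tautological; promoting this to Chow groups is itself part of the content, and I would argue it via a spreading-out and boundedness argument combined with the structure of the Chow groups of K3 surfaces over function fields. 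For (ii), the candidate relations are exactly the Beauville--Voisin relations $\Delta_{ij}^2 = \ldots$, $o_i \cdot h_i = 0$, $h_i^3 = (H^2)\, o_i \cdot h_i$ and their pullbacks, all of which already hold in $\mathrm{CH}^*$; one must show these generate \emph{all} relations that hold in cohomology.

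To prove (ii) I would induct on $n$. The case $n = 1$ is O'Grady's generalized Franchetta conjecture, which I would establish for each $g$ for which a generic genus-$g$ K3 surface $S$ arises as a complete intersection, or as the zero locus of a regular section of a globally generated bundle, in a rational homogeneous variety $G/P$: then $\mathfrak{o}_S$ is the restriction of a tautological cycle on $G/P$ and $\mathrm{CH}^*(S)$ in the generically defined range is controlled by $\mathrm{CH}^*(G/P)$, giving the injectivity. For $n \ge 2$, granting the cases $n = 1$ and $n = 2$, I would use the multiplicative Chow--K\"unneth decomposition of $S$: it induces a multiplicative decomposition of $h(S^n)$, and by pushing and pulling along partial diagonals and projections one reduces a relation in $R^*(S^n)$ to relations in $R^*(S^2)$ that are already known to lift, provided one checks that $R^*(S^n)$ is ``freely built'' out of copies of $R^*(S^2)$ modulo those relations.

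The genuinely hard step is (ii), and within it already the base case $n = 1$: O'Grady's conjecture is open for infinitely many $g$ precisely because not every polarized K3 surface admits a sufficiently rigid projective model, so the ``restrict a cycle from an ambient homogeneous variety'' mechanism has no input there, and a fundamentally new idea — motivic, deformation-theoretic, or arithmetic — would be needed to reach all genera. The $n \ge 2$ bootstrapping is also not formal: it demands ruling out ``new'' tautological relations in the cohomology of $S^n$ beyond those forced by $S^2$, which relies on subtle multiplicative features of $\mathrm{CH}^*(S)$ rather than on any purely categorical manipulation. Consequently a complete unconditional proof is not in sight; what this plan actually delivers is the conjecture, for all $n$ simultaneously, in the (currently finite) range of $g$ in which explicit models are available — which is the present state of the art.
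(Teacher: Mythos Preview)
The statement you are attempting to prove is \emph{Conjecture}~\ref{conj3.1} in the paper, not a theorem: the paper offers no proof and treats it as an open problem, using it only as a hypothesis in Theorem~\ref{thm0.7}. So there is no ``paper's own proof'' to compare against; the paper simply records the conjecture (attributed to \cite{OG,FLV,FLV2,LV}) and cites the partial low-genus evidence \cite{PSY,FL,Lu}.

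Your proposal is not a proof but a research programme, and you say as much in your last paragraph. The two-step reduction you outline---(i) show $\mathrm{GDCH}^*(S^n)$ coincides with the tautological ring $R^*(S^n)$ generated by $h_i, o_i, \Delta_{ij}$, and (ii) show the cycle class map is injective on $R^*(S^n)$---is exactly the strategy of \cite{FLV,FLV2}, and your inductive bootstrapping idea from $n=2$ is close to what is done there. But step (i) is not just ``part of the content'': it is already open in general, and your sketch (``spreading-out and boundedness argument combined with the structure of the Chow groups of K3 surfaces over function fields'') is not a mechanism that actually forces a generically defined Chow class into $R^*(S^n)$. The monodromy/invariant-theory argument controls only cohomology classes, and there is no known way to promote that to Chow without already assuming something like the Franchetta property. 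Likewise, your base case $n=1$ relies on Mukai-type models, which exist only for finitely many $g$; you correctly identify this as the obstruction, but that means the plan as written proves nothing beyond the cases already in \cite{PSY,FL,Lu,FLV,FLV2}. In short: reasonable survey of the known approach, but no new idea, and the conjecture remains open---which is precisely why the paper states it as a conjecture.
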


We can consider similarly generically defined cycles on (products of) nonsingular moduli spaces of stable sheaves on a $K3$ surface, which are defined relatively over a Zariski open subset of $\CX_g$. The following result given by \cite[Theorem 4.1]{FLV2} reduces the Franchetta properties for moduli spaces of stable sheaves to the Franchetta properties for products of $K3$ surfaces.

\begin{thm}[\cite{FLV2}]\label{thm3.2}
Assume that Conjecture \ref{conj3.1} holds for $g,n$. Let $(S, H)$ be a $K3$ surface of genus~$g$ with $\mathrm{Pic}(S) = \BZ H$. Then the cycle class map is injective when restricted to the~subgroup 
\[
\mathrm{GDCH}^*(M_1 \times M_2 \times \cdots \times M_t) \subset \mathrm{CH}^*(M_1 \times M_2 \times \cdots \times M_t)
\]
if $\dim M_1 + \dim M_2 + \cdots \dim M_t \leq 2n$. Here each $M_i$ is a hyper-K\"ahler variety birational to a moduli space of $H$-stable sheaves on $S$ with respect to a primitive Mukai vector $v_i$.
\end{thm}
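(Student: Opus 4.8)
The plan is to deduce the statement from the $n$-th Franchetta property for $S$ itself (Conjecture~\ref{conj3.1}, assumed here for $g$ and $n$), by showing that each $h(M_i)$ is, \emph{generically over $\CX_g$}, a direct summand of a finite direct sum of Tate twists of motives $h(S^{k})$ with $k \le \tfrac12\dim M_i$, and then multiplying these decompositions together. Using the birational invariance of Chow motives~\cite{Riess} we may assume each $M_i = M_{S,v_i}$ is an actual moduli space.

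For a single $M = M_{S,v}$ with $v^2 = 2m-2$ (so $\dim M = 2m$), the (possibly twisted, possibly only quasi-)universal sheaf $\CE$ on $M \times S$ has a well-defined kappa class $\kappa(\CE)$, whose K\"unneth components give correspondences between $h(M)$ and the motives $h(S^k)$. Invoking the motivic decomposition of moduli of sheaves on a $K3$ surface due to B\"ulles (for the Hilbert scheme $S^{[m]}$ this is the classical computation of de Cataldo--Migliorini, and one may reduce to that case via Beckmann's theorem~\ref{thm_Beck}), one obtains idempotent correspondences exhibiting $h(M)$ as a direct summand of $\bigoplus_j h(S^{k_j})(\ell_j)$ with each $k_j \le m$. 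The essential point is that all of these correspondences are \emph{generically defined}: the universal sheaf, hence $\kappa(\CE)$, spreads out over a Zariski-dense open substack of $\CX_g$ — by exactly the package built in Section~\ref{sec2} (relative moduli of twisted sheaves, a level cover to globalize the associated $\mu$-gerbe, and averaging of kappa classes along a generically finite cover) — so the idempotents and the maps $h(M) \hookrightarrow \bigoplus_j h(S^{k_j})(\ell_j) \twoheadrightarrow h(M)$ realizing the splitting have representatives in $\mathrm{GDCH}^*$. Here the hypothesis $\mathrm{Pic}(S) = \BZ H$ is precisely what makes the relevant classes monodromy invariant, as in Lemma~\ref{lem2.2}.

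Tensoring these decompositions over $i = 1, \dots, t$ and using $h(S^a) \otimes h(S^b) \simeq h(S^{a+b})$, the motive $h(M_1 \times \cdots \times M_t)$ becomes a direct summand, via generically defined correspondences, of $\bigoplus h(S^{N})(\ell)$ with $N = k_{1,j_1} + \cdots + k_{t,j_t} \le m_1 + \cdots + m_t = \tfrac12(\dim M_1 + \cdots + \dim M_t) \le n$. Hence $\mathrm{GDCH}^*(M_1 \times \cdots \times M_t)$ is the image, under a generically defined correspondence, of a direct sum of copies of the groups $\mathrm{GDCH}^*(S^N)$ with $N \le n$, compatibly with the cycle class maps. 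Since the $n$-th Franchetta property implies the $N$-th for every $N \le n$ (pad a generically defined cycle on $S^N$ with copies of the generically defined Beauville--Voisin zero-cycle on the remaining $n-N$ factors and push forward along the projection $S^n \to S^N$), the cycle class map is injective on each $\mathrm{GDCH}^*(S^N)$, hence on the summand, hence on $\mathrm{GDCH}^*(M_1 \times \cdots \times M_t)$.

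The main obstacle I expect is not the motivic bookkeeping but establishing that the splitting is \emph{genuinely} generically defined: for general $v_i$ the universal object is only twisted and, in the family over $\CX_g$, is a priori constructed by transcendental means, so one must spread it out over an open of $\CX_g$, pass to a level cover to globalize the relevant $\mu$-gerbe, and exhibit a relative moduli space of such bundles dominating the base — precisely the work carried out in Section~\ref{sec2} for the particular Mukai vectors relevant there, now needed uniformly for all of $v_1, \dots, v_t$. A secondary delicate point is the sharp bound $k_j \le \tfrac12\dim M_{S,v}$ in the motivic decomposition of $h(M_{S,v})$: this is exactly what forces the hypothesis $\dim M_1 + \cdots + \dim M_t \le 2n$ rather than something weaker, and it rests on the de Cataldo--Migliorini computation for Hilbert schemes together with birational invariance, or on B\"ulles's refined estimate, for general $v$.
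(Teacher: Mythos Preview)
Your overall strategy --- realize each $h(M_i)$ as a direct summand of $\bigoplus_j h(S^{k_j})(\ell_j)$ with $k_j \le \tfrac12\dim M_i$ via B\"ulles-type correspondences, check these are generically defined, tensor, and reduce to Conjecture~\ref{conj3.1} for $S^n$ --- is correct and is essentially the argument of \cite{FLV2}. The paper does not give its own proof; it cites \cite[Theorem~4.1]{FLV2} and appends a remark on birational models.

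Where you go astray is in diagnosing the difficulty. The correspondences in B\"ulles's decomposition are built from Chern classes of the (quasi-)universal sheaf on $M_{S,v} \times S$ --- \emph{not} on $S^{[n]} \times M'_{S,v}$. That universal sheaf is the direct output of the algebraic moduli construction, which is performed relatively over an open of $\CX_g$ from the outset; its Chern and kappa classes are tautological and generically defined by construction, with no transcendental input. The machinery of Section~\ref{sec2} addresses an entirely different and far harder object: Markman's projectively hyperholomorphic bundle on $S^{[n]} \times M'_{S,v}$, built by twistor deformation with no a priori algebraic description. Invoking that package here is a substantial overcomplication and conflates two unrelated constructions. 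Similarly, the detour through Theorem~\ref{thm_Beck} is unnecessary: B\"ulles applies directly to any moduli of stable sheaves, not only Hilbert schemes.

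A second, genuinely delicate point: your reduction to actual moduli spaces via \cite{Riess} is not immediately valid for \emph{generically defined} cycles, since the correspondence furnished by \cite{Riess} is not obviously generically defined. The paper's Remark following Theorem~\ref{thm3.2} handles this differently: by \cite{BM} every birational model of $M_{S,v}$ is itself a moduli space of Bridgeland-stable objects, and by \cite{Six} such moduli are constructed relatively over $\CX_g$, so the B\"ulles-style argument applies to the birational model directly without invoking \cite{Riess} at all.
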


\begin{rmk}
   The proof of \cite[Theorem 4.1]{FLV2} only treats the the case where $M_i$ is a moduli space of stable sheaves. However, by \cite{BM} any hyper-K\"ahler birational model of the moduli of stable sheaves is given by the moduli of stable objects with respect to a Bridgeland stability condition, and the proof works identically in this setting; see the proof of \cite[Theorem~3.1]{FLV2}. 
\end{rmk}

\subsection{Generically defined Fourier transforms}

We fix $(S,H)$ to be a $K3$ surface of genus~$g$ with $\mathrm{Pic}(S) = \BZ H$. Under the notation of Section \ref{sec2.1}, the pair $(S,H)$ is represented by a moduli point $P:= [(S,H)] \in \CX^\circ_g$.

Recall the hyper-K\"ahler varieties $M_{S,v}, M'_{S,v}, S^{[n]}$ with 
\[
\dim M_{S,v} = \dim M'_{S,v} = 2n,
\]
and the Fourier--Mukai transform (\ref{Zhang_FM2}). We define the generically defined kappa class
\begin{equation}\label{GD_kappa}
\kappa_v^{\mathrm{GD}} \in \mathrm{GDCH}^*(S^{[n]} \times M'_{S,v})
\end{equation}
as the restriction of the class $\mathfrak{K}^{\mathrm{univ}}$ in Theorem \ref{thm_main2} to the fiber over $P \in \CX^\circ_g$. We have
\begin{equation}\label{coh_match}
\kappa(\CE) = \kappa_v^{\mathrm{GD}} \in H^*(S^{[n]} \times M'_{S,v}, \BQ).
\end{equation}
Following Section \ref{sec1.2}, the class (\ref{GD_kappa}) induces the generically defined Fourier transforms
\begin{equation}\label{Fourier_GD}
\mathfrak{F}_v^{\mathrm{GD}} \in \mathrm{GDCH}^*(S^{[n]} \times M'_{S,v}), \quad \mathfrak{F}_v^{\mathrm{GD,-1}} \in  \mathrm{GDCH}^*(M'_{S,v} \times S^{[n]})
\end{equation}
where we replace $\kappa(\CE)$ by $\kappa_v^{\mathrm{GD}}$. Similarly, we define the normalizations $\widetilde{\mathfrak{F}}_v^{\mathrm{GD}}, {{\widetilde{\mathfrak{F}}_v}^{\mathrm{GD}, -1} }$ and the convolution class
\[
\mathfrak{C}^{\mathrm{GD}}_v \in \mathrm{GDCH}^*(S^{[n]} \times S^{[n]} \times S^{[n]} )
\]
following Section \ref{sec1.3}.

The following proposition shows that Conjectures \ref{conj1.1} and \ref{conj1.4} for the Fourier transform package $(\mathfrak{F}^{\mathrm{GD}}_v, \mathfrak{F}^{\mathrm{GD},-1}_v, \mathfrak{C}^{\mathrm{GD}}_v)$ is a consequence of certain special cases of Conjecture \ref{conj3.1}.

\begin{prop}\label{prop3.4}
We have the following.
\begin{enumerate}
    \item[(a)] If Conjecture \ref{conj3.1} holds for $g$ and $2n$, then we have the Fourier vanishing
    \begin{gather*}
    \mathfrak{F}_{v,2i + 1}^{\mathrm{GD}} = 0, \quad \mathfrak{F}_{v,2j + 1}^{\mathrm{GD},-1} =0 , \quad 0 \leq i, j < 2n,\\
    \mathfrak{F}_{v,2i}^{\mathrm{GD}} \circ \mathfrak{F}_{v,2j}^{\mathrm{GD},-1} =0, \quad   \mathfrak{F}_{v,2i}^{\mathrm{GD},-1} \circ \mathfrak{F}_{v,2j}^{\mathrm{GD}} =0, \quad 0 \leq i, j \leq 2n, \quad i+j \neq 2n. 
    \end{gather*}
    \item[(b)] If Conjecture \ref{conj3.1} holds for $g$ and $3n$, then the convolution class is of pure degree
    \[
    \mathfrak{C}^{\mathrm{GD}}_v \in \mathrm{CH}^{2n}( S^{[n]} \times S^{[n]} \times S^{[n]} )
    \]
    and is independent of the choice of the primitive Mukai vector $v = (r, mH, s)$ as long as $v^2=2n-2$.
\end{enumerate}
\end{prop}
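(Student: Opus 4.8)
\emph{Strategy.} The plan is to deduce each vanishing or purity assertion from its already-established cohomological counterpart (Proposition \ref{prop1.8}, due to Markman) by invoking the Franchetta-type injectivity of the cycle class map on generically defined cycles of moduli spaces (Theorem \ref{thm3.2}). The one preliminary point is to match the generically defined cycles of Section \ref{sec2} with the Fourier-theoretic cycles of Section \ref{sec1} \emph{in cohomology}: by \eqref{coh_match} one has $\kappa_v^{\mathrm{GD}}=\kappa(\CE)$ in $H^*(S^{[n]}\times M'_{S,v},\BQ)$, and since $\sqrt{\mathrm{td}}$ is a polynomial in the Chern classes of the (relatively defined) tangent bundles, the transforms $\mathfrak{F}^{\mathrm{GD}}_v,\mathfrak{F}^{\mathrm{GD},-1}_v$, their normalizations, all of their graded pieces and compositions, and the convolution class $\mathfrak{C}^{\mathrm{GD}}_v$ remain generically defined and agree in cohomology with the classes $\mathfrak{F},\mathfrak{F}^{-1},\widetilde{\mathfrak{F}},\mathfrak{C}$ of Section \ref{sec1.3} attached to the equivalence \eqref{Zhang_FM2}. (For $\mathfrak{F}^{\mathrm{GD},-1}_v$ one may simply take the transpose of $\mathfrak{F}^{\mathrm{GD}}_v$, using $\mathfrak{F}^{-1}={}^t\mathfrak{F}$ from \cite[Corollary 4.2]{Markman}; transposes and compositions of generically defined cycles, as well as their graded pieces, are again generically defined.)

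\emph{Part (a).} I would argue as follows. By Proposition \ref{prop1.8}(a) the cycles $\mathfrak{F}_{2i+1}$, $\mathfrak{F}^{-1}_{2j+1}$ for $0\le i,j<2n$, together with $\mathfrak{F}_{2i}\circ\mathfrak{F}^{-1}_{2j}$ and $\mathfrak{F}^{-1}_{2i}\circ\mathfrak{F}_{2j}$ for $i+j\neq 2n$, all vanish in cohomology; by the comparison above the same holds for $\mathfrak{F}^{\mathrm{GD}}_{v,2i+1}$, $\mathfrak{F}^{\mathrm{GD},-1}_{v,2j+1}$, $\mathfrak{F}^{\mathrm{GD}}_{v,2i}\circ\mathfrak{F}^{\mathrm{GD},-1}_{v,2j}$, $\mathfrak{F}^{\mathrm{GD},-1}_{v,2i}\circ\mathfrak{F}^{\mathrm{GD}}_{v,2j}$. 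Each of these lives on a product of two hyper-K\"ahler varieties, each of which is $S^{[n]}$ (a moduli space of stable sheaves on $S$ for the primitive Mukai vector $(1,0,1-n)$, $v^2=2n-2$) or $M'_{S,v}$ (birational to $M_{S,v}$), of total dimension $2n+2n=4n$. Since Conjecture \ref{conj3.1} is assumed for $g$ and $2n$, Theorem \ref{thm3.2} gives that the cycle class map is injective on $\mathrm{GDCH}^*$ of such a product; hence all these cycles vanish in $\mathrm{CH}^*$, which is (a).

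\emph{Part (b).} For purity, Proposition \ref{prop1.8}(b) shows the cohomology class of $\mathfrak{C}_v$ is concentrated in $H^{4n}((S^{[n]})^3,\BQ)$ (codimension $2n$) and is intrinsic to $S^{[n]}$, hence independent of the auxiliary choice $X=M'_{S,v}$, that is, of $v$. Writing $\mathfrak{C}^{\mathrm{GD}}_v=\sum_d(\mathfrak{C}^{\mathrm{GD}}_v)_d$ with $(\mathfrak{C}^{\mathrm{GD}}_v)_d\in\mathrm{CH}^d((S^{[n]})^3)$, every piece with $d\neq 2n$ vanishes in cohomology and is a generically defined cycle on $(S^{[n]})^3$, a product of three hyper-K\"ahler varieties of total dimension $6n$; since Conjecture \ref{conj3.1} is assumed for $g$ and $3n$, Theorem \ref{thm3.2} forces $(\mathfrak{C}^{\mathrm{GD}}_v)_d=0$ in $\mathrm{CH}^d$, so $\mathfrak{C}^{\mathrm{GD}}_v\in\mathrm{CH}^{2n}((S^{[n]})^3)$. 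For independence, given primitive $v,v'$ with $v^2={v'}^2=2n-2$, the classes $\mathfrak{C}^{\mathrm{GD}}_v$ and $\mathfrak{C}^{\mathrm{GD}}_{v'}$ are generically defined cycles of codimension $2n$ on $(S^{[n]})^3$ which agree in cohomology by the previous sentence; one final application of Theorem \ref{thm3.2} under Conjecture \ref{conj3.1} for $g$ and $3n$ shows $\mathfrak{C}^{\mathrm{GD}}_v=\mathfrak{C}^{\mathrm{GD}}_{v'}$ in $\mathrm{CH}^{2n}((S^{[n]})^3)$.

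\emph{Main difficulty.} There is no single hard step; the work lies in the bookkeeping of the two paragraphs above. The delicate points are, first, to be sure the relevant generically defined cycles genuinely match Markman's Fourier-theoretic cycles in cohomology — this is where \eqref{coh_match} and $\mathfrak{F}^{-1}={}^t\mathfrak{F}$ enter — and, second, that the dimension counts line up with the Franchetta level being assumed: $2\cdot(2n)=4n$ for the pairwise products in (a) versus $2\cdot(3n)=6n$ for the triple product in (b), which is exactly why (a) needs Conjecture \ref{conj3.1} for $2n$ while (b) needs it for $3n$. One should also record that generically defined cycles form a subring stable under transpose, composition of correspondences, and extraction of graded pieces.
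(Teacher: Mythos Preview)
Your proposal is correct and follows exactly the same approach as the paper: the statements hold in cohomology by \eqref{coh_match} and Proposition~\ref{prop1.8}, and since all the relevant classes are generically defined, the lift to Chow follows from Theorem~\ref{thm3.2} under the assumed Franchetta properties. You have simply spelled out in more detail the bookkeeping (dimension counts, closure of $\mathrm{GDCH}$ under transpose/composition/graded pieces, and the use of $\mathfrak{F}^{-1}={}^t\mathfrak{F}$ for the inverse transform) that the paper leaves implicit in its two-sentence proof.
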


\begin{proof}
   Both statements above are known in cohomology by (\ref{coh_match}) and Proposition \ref{prop1.8}. Therefore, they concern lifting certain cohomological relations to Chow; since all the relevant classes are generically defined by construction, the proposition follows from Theorem \ref{thm3.2} 
\end{proof}

Assuming Conjecture \ref{conj3.1} for $g$ and $2n$, Proposition \ref{prop3.4}(a) (together with Proposition \ref{prop1.3}) yields generically defined Fourier-stable Chow--K\"unneth decompositions
\begin{equation} \label{CK1}
h(M'_{S,v}) = \bigoplus_{k=0}^{2n} h_{2k}(M'_{S,v}), \quad
h(S^{[n]}) = \bigoplus_{i=0}^{2n} h_{2k}(S^{[n]})
\end{equation}
with projectors given by the classes (\ref{Fourier_GD}) as in (\ref{projectors}). By \cite[Proposition 1.6]{LV}, the existence and the uniqueness of generically defined Chow--K\"unneth decompositions are known for $M'_{S,v}$ and~$S^{[n]}$ if we assume the $2n$-th Franchetta property. Here the explicit forms of the projectors are needed to deduce the Fourier stability, which is crucial in the proof of Theorem~\ref{thm0.7}.

From now on, when we assume that Conjecture \ref{conj3.1} holds for $g$ and $2n$, we work with the (canonical) generically defined Chow--K\"unneth decompositions \eqref{CK1} without specifying the projectors.

Further assuming Conjecture \ref{conj3.1} for $g$ and $3n$, Proposition \ref{prop3.4}(b) (together with Proposition~\ref{prop1.6}) shows that the graded algebra object
\[
\bigoplus_{k=0}^{2n} h_{2k}(M'_{S,v}), \quad \cup: h_{2i}(M'_{S,v}) \times h_{2j}(M'_{S,v}) \to h_{2i+2j}(M'_{S,v})
\]
is isomorphic through $\widetilde{\FF}_v^{\mathrm{GD}}$ to the graded algebra object
\begin{gather*}
\bigoplus_{k=0}^{2n} h_{4n-2k}(S^{[n]})(2n-2k), \\
\ast: h_{4n-2i}(S^{[n]})(2n-2i) \times h_{4n-2j}(S^{[n]})(2n-2j) \to h_{4n-2i-2j}(S^{[n]})(2n-2i-2j)
\end{gather*}
with respect to the generically defined convolution class $\mathfrak{C}^{\mathrm{GD}}_v$.

Next, we consider a generically defined version of the Fourier autoduality in Section \ref{sec2.2} for the Hilbert scheme $S^{[n]}$. Any ample class $L \in  \mathrm{CH}^1(S^{[n]})$ of BBF norm $2d$ is obviously generically defined. Following \eqref{eq:defL} and \eqref{eq:defLambda}, we then define
\[
e_L^{\mathrm{GD}} \in \mathrm{GDCH}^{2n + 1}(S^{[n]} \times S^{[n]}), \quad f_L^{\mathrm{GD}} \in \mathrm{GDCH}^{2n - 1}(S^{[n]} \times S^{[n]}),
\]
the latter using the generically defined Fourier transforms $\mathfrak{F}_v^{\mathrm{GD}}, \mathfrak{F}_v^{\mathrm{GD}, -1}$. These in turn yield the generically defined Fourier self-transforms
\[
\FF_L^{\mathrm{GD}} := \exp(e_L^{\mathrm{GD}})\exp(-f_L^{\mathrm{GD}})\exp(e_L^{\mathrm{GD}}) \in \mathrm{GDCH}^*(S^{[n]} \times S^{[n]}), \quad \FF_L^{\mathrm{GD}, -1} \in\mathrm{GDCH}^*(S^{[n]} \times S^{[n]}),
\]
the normalizations $\widetilde{\FF}_L^{\mathrm{GD}}, \widetilde{\FF}_L^{\mathrm{GD}, -1}$, and the convolution class
\[
\mathfrak{C}_L^{\mathrm{GD}} \in \mathrm{GDCH}^*(S^{[n]} \times S^{[n]} \times S^{[n]}).
\]

The next proposition concerning $(\FF_L^{\mathrm{GD}}, \FF_L^{\mathrm{GD}, -1}, \mathfrak{C}_L^{\mathrm{GD}})$ is parallel to Proposition \ref{prop3.4}.

\begin{prop}\label{prop3.6}
    We have the following.
    \begin{enumerate}
        \item[(a)]  If Conjecture \ref{conj3.1} holds for $g$ and $2n$, then we have the Fourier vanishing
    \begin{gather*}
    \mathfrak{F}_{L,2i + 1}^{\mathrm{GD}} = 0, \quad \mathfrak{F}_{L,2j + 1}^{\mathrm{GD},-1} =0 , \quad 0 \leq i, j < 2n,\\
    \mathfrak{F}_{L,2i}^{\mathrm{GD}} \circ \mathfrak{F}_{L,2j}^{\mathrm{GD},-1} =0, \quad   \mathfrak{F}_{L,2i}^{\mathrm{GD},-1} \circ \mathfrak{F}_{L,2j}^{\mathrm{GD}} =0, \quad 0 \leq i, j \leq 2n, \quad i+j \neq 2n. 
    \end{gather*}
        \item[(b)] If Conjecture \ref{conj3.1} holds for $g$ and $3n$, then we have 
        \begin{equation*}
        \mathfrak{C}^{\mathrm{GD}}_L = \mathfrak{C}^{\mathrm{GD}}_v \in \mathrm{CH}^{2n}(S^{[n]} \times S^{[n]} \times S^{[n]}).
        \end{equation*}
    \end{enumerate}
\end{prop}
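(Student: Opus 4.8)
The plan is to mirror exactly the proof of Proposition \ref{prop3.4}, exploiting that all the relevant cycles in part (a) and part (b) are generically defined, and that the cohomological statements are already available. For part (a), observe that the vanishing statements $\mathfrak{F}_{L,2i+1}^{\mathrm{GD}} = 0$, $\mathfrak{F}_{L,2j+1}^{\mathrm{GD},-1} = 0$ and $\mathfrak{F}_{L,2i}^{\mathrm{GD}} \circ \mathfrak{F}_{L,2j}^{\mathrm{GD},-1} = 0$, $\mathfrak{F}_{L,2i}^{\mathrm{GD},-1} \circ \mathfrak{F}_{L,2j}^{\mathrm{GD}} = 0$ (for $i+j\neq 2n$) all hold in cohomology: indeed the Fourier self-transform $\mathfrak{F}_L$ is degree-reversing on $H^*(S^{[n]},\BQ)$ by the discussion in Section \ref{sec2.2}, and the Fourier vanishing for $\mathfrak{F}_L$ was established there as a formal consequence of degree-reversal together with the vanishing of odd cohomology of $K3^{[n]}$-type varieties. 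Each composite $\mathfrak{F}_{L,2i}^{\mathrm{GD}} \circ \mathfrak{F}_{L,2j}^{\mathrm{GD},-1}$ is a generically defined cycle on $S^{[n]} \times S^{[n]}$, and $\dim S^{[n]} + \dim S^{[n]} = 4n$, so Theorem \ref{thm3.2} (applied with $g$ and $2n$, and with $M_1 = M_2 = S^{[n]}$, which is the moduli space attached to the Mukai vector $(1,0,1-n)$) tells us the cycle class map is injective on $\mathrm{GDCH}^*(S^{[n]} \times S^{[n]})$. Since $e_L^{\mathrm{GD}}$, $f_L^{\mathrm{GD}}$, and hence $\mathfrak{F}_L^{\mathrm{GD}}$, $\mathfrak{F}_L^{\mathrm{GD},-1}$ are generically defined (the ample class $L$ and the Fourier transforms $\mathfrak{F}_v^{\mathrm{GD}}, \mathfrak{F}_v^{\mathrm{GD},-1}$ being generically defined), the vanishing in Chow follows from the vanishing in cohomology.

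For part (b), the identity $\mathfrak{C}_L^{\mathrm{GD}} = \mathfrak{C}_v^{\mathrm{GD}}$ is again first checked in cohomology. Cohomologically, $\mathfrak{C}_v$ is the convolution class of Section \ref{sec1.3}, which by Proposition \ref{prop1.8}(b) is intrinsic to $S^{[n]}$ (independent of the auxiliary variety $M'_{S,v}$ and of the bundle $\CE$, hence of $v$); and $\mathfrak{C}_L$ is the convolution class attached to the Fourier self-transform $\mathfrak{F}_L$, which by \cite[Lemma 10.4]{Markman} (as used in the proof of Proposition \ref{prop:ffauto}) also computes Markman's intrinsic convolution product on $H^*(S^{[n]},\BQ)$ after normalization. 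Therefore $\mathfrak{C}_L^{\mathrm{GD}} = \mathfrak{C}_v^{\mathrm{GD}}$ in $H^{4n}(S^{[n]}\times S^{[n]}\times S^{[n]},\BQ)$. Both classes are generically defined (one from $\mathfrak{K}^{\mathrm{univ}}$ via Theorem \ref{thm_main2}, the other from $e_L^{\mathrm{GD}}, f_L^{\mathrm{GD}}$), and live on the triple product $S^{[n]}\times S^{[n]}\times S^{[n]}$, of total dimension $6n$. Hence Theorem \ref{thm3.2}, applied with $g$ and $3n$ and with $M_1 = M_2 = M_3 = S^{[n]}$, upgrades the cohomological identity to an identity in $\mathrm{CH}^{2n}(S^{[n]}\times S^{[n]}\times S^{[n]})$; in particular $\mathfrak{C}_L^{\mathrm{GD}}$ is of pure degree $2n$, being equal to $\mathfrak{C}_v^{\mathrm{GD}}$ which is of pure degree $2n$ by Proposition \ref{prop3.4}(b).

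The only delicate point is bookkeeping: one must make sure that every cycle appearing in the statement is genuinely in the image of $\mathrm{CH}^*$ of a relative product over a Zariski open of $\CX_g$, so that Theorem \ref{thm3.2} applies. This is where the constructions of Section \ref{sec2}—especially Theorem \ref{thm_main2}, which produces the universal kappa class $\mathfrak{K}^{\mathrm{univ}}$—do all the work: the classes $\mathfrak{F}_v^{\mathrm{GD}}, \mathfrak{F}_v^{\mathrm{GD},-1}$ are generically defined by construction, $L$ is generically defined since $\mathrm{Pic}(S^{[n]})$ spreads out over $\CX_g^\circ$, and then $e_L^{\mathrm{GD}}, f_L^{\mathrm{GD}}, \mathfrak{F}_L^{\mathrm{GD}}, \mathfrak{F}_L^{\mathrm{GD},-1}$ and $\mathfrak{C}_L^{\mathrm{GD}}$ are built from these by polynomial operations (composition of correspondences, exponential series, diagonal pushforwards), all of which preserve generic definedness. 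I expect this verification to be routine rather than to present a genuine obstacle; no new geometric input beyond Sections \ref{sec1} and \ref{sec2} is needed.
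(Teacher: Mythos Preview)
Your proposal is correct and follows exactly the paper's approach: the paper's proof is a single sentence stating that, similarly to Proposition~\ref{prop3.4}, both statements are obtained by lifting relations of generically defined cycles from cohomology to Chow. Your write-up simply unpacks this sentence, making explicit the cohomological input from Section~\ref{sec2.2} and Proposition~\ref{prop1.8}(b), the generic definedness bookkeeping, and the invocation of Theorem~\ref{thm3.2}.
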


\begin{proof}
Similarly to the proof of Proposition \ref{prop3.4}, both statements are obtained by lifting relations of generically defined cycles from cohomology to Chow.
\end{proof}

Assuming Conjecture \ref{conj3.1} for $g$ and $2n$, Proposition \ref{prop3.6}(a) implies that the decomposition of $h(S^{[n]})$ in \eqref{CK1} is also stable under the generically defined Fourier self-transforms. Further assuming Conjecture \ref{conj3.1} for $g$ and $3n$, Proposition \ref{prop3.6}(b) shows that the graded algebra object
\[
\bigoplus_{k=0}^{2n} h_{2k}(S^{[n]}), \quad \cup: h_{2i}(S^{[n]}) \times h_{2j}(S^{[n]}) \to h_{2i+2j}(S^{[n]})
\]
is isomorphic through $\widetilde{\FF}_L^{\mathrm{GD}}$ to the graded algebra object
\begin{gather*}
\bigoplus_{k=0}^{2n} h_{4n-2k}(S^{[n]})(2n-2k), \\
\ast: h_{4n-2i}(S^{[n]})(2n-2i) \times h_{4n-2j}(S^{[n]})(2n-2j) \to h_{4n-2i-2j}(S^{[n]})(2n-2i-2j)
\end{gather*}
with respect to the convolution class $\mathfrak{C}^{\mathrm{GD}}_L = \mathfrak{C}^{\mathrm{GD}}_v$.

\begin{rmk}
    Since we are working with the Hilbert scheme $S^{[n]}$, another approach is to use the derived autoequivalence of $S$ induced by the ideal sheaf $\CI_\Delta$ of the diagonal $\Delta \subset S \times S$, and conjugate the BKR correspondence to get a derived autoequivalence
    \[
    \Phi_{\CI^{[n]}_{\Delta}}: D^b(S^{[n]}) \xrightarrow{\simeq} D^b(S^{[n]}).
    \]
    The kappa class of the kernel $\CI^{[n]}_{\Delta}$ is an alternative source of a generically defined Fourier self-transform. We omit the details of this approach.
\end{rmk}

\subsection{Proof of Theorem \ref{thm0.7}}\label{sec3.4}

The proof is completely parallel to the proof of Theorem \ref{thm0.5} in Section \ref{sec2.3}, where it suffices to relate the motives of $S^{[n]}$ and~$M'_{S, v}$.

We first assume that Conjecture \ref{conj3.1} holds for $g$ and $2n$. Recall the generically defined Chow--K\"unneth decompositions \eqref{CK1}. We construct an isomorphism of Chow motives 
\begin{equation*}
h_{2k}(S^{[n]}) \simeq h_{2k}(M'_{S,v})
\end{equation*}
as follows.

By Propositions \ref{prop3.4}(a) and \ref{prop3.6}(a) (together with Proposition \ref{prop1.3}(b)), we obtain mutually inverse isomorphisms of Chow motives
\begin{gather*}
h_{4n-2k}(S^{[n]}) (2n-2k)\xrightleftharpoons[~~~\widetilde{\mathfrak{F}}^{\mathrm{GD},-1}_{v, 4n-2k}~~~]{~~~\widetilde{\mathfrak{F}}^{\mathrm{GD}}_{v, 2k}~~~} h_{2k}(M'_{S,v}),
\label{cup-conv1_new} \\
h_{4n-2k}(S^{[n]}) (2n-2k) \xrightleftharpoons[~~~\mathfrak{F}^{\mathrm{GD},-1}_{L, 4n-2k}~~~]{~~~\mathfrak{F}^{\mathrm{GD}}_{L, 2k}~~~} h_{2k}(S^{[n]}).\label{cup-conv2_new}
\end{gather*}
Their composition yields mutually inverse isomorphisms of Chow motives
\[
h_{2k}(S^{[n]}) \xrightleftharpoons[~~~\mathfrak{F}^{\mathrm{GD}}_{L,2k}\circ\widetilde{\mathfrak{F}}^{\mathrm{GD},-1}_{v, 4n-2k}~~~]{ ~~~\widetilde{\mathfrak{F}}_{v, 2k}^{\mathrm{GD}}\circ \mathfrak{F}^{\mathrm{GD},-1}_{L, 4n-2k}~~~} h_{2k}(M'_{S,v})
\]
which completes the proof of Theorem \ref{thm0.7}(a).

If we further assume that Conjecture \ref{conj3.1} holds for $g$ and $3n$, then the desired isomorphism respecting the cup-product is obtained as the composition of the isomorphism
\[
\left(\bigoplus_{k = 0}^{2n}h_{2k}(M'_{S, v}), \cup\right) \simeq \left(\bigoplus_{k = 0}^{2n}h_{4n-2k}(S^{[n]})(2n-2k), \ast\right)
\]
shown in Proposition \ref{prop3.4}(b), and the isomorphism
\[
\left(\bigoplus_{k = 0}^{2n}h_{2k}(S^{[n]}), \cup\right) \simeq \left(\bigoplus_{k = 0}^{2n}h_{4n-2k}(S^{[n]})(2n-2k), \ast\right)
\]
shown in Proposition \ref{prop3.6}(b). The proof of Theorem \ref{thm0.7}(b) is also complete. \qed

\end{document}